\newtheorem{thm}{Theorem}[section]
\newtheorem{cor}[thm]{Corollary}
\newtheorem{lem}[thm]{Lemma}
\newtheorem{prop}[thm]{Proposition}
\theoremstyle{definition}
\theoremstyle{remark}
\theoremstyle{conjecture}
\theoremstyle{example}
\numberwithin{equation}{section}
\begin{document}

\title{On Samelson products in $SU(n)$-and $Sp(n)$}
\author{Sajjad Mohammadi}

\date{}
\maketitle
\begin{abstract}
Let $a$ and $b$ be two positive integers such that $a, b < n$. We denote the inclusion $\Sigma \mathbb{C}P^a\rightarrow SU(n)$  by $\varepsilon_{a,n}$. Also, let $m$ and $n$ be two positive integers such that $m < n$. This article has two parts. In the first part, we will study the order of the Samelson product $\langle \varepsilon_{a,n}, \varepsilon_{b,n}\rangle$ where $a+b=n+k$, for $k \geq 0$. Also, we will give two applications. In the second part, we will study the order of the Samelson product $S^{4m-1}\wedge Q_{n-m+1}\rightarrow Sp(n)$, where $Q_{n-m+1}$ is the symplectic quasi-projective space of rank $n-m+1$.
\end{abstract}
\textbf{Keywords:} Samelson product; $SU(n)$; $Sp(n)$; homotopy commutativity.\\\\
$2010$ Mathematics Subject Classification:  $55Q15; 55P10$
\section{Introduction }\label{a0}
Let $Z$ be an $H$-group and $f_1\colon X \rightarrow Z$ and $f_2\colon Y \rightarrow Z$ two maps. Consider the map
$X \times Y \overset{c}\longrightarrow Z$ defined by the commutator
$(x,y)\mapsto f_1(x)f_2(y){f_1(x)}^{-1}{f_2(y)}^{-1}$. This is null homotopic when restricted to $X \vee Y$ so induces a map $\langle f_1, f_2\rangle \colon X \wedge Y \rightarrow Z$. Since the sequence $[X \vee Y, Z]\rightarrow [X \times Y, Z]\rightarrow [X \wedge Y, Z]$ splits, the homotopy class of $\langle f_1, f_2\rangle$ is uniquely determined by those of $f_1$ and $f_2$ and is called the Samelson product of $f_1$ and $f_2$. Now, suppose $Z=\Omega C$ and ${f_1}'\colon SX \rightarrow C$, ${f_2}'\colon SY \rightarrow C$. Let $f_1 \colon X \rightarrow Z$ and $f_2 \colon Y \rightarrow Z$ be the adjoints of ${f_1}'$ and ${f_2}'$, respectively. Then the adjoint of the Samelson product $\langle f_1, f_2\rangle$ is called the Whitehead product and written $[f_1, f_2]\colon S(X \wedge Y )\rightarrow C$.\par
Let $G$ be a simple compact connected Lie group. The calculation of Samelson products plays an important role in classifying the homotopy types of gauge groups of principal $G$-bundles. Also, they are fundamental in studying the homotopy nilpotency and the homotopy commutativity of Lie groups. Despite their topology being studied intensively for decades, surprisingly little is known about the behaviour of commutators on classical Lie groups. The role of the commutator is played by Samelson products. Focusing on the groups $Sp(n)$, in the 1960s Bott \cite{[B]} gave some information on the orders of Samelson products of maps representing the integral homotopy groups. In the 1980s McGibbon \cite{[M]} classified at what primes and for which $n$ the group $Sp(n)$ is homotopy commutative (and so has all of its Samelson products trivial). In the late 2000s, Hamanaka, Kaji and Kono \cite{[HKK]} considered the map $\epsilon_n\colon Q_n \rightarrow Sp(n)$, where $Q_n$ is James quasi-projective space, and has the property that $H_*(Sp(n)) \cong \Lambda({\tilde{H}}_*(Sp(n)))$. They determined the order of $\langle \epsilon_2, \epsilon_2\rangle$ at the prime three. In following some related work by them, Nagao \cite{[N]} showed that the Samelson product of $\epsilon_2$ with itself has order $280$, and determined the odd primary components of the orders of the Samelson products of $\epsilon_n$ with itself for $n\in \{3, 4, 5\}$. Moreover, Samelson products have been studied extensively for the classical groups. In \cite{[H1]}, Hamanaka studies the Samelson products of $U(n)$ localized at a prime $p$. In \cite{[O]}, Oshima determines the Samelson product
$$\pi_n(G_2)\times \pi_{11}(G_2)\rightarrow \pi_{n+11}(G_2), $$
for $n=3, 11$. Also in \cite{[KKKT]}, Kamiyama, Kishimoto, Kono and Tsukuda have studied the Samelson products of $SO(3)$. As we mention, there are few results on Samelson products in Special unitary groups. Since this subject is valuable and has many applications, in this article, we will study this subject.\par
Let $a$ and $b$ two positive integers such that $a, b < n$. We denote the inclusion $\Sigma \mathbb{C}P^a\rightarrow SU(n)$ by $\varepsilon_{a,n}$. The Samelson product $\Sigma \mathbb{C}P^a \wedge \Sigma \mathbb{C}P^b \rightarrow U(n)$ has been studied by Hamanaka and Kono in \cite{[H3]}, where $a+b=n-1$. In the first part of this article, we extend this work by studying the order of the Samelson product $\langle \varepsilon_{a,n}, \varepsilon_{b,n}\rangle \colon \Sigma \mathbb{C}P^a \wedge \Sigma \mathbb{C}P^b \rightarrow SU(n)$ where $a+b=n+k$, for $k\geq 0$. Let $d$ be the order of the Samelson product $\langle \varepsilon_{a,n}, \varepsilon_{b,n}\rangle$. We will prove the following theorems.
\begin{thm}\label{a1}
Let $a+b=n$. Localized at odd primes, the following hold:
\begin{displaymath}
d = \left\{ \begin{array}{ll}
3 & \textrm{\text{if} $a=1, b=2$ },\\
5\cdot 3 & \textrm{\text{if} $ a=1, b=3$ },\\
5\cdot 3 & \textrm {\text{if} $a=b=2$,\quad \text{or}\quad $a=2, b=3$}.
\end{array} \right.
\end{displaymath}
\end{thm}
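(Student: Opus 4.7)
The plan is to compute $d$ one odd prime at a time, matching an upper bound on the order of $\langle \varepsilon_{a,n}, \varepsilon_{b,n}\rangle$ with a lower bound and checking that they give the claimed value.

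For the upper bound I would exploit the cell structure of $\Sigma \mathbb{C}P^a \wedge \Sigma \mathbb{C}P^b$: it has cells in dimensions $2(i+j)+2$ for $1 \le i \le a$ and $1 \le j \le b$, with attaching maps controlled by the Hopf-type maps appearing in $\mathbb{C}P^a$ and $\mathbb{C}P^b$. Filtering the Samelson product by skeleta produces obstructions in $\pi_{2(i+j)+2}(SU(n))_{(p)}$, and the restriction of $\langle \varepsilon_{a,n}, \varepsilon_{b,n}\rangle$ to the bottom cell $S^{2i+1}\wedge S^{2j+1}$ of each subcomplex is the sphere Samelson product $\langle \iota_{2i+1}, \iota_{2j+1}\rangle$, where $\iota_{2i+1}\colon S^{2i+1}\to SU(n)$ is obtained by precomposing $\varepsilon_{a,n}$ with the inclusion of the $(2i+1)$-cell of $\Sigma \mathbb{C}P^a$. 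Since $\eta=0$ at odd primes, the smash product splits partially into spheres, which eliminates most extension problems; combined with the known tables of $\pi_*(SU(n))_{(p)}$ in the relevant low-degree range, the least common multiple of the orders of the sphere Samelson products yields the divisibility $d \mid 3$ in the case $(a,b) = (1,2)$ and $d \mid 15$ in the remaining cases.

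For the lower bound I would detect the non-triviality of these sphere Samelson products. One option is to post-compose with the projection $SU(n) \to SU(n)/SU(n-k) = V_{n,k}(\mathbb{C})$ onto a complex Stiefel manifold, followed by a collapse to the top sphere $S^{2n-1}$ or $S^{2n-3}$, reducing matters to a Whitehead product in the homotopy of spheres. There the $3$-primary part can be pinned down using the generator $\alpha_1$, and the $5$-primary part using $\beta_1$. Equivalently, one evaluates the adjoint $[\varepsilon_{a,n}, \varepsilon_{b,n}]\colon S(\Sigma \mathbb{C}P^a \wedge \Sigma \mathbb{C}P^b)\to BSU(n)$ on the Chern class $c_n$ and uses the action of the Steenrod powers $P^i$ on the resulting mod-$p$ cohomology class to witness that its $p$-primary contribution is not divisible by $p$.

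The main obstacle will be at the prime $3$ for $n=4,5$, where $SU(n)$ does not split $3$-locally as a product of spheres and the Hopf attaching maps inside $\mathbb{C}P^a$ and $\mathbb{C}P^b$ remain $3$-locally non-trivial, so both the cell filtration and the detection step need additional care. For these cases I would work directly with the fibration $SU(n-1) \to SU(n) \to S^{2n-1}$ and its long exact sequence in homotopy, together with an explicit identification of the generators of $\pi_*(SU(n))_{(3)}$ in degrees $6, 8, 10, 12$ in terms of standard $3$-primary classes, so as to pin down each $\langle \iota_{2i+1}, \iota_{2j+1}\rangle$ on the nose.
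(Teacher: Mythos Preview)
Your route is quite different from the paper's. The paper never touches $\pi_*(SU(n))$ or sphere Samelson products; it uses the unstable $K$-theory method of Hamanaka--Kono. With $W_n=U(\infty)/U(n)$ and $X=\Sigma\mathbb{C}P^a\wedge\Sigma\mathbb{C}P^b$, the fibration $\Omega U(\infty)\to\Omega W_n\to U(n)\to U(\infty)$ gives $[X,U(n)]\cong\mathrm{Coker}\,(\Omega\pi)_*$, and the commutator on $U(n)$ lifts to a map $\tilde\gamma\colon U(n)\wedge U(n)\to\Omega W_n$ satisfying $\tilde\gamma^*(a_{2m})=\sum_{i+j=m-1}x_{2i+1}\otimes x_{2j+1}$. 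The paper shows, in each case, that $[X,\Omega W_n]$ is free abelian and injects it into $H^{2n}(X)\oplus H^{2n+2}(X)$ via $\alpha\mapsto(\alpha^*a_{2n},\alpha^*a_{2n+2})$; the image of $(\Omega\pi)_*$ there is computed by Chern characters of the monomial basis of $\tilde K^0(X)$, and the order of $\langle\varepsilon_{a,n},\varepsilon_{b,n}\rangle$ is simply the least $N$ for which $N\cdot(1,\ldots,1)$ lies in that lattice. This gives the exact order in a single linear-algebra step, uniformly over all odd primes, with no separation into upper and lower bounds and no input from $\pi_*(SU(n))$ at all (indeed the paper advertises precisely this feature). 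Your cell-filtration strategy is sound in outline and would eventually succeed, but the step you flag as the ``main obstacle'' genuinely is one: at $p=3$ with $b=3$ neither the source nor the target splits, and your plan to identify each $\langle\iota_{2i+1},\iota_{2j+1}\rangle$ on the nose via the fibration $SU(n-1)\to SU(n)\to S^{2n-1}$ amounts to recomputing Bott-type Samelson orders by hand in several unstable groups; that is feasible in the small cases at hand but is considerably more work than the $K$-theory argument and offers no evident advantage.
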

\begin{thm}\label{a2}
Let $a+b=n+k$. Localized at odd primes, the following hold:
\begin{displaymath}
d = \left\{ \begin{array}{ll}
5\cdot 3 & \textrm{\text{if} $k=1, a=b=2$ },\\
5\cdot 3^2& \textrm{\text{if} $k=1, a=2, b=3$ },\\
7\cdot 5\cdot 3^2 & \textrm{\text{if} $k=1, a=b=3$,\quad \text{or}\quad $k=2, a=b=3$ },\\
7\cdot 5^2\cdot 3^2 & \textrm{\text{if} $k=1, a=2, b=4$ },\\
7\cdot 5^3\cdot 3^4 & \textrm{\text{if} $k=3, a=b=4$}.
\end{array} \right.
\end{displaymath}
\end{thm}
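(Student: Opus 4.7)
The plan is to compute the $p$-primary component of $d$ at each odd prime $p$ separately and assemble the results. Since every value listed in the theorem involves only the primes $3$, $5$ and $7$, it suffices to show that at larger primes the Samelson product vanishes; this follows from McGibbon's classification \cite{[M]}, which identifies exactly those primes at which $SU(n)_{(p)}$ is homotopy commutative, and hence every Samelson product into it is null.

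For each remaining prime $p$, I would obtain matching upper and lower bounds. The upper bound rests on the Mimura--Nishida--Toda $p$-local product decomposition $SU(n)_{(p)}\simeq \prod_{i\geq 1} B_i(p)$ together with the known $p$-local splitting of $\Sigma\mathbb{C}P^a$ into a wedge of spheres (for $p>a$) or Moore spaces (otherwise). Projecting $\langle\varepsilon_{a,n},\varepsilon_{b,n}\rangle$ onto each factor $B_i(p)$ and applying the naturality identity $\langle\alpha\circ\phi,\beta\rangle=\langle\alpha,\beta\rangle\circ(\phi\wedge 1)$ reduces the question to a finite list of elements in unstable homotopy groups of small-cell spaces, whose orders are tabulated in Mimura--Toda. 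Taking the least common multiple of these orders yields the claimed upper bound.

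For the lower bound I would adapt the Steenrod-power argument used for Theorem \ref{a1}. Since $\varepsilon_{a,n}^{*}$ sends each transgressive generator $x_{2j+1}\in H^{*}(SU(n);\mathbb{F}_{p})$ to the corresponding generator of $H^{*}(\Sigma\mathbb{C}P^{a};\mathbb{F}_{p})$, the mod-$p$ operations $\mathcal{P}^{i}$ on $SU(n)$ pull back to their familiar action on $H^{*}(\mathbb{C}P^{\infty};\mathbb{F}_{p})$. Combining this with the Kudo transgression theorem lets one detect nontrivial multiples of $\langle\varepsilon_{a,n},\varepsilon_{b,n}\rangle$ by reading off the induced map in mod-$p$ cohomology. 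In the rows where a prime appears with exponent $\geq 2$ (for example $3^{2}$, $3^{4}$ or $5^{3}$), one must chain iterated and higher operations $\mathcal{P}^{1},\mathcal{P}^{p},\mathcal{P}^{p^{2}}$ to exhibit the full $p$-exponent.

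The main obstacle is the unstable regime $a+b\geq n+1$: here $\varepsilon_{a,n}$ does not factor through $\Sigma\mathbb{C}P^{n-1}$, so the stable-range analysis of Hamanaka--Kono \cite{[H3]} is not directly applicable. One must therefore control obstructions coming from the cells of $\mathbb{C}P^{a}$ and $\mathbb{C}P^{b}$ above dimension $2n-1$ through the long exact homotopy sequence of the bundle $SU(n)\to SU(n+k)\to SU(n+k)/SU(n)$, and then bootstrap from Theorem \ref{a1} (the case $k=0$) to reach $k=1,2,3$ inductively. Keeping the $p$-primary bookkeeping consistent across these inductive steps, while simultaneously ensuring the Steenrod detection still survives in the unstable range, is the crux of the argument.
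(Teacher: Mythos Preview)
Your approach is genuinely different from the paper's, and the lower-bound half contains a real gap.

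The paper does not work prime by prime. It uses the fibration $\Omega U(\infty)\to\Omega W_n\to U(n)$ (with $W_n=U(\infty)/U(n)$), which gives $U_n(Z)\cong\mathrm{Coker}\,(\Omega\pi')_*$ for $Z=\Sigma\mathbb{C}P^a\wedge\Sigma\mathbb{C}P^b$. Citing \cite{[KKT]}, it shows that after odd localization $[Z,\Omega W_n]$ is free abelian and embeds, via $\lambda'(\alpha)=(\alpha^*a_{2n},\ldots,\alpha^*a_{2n+2k+2})$, into $\bigoplus_i H^{2i}(Z)$. The image of $(\Omega\pi')_*$ is then written down explicitly from the Chern characters of the monomials $x^i\otimes x^j\in\tilde K^0(Z)$, while the lift $\tilde\gamma\circ(\varepsilon_{a,n}\wedge\varepsilon_{b,n})$ of the Samelson product maps under $\lambda'$ to the vector $(1,\ldots,1)$. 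The order $d$ is therefore the least positive integer with $d\cdot(1,\ldots,1)$ in that image, a single integer linear-algebra problem that produces the exact odd part in one stroke, including all higher $p$-exponents.

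Your proposed lower bound via Steenrod powers is where the plan breaks. Mod-$p$ operations detect nonvanishing modulo $p$; they do not, by themselves, certify that $p\cdot\langle\varepsilon,\varepsilon\rangle$, let alone $p^{3}\cdot\langle\varepsilon,\varepsilon\rangle$, is still essential. ``Chaining'' $\mathcal P^{1},\mathcal P^{p},\mathcal P^{p^{2}}$ raises cohomological degree but does not raise the $p$-divisibility you can see; to reach exponents such as $3^{4}$ or $5^{3}$ you would need secondary (or higher) operations or an integral obstruction theory, and none is indicated. Note also that the paper's proof of Theorem~\ref{a1} is not a Steenrod-power argument either---it is the same unstable $K$-theory/Chern-character computation in the special case $k=0$---so there is no template there to adapt. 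Your MNT/Mimura--Toda upper bound and the McGibbon cutoff are reasonable ingredients (though McGibbon only gives $p>2n$, so e.g.\ for $n=7$ the primes $11,13$ must still be killed by the upper bound), but without a matching lower bound the argument cannot close; the unstable $K$-theory route of \cite{[H3]} and \cite{[KKT]} is what actually pins down the order.
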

We denote by $-_{(p)}$ the localisation at a prime $p$ in the sense of Bousfield and Kan \cite{[BK]}. We prove these theorems in Sections \ref{b0} and \ref{c0} respectively, and in Section \ref{d0}, we give two applications of them. The first application is on the homotopy commutativity of $SU(n)_{(p)}$, for $n=3, 4, 5$ and $p > 2n$. By \cite{[M]}, we know that this fact is already proved by McGibbon but in this article, we give a new proof of this fact which does not require the homotopy groups of $SU(n)$, for $n=3, 4$ and $5$. Namely, we obtain the following corollary.
\begin{cor}\label{a3}
For $n=3, 4, 5$, if $p > 2n$ then $SU(n)_{(p)}$ is homotopy commutative.
\end{cor}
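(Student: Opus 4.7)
The strategy is to show that the commutator map $c\colon SU(n)_{(p)} \wedge SU(n)_{(p)} \to SU(n)_{(p)}$ is null-homotopic for $n \in \{3,4,5\}$ and $p > 2n$, which is equivalent to homotopy commutativity. The principal inputs are the vanishing of the Samelson products computed in Theorems \ref{a1} and \ref{a2}, together with Hamanaka--Kono's computation of $\langle \varepsilon_{a,n}, \varepsilon_{b,n}\rangle$ for $a+b \leq n-1$ in \cite{[H3]}.

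First, I would verify the arithmetic. For $n = 3, 4, 5$ one has $p \geq 7, 11, 11$ respectively, and every prime factor appearing in the orders listed in Theorems \ref{a1} and \ref{a2} lies in $\{3, 5, 7\}$; thus each such order is a unit in $\mathbb{Z}_{(p)}$, and every Samelson product $\langle \varepsilon_{a,n}, \varepsilon_{b,n}\rangle$ with $a+b \geq n$ becomes null-homotopic after localisation at $p$. The remaining range $a+b \leq n-1$ is handled by \cite{[H3]}, and these Samelson products also vanish at $p > 2n$.

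Next, I would reduce homotopy commutativity of $SU(n)_{(p)}$ to the vanishing of the $\langle \varepsilon_{a,n}, \varepsilon_{b,n}\rangle$ for $1 \leq a, b \leq n-1$. The inclusions $\varepsilon_{a,n}$ collectively detect all the generators of $H^*(SU(n);\mathbb{Z}_{(p)}) \cong \Lambda(x_3, x_5, \dots, x_{2n-1})$, since $x_{2a+1}$ is the image of the top-cell generator of $\Sigma\mathbb{C}P^a$. Using the $p$-local product decomposition $SU(n)_{(p)} \simeq S^3_{(p)} \times S^5_{(p)} \times \cdots \times S^{2n-1}_{(p)}$ (valid for $p > 2n$), together with the resulting wedge decomposition of $SU(n) \wedge SU(n)$ into suspended smashes of the generating complexes, a standard obstruction argument shows that $c$ is null-homotopic at $p$ precisely when each composite $c \circ (\varepsilon_{a,n} \wedge \varepsilon_{b,n}) = \langle \varepsilon_{a,n}, \varepsilon_{b,n}\rangle$ is null-homotopic for $1 \leq a, b \leq n-1$. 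Combining this reduction with the first step yields the conclusion.

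The hard part is justifying the reduction: making precise the sense in which the family $\{\varepsilon_{a,n}\}_{a=1}^{n-1}$ generates $SU(n)_{(p)}$ strongly enough, as an H-space, so that vanishing on the pairwise smashes propagates to the full commutator on $SU(n) \wedge SU(n)$. For the small values $n = 3, 4, 5$ this can be done by a direct analysis of the finitely many obstructions arising in the above wedge decomposition, but some care is needed to handle the difference between the product H-structure on $\prod S^{2i-1}_{(p)}$ and the H-structure inherited from $SU(n)$, since the decomposition is only guaranteed to be an H-equivalence in the range $p > 2n$ that we are restricted to.
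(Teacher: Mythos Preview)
Your approach differs from the paper's in a substantive way, and the gap you flag in your final paragraph is exactly where the paper supplies the missing idea. The paper does \emph{not} assemble the commutator from the full collection of Samelson products $\langle\varepsilon_{a,n},\varepsilon_{b,n}\rangle$; it uses only the single top product $\langle\varepsilon_{n-1,n},\varepsilon_{n-1,n}\rangle$ (Proposition~\ref{c4} for $n=3$, Proposition~\ref{c9} for $n=4,5$), together with the key observation that the evaluation map $ev=ad^{-1}1_{SU(n)}\colon \Sigma SU(n)\to BSU(n)$ factors as $\phi_{n-1}\circ r$ for a suitable retraction $r\colon\Sigma SU(n)\to\Sigma^2\mathbb{C}P^{n-1}$, where $\phi_{n-1}=ad^{-1}\varepsilon_{n-1,n}$. (This is asserted for $n=3$ in the proof and proved for $n=4$ as Lemma~\ref{d2}; the Hopf--construction argument there adapts to $n=3,5$.) Once $[\phi_{n-1},\phi_{n-1}]=0$, the map $\phi_{n-1}\vee\phi_{n-1}$ extends over $\Sigma^2\mathbb{C}P^{n-1}\times\Sigma^2\mathbb{C}P^{n-1}$, and precomposing that extension with $r\times r$ extends $ev\vee ev$ over $\Sigma SU(n)\times\Sigma SU(n)$; hence $[ev,ev]=0$ and $SU(n)_{(p)}$ is homotopy commutative.

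Your proposed reduction via the product splitting $SU(n)_{(p)}\simeq\prod_i S^{2i-1}$ runs into the difficulty that the wedge summands of $\Sigma(SU(n)\wedge SU(n))$ are not only the pairwise pieces $\Sigma(S^{2i+1}\wedge S^{2j+1})$: the product structure on each factor contributes higher smashes such as $\Sigma(S^{2i_1+1}\wedge S^{2i_2+1}\wedge S^{2j+1})$, and the restriction of $[ev,ev]$ to those is governed by the restriction of $ev$ to the corresponding higher summands of $\Sigma SU(n)$, which is not a Samelson product of two $\varepsilon$'s. Showing those restrictions vanish is equivalent to showing that $ev$ is null on every summand of $\Sigma SU(n)$ other than $\Sigma^2\mathbb{C}P^{n-1}$ --- and that is precisely the content of the factorisation $ev\simeq\phi_{n-1}\circ r$. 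So the ``hard part'' you identify is not routine bookkeeping; it is the substance of the argument, and the paper resolves it with the retraction lemma rather than with a piecewise obstruction analysis.
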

In the second application, we calculate the order of the commutator map $SU(4)\wedge SU(4) \rightarrow SU(4)$, when localized at odd primes. This gives us an upper bound on the number of homotopy types of gauge groups for principal $SU(4)$-bundles over an $n$-sphere.
\begin{thm}\label{a4}
Localized at odd primes, the order of the commutator map $SU(4)\wedge SU(4)\overset{c}\rightarrow SU(4)$ is $3^2\cdot 5 \cdot 7$.
\end{thm}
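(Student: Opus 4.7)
For any maps $f\colon X\to SU(4)$ and $g\colon Y\to SU(4)$, the Samelson product satisfies $\langle f,g\rangle = c\circ(f\wedge g)$, so $|\langle f,g\rangle|$ divides $|c|$. Applying this with $f=g=\varepsilon_{3,4}$ and invoking Theorem \ref{a2} in the case $k=2$, $a=b=3$, I obtain $|c| \geq |\langle \varepsilon_{3,4},\varepsilon_{3,4}\rangle| = 7\cdot 5\cdot 3^2 = 315$.

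\textbf{Upper bound.} Corollary \ref{a3} shows that $SU(4)_{(p)}$ is homotopy commutative for every prime $p>8$, so $c_{(p)}=0$ there and only the primes $p\in\{3,5,7\}$ contribute to $|c|$. For each such prime, the strategy is to build up $SU(4)$ through the ascending chain of subcomplexes $\varepsilon_{1,4}(\Sigma\mathbb{C}P^1)\subset\varepsilon_{2,4}(\Sigma\mathbb{C}P^2)\subset\varepsilon_{3,4}(\Sigma\mathbb{C}P^3)$, with the remaining cells of $SU(4)$ in dimensions $8,10,12,15$ arising multiplicatively from the lower cells via the $H$-space structure. Smashing this filtration with itself filters $SU(4)\wedge SU(4)$; on each graded piece built from $\Sigma\mathbb{C}P^a\wedge\Sigma\mathbb{C}P^b$ with $a,b\leq 3$ the restriction of $c$ is precisely $\langle\varepsilon_{a,4},\varepsilon_{b,4}\rangle$, whose order divides $315$ by Theorems \ref{a1} and \ref{a2}. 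The Jacobi-like identity for Samelson products then reduces the products involving the higher cells to combinations of those already controlled, and an inductive extension over attaching maps yields $315\cdot c = 0$.

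\textbf{Main obstacle.} The technical heart of the argument is the inductive extension of $315\cdot c=0$ across the attaching maps of the skeletal filtration: at each stage, one must show that an obstruction class in $[\Sigma Q, SU(4)]_{(p)}$—for $Q$ a subquotient of the smash filtration—is annihilated by $315$. At $p=5,7$ the decomposition $SU(4)_{(p)}\simeq S^3\times S^5\times S^7$ reduces these obstructions to routine computations in the $p$-local homotopy groups of odd spheres. At $p=3$ no such product splitting is available, and one must instead exploit the fibration $SU(3)\to SU(4)\to S^7$ together with finer information on $\pi_*(SU(4))_{(3)}$ in the relevant range to kill the obstructions; this $p=3$ bookkeeping is the principal hurdle.
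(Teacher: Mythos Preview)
Your lower bound is exactly right and is what the paper does as well: $\langle\varepsilon_{3,4},\varepsilon_{3,4}\rangle = c\circ(\varepsilon_{3,4}\wedge\varepsilon_{3,4})$ forces $3^2\cdot 5\cdot 7\mid |c|$ by Theorem~\ref{a2} (equivalently Proposition~\ref{c9}).

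For the upper bound, however, your strategy is quite different from the paper's and is substantially harder; the ``main obstacle'' you identify at $p=3$ is real and you have not actually resolved it. The paper sidesteps all of this obstruction-theoretic bookkeeping with a single structural observation that works uniformly at every odd prime and shows directly that $|c|=|\langle\varepsilon_{3,4},\varepsilon_{3,4}\rangle|$ (Lemma~\ref{d3}). The key input is Lemma~\ref{d2}: at odd primes the evaluation map $ev\colon\Sigma SU(4)\to BSU(4)$ factors as $f\circ\theta$, where $\theta\colon\Sigma SU(4)\to\Sigma^2\mathbb{C}P^3$ is a retraction onto the bottom summand and $f=ev\circ\Sigma\varepsilon_{3,4}$. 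The reason is that, after one suspension, $\Sigma SU(4)\simeq \Sigma^2\mathbb{C}P^3\vee\bigl(S^{11}\vee\Sigma^6 B(3,7)\bigr)$, and the complementary summand---which carries exactly your ``multiplicative'' cells in dimensions $11,13,16$---can be chosen to factor through the Hopf construction $\mu^*\colon\Sigma SU(4)\wedge SU(4)\to\Sigma SU(4)$. By Ganea, $\mu^*$ is the homotopy fibre of $ev$, so this summand maps trivially into $BSU(4)$. Adjointing, $[ev,ev]$ factors through $[f,f]$, and since $[f,f]$ is the adjoint of $\langle\varepsilon_{3,4},\varepsilon_{3,4}\rangle$, the two orders coincide.

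In short: rather than filtering $SU(4)\wedge SU(4)$, applying Jacobi-type identities, and chasing obstructions in $\pi_*(SU(4))_{(3)}$, the paper kills all the higher cells of $\Sigma SU(4)$ in one blow by recognising them as coming from the Hopf construction and hence lying in the fibre of $ev$. Your approach might be completable, but as written the $p=3$ step is only a promise; the paper's argument needs no prime-by-prime analysis and no homotopy-group computations beyond those already encoded in Proposition~\ref{c9}.
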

Let $m$ and $n$ be two positive integers such that $m < n$. Let $Q_{n-m+1}$ be the symplectic quasi-projective space of rank $n-m+1$, we denote the inclusion $Q_{n-m+1}\rightarrow Sp(n)$ by $\epsilon_{m,n}$. Also, let $\varepsilon_{m,n}\colon S^{4m-1} \rightarrow Sp(n)$ represents the generator of $\pi_{4m-1}(Sp(n))$. In the second part of this article, we will study the order of the Samelson product $\langle \varepsilon_{m,n},\epsilon_{m,n} \rangle\colon S^{4m-1}\wedge Q_{n-m+1}\rightarrow Sp(n)$.  We will prove the following theorem.
\begin{thm}\label{a5}
The order of the Samelson product $\langle \varepsilon_{m,n},\epsilon_{m,n} \rangle\colon S^{4m-1}\wedge Q_{n-m+1}\rightarrow Sp(n) $ is equal to
\begin{displaymath}
 \left\{ \begin{array}{ll}
\frac{(2n+1)!}{(2n-2m+1)!} & \textrm{if $m$ is even },\\\\
\frac{2(2n+1)!}{(2n-2m+1)!} & \textrm{if $m$ is odd }.\\
\end{array} \right.
\end{displaymath}
\end{thm}
The theorem recovers the known case in \cite{[KK]} when $m=1$.
\section{The Samelson product $\langle \varepsilon_{a,n}, \varepsilon_{b,n}\rangle$ when $a+b = n$}\label{b0}
In this section, we by use of unstable $K$-theory will calculate the order of the Samelson
product $\langle \varepsilon_{a,n}, \varepsilon_{b,n}\rangle$ when $a+b=n$ in some cases for $a$ and $b$. We denote the homogeneous space $U(\infty)/U(n)$ by $W_n$ and $[X, U(n)]$ by $U_n(X)$. Put $X=\Sigma \mathbb{C}P^a \wedge \Sigma \mathbb{C}P^b$, where $a+b=n$. Recall that the cohomologies of $U(n)$, $U(\infty)$ and $BU(\infty)$ are given by
$H^*(U(n);\mathbb{Z})= \Lambda (x_1,x_3,\cdots,x_{2n-1})$, $H^*(U(\infty);\mathbb{Z})= \Lambda (x_1,x_3,\cdots)$ and $H^*(BU(\infty);\mathbb{Z})= \mathbb{Z}[c_1,c_2,\cdots]$, where $x_{2i-1}=\sigma c_i$ and $\sigma$ is the cohomology suspension and also $c_i$ is the $i$-th universal Chern class and $|x_{2i-1}|=2i-1$. Consider the projection $\pi\colon U(\infty) \rightarrow W_n$. As an algebra we have that the cohomology of $W_n$ is given by $H^*(W_n;\mathbb{Z})=\Lambda (\bar{x}_{2n+1}, \bar{x}_{2n+3}, \cdots)$, where $\pi^*(\bar{x}_{2i+1})=x_{2i+1}$. Also, for $\ast< 4n$, we have $H^*(\Omega W_n;\mathbb{Z})= \mathbb{Z} <a_{2n},a_{2n+2},\cdots > $, that is, $H^*(\Omega W_n;\mathbb{Z})$ is a free $\mathbb{Z}$-module with its generators $a_{2i} (i\geq n)$, where $a_{2i} =\sigma (\bar{x}_{2i+1})$. Consider the fibration sequence
\begin{eqnarray}
\Omega U(\infty)\overset{\Omega \pi}\longrightarrow \Omega W_n\overset{\delta}\longrightarrow U(n)
\overset{j}\longrightarrow U(\infty)\overset{\pi}\longrightarrow W_n.
\end{eqnarray}
Now apply the functor $[X,-]$ to fibration $(2.1)$. Then there is an exact sequence of groups
\begin{eqnarray*}
[X,\Omega U(\infty)]\overset{(\Omega \pi)_*}\longrightarrow [X,\Omega W_n]\overset{\delta_*}\longrightarrow [X,
U(n)]\overset{j_*}\longrightarrow [X,U(\infty)]\overset{\pi_*}\longrightarrow [X, W_n].
\end{eqnarray*}
Since $W_n$ is $2n$-connected, for $i \leq 2n$ we have $\pi_i(W_n)=0$. By the homotopy sequence of the fibration $(2.1)$, we have the following isomorphisms (for example, see \cite{[H2]})
\begin{eqnarray*}
\pi_{2n+1}(W_n)\cong \mathbb{Z},\quad
\pi_{2n+2}(W_n)\cong \left\{ \begin{array}{ll}
0 & \textrm{\text{if $n$ is odd} },\\
\mathbb{Z}_2 & \textrm{\text{if $n$ is even} },
\end{array} \right.\quad
\pi_{2n+3}(W_n)\cong \left\{ \begin{array}{ll}
\mathbb{Z} & \textrm{\text{if $n$ is odd} },\\
\mathbb{Z}\oplus\mathbb{Z}_2 & \textrm{\text{if $n$ is even} }.
\end{array} \right.
\end{eqnarray*}
Observe that $[X,U(\infty)]\cong [\Sigma X, BU(\infty)]\cong \tilde{K}^0(\Sigma X)\cong 0$, since $\Sigma X$ is a $CW$-complex consisting only of odd dimensional cells. Thus we get the following exact sequence:
$$\tilde{K}^0(X)\overset{(\Omega \pi)_*}\longrightarrow [X,\Omega W_n]\overset{\delta_*}\longrightarrow U_n(X)\rightarrow 0.$$
Therefore we have the following lemma.
\begin{lem}\label{b1}
$U_n(X)\cong Coker (\Omega \pi)_*$. $\quad\Box$
\end{lem}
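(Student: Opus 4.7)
The lemma falls out immediately from the exact sequence displayed just before the statement, so the plan is essentially to collect the inputs that make that sequence end in zero and then read off the conclusion.

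The plan is to apply the functor $[X,-]$ to the fibration sequence $(2.1)$, yielding the five-term exact sequence already written in the excerpt:
$$
[X,\Omega U(\infty)]\xrightarrow{(\Omega\pi)_*}[X,\Omega W_n]\xrightarrow{\delta_*}U_n(X)\xrightarrow{j_*}[X,U(\infty)]\xrightarrow{\pi_*}[X,W_n].
$$
The crucial input is the vanishing of the fourth term. By adjunction and Bott periodicity, $[X,U(\infty)]\cong \widetilde{K}^{-1}(X)\cong \widetilde{K}^{0}(\Sigma X)$. Then I would verify explicitly that $\Sigma X=\Sigma(\Sigma\mathbb{C}P^a\wedge\Sigma\mathbb{C}P^b)$ has only odd-dimensional cells: the reduced suspension $\Sigma\mathbb{C}P^a$ has cells in dimensions $3,5,\dots,2a+1$, so the smash product lives in even dimensions, and one further suspension pushes everything back to odd dimensions. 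Hence the Atiyah--Hirzebruch spectral sequence for $\widetilde{K}^{0}$ collapses to zero, giving $[X,U(\infty)]=0$.

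With this vanishing, exactness forces $\delta_*$ to be surjective, so the sequence truncates to
$$
\widetilde{K}^{0}(X)\xrightarrow{(\Omega\pi)_*}[X,\Omega W_n]\xrightarrow{\delta_*}U_n(X)\longrightarrow 0,
$$
where the identification $[X,\Omega U(\infty)]\cong \widetilde{K}^{0}(X)$ is again Bott periodicity. The first isomorphism theorem then yields
$$
U_n(X)\ \cong\ [X,\Omega W_n]\big/\operatorname{Im}(\Omega\pi)_*\ =\ \operatorname{Coker}(\Omega\pi)_*,
$$
which is the claim.

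There is no real obstacle here; the only step that needs a moment of care is the cell-dimension count for $\Sigma X$, since if the parities were off the vanishing of $[X,U(\infty)]$ would fail and the sequence would not truncate as needed. Everything else is formal exactness together with the standard identifications coming from Bott periodicity.
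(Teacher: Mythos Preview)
Your proposal is correct and follows exactly the paper's approach: the paper also derives the truncated exact sequence by observing that $[X,U(\infty)]\cong \tilde{K}^0(\Sigma X)=0$ since $\Sigma X$ has only odd-dimensional cells, and then reads off the cokernel description. Your explicit parity check on the cells of $\Sigma X$ simply spells out what the paper asserts in one line.
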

Let $a_{2n}$ and $a_{2n+2}$ be generators of $H^{2n}(\Omega W_n)\cong \mathbb{Z}$ and $H^{2n+2}(\Omega W_n)\cong \mathbb{Z}$ respectively, and $\alpha\in [X, \Omega W_n]$. We define a map
$\lambda\colon [X,\Omega W_n]\rightarrow H^{2n}(X)\oplus H^{2n+2}(X)\cong \mathbb{Z}^{\oplus3}$, by $\lambda(\alpha)=(\alpha^*(a_{2n}), \alpha^*(a_{2n+2}))$. The cohomology class $\bar{x}_{2n+1}$ represents a map $\bar{x}_{2n+1}\colon W_n \rightarrow K(\mathbb{Z}, 2n+1)$ and $a_{2n}=\sigma (\bar{x}_{2n+1})$ represents its loop $\Omega \bar{x}_{2n+1}\colon  \Omega W_n \rightarrow \Omega K(\mathbb{Z}, 2n+1)$. Similarly $a_{2n+2}=\sigma(\bar{x}_{2n+3})$ represents a loop map. This implies the map $\lambda$ is a group homomorphism. Recall that $H^*(\mathbb{C}P^n)= \mathbb{Z}[t]/(t^{n+1})$, where $|t|=2$ and $K(\mathbb{C}P^n)= \mathbb{Z}[x]/(x^{n+1})$. Note that $\tilde{K}^0(\Sigma \mathbb{C}P^a \wedge \Sigma \mathbb{C}P^b)$ is a free abelian group generated by $\zeta_1\otimes (x^i \otimes x^j)$, where
$1 \leq i \leq a, 1 \leq j \leq b$ and $\zeta_1$ a generator of $\tilde{K}^0(S^2)$. In the following, we consider some cases for $a$ and $b$.\\
$\bullet\quad a=b=2$\\
Consider the following cofibration sequence
\begin{eqnarray}
S^3\overset{\eta}\longrightarrow S^2\rightarrow \mathbb{C}P^2\overset{q}\longrightarrow S^4 \overset{\Sigma\eta}\longrightarrow S^3,
\end{eqnarray}
where $\eta$ and $q$ are the Hopf map and the quotient map, respectively. We have the following lemma.
\begin{lem}\label{b2}
The map $\lambda\colon [X,\Omega W_4]\rightarrow H^8(X)\oplus H^{10}(X)$ is monomorphism.
\end{lem}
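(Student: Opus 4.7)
The plan is to analyze $[X, \Omega W_4]$ through the skeletal filtration of $X = \Sigma\mathbb{C}P^2 \wedge \Sigma\mathbb{C}P^2$. Since the cells of $X$ lie only in dimensions $6, 8, 8, 10$, one has $X^{(6)} \simeq S^6$, $X^{(8)}/X^{(6)} \simeq S^8 \vee S^8$, and $X/X^{(8)} \simeq S^{10}$. From the listed values $\pi_{9}(W_4) \cong \mathbb{Z}$, $\pi_{10}(W_4) \cong \mathbb{Z}_2$, $\pi_{11}(W_4) \cong \mathbb{Z} \oplus \mathbb{Z}_2$ one reads off that the $7$-connected space $\Omega W_4$ has $\pi_8(\Omega W_4) \cong \mathbb{Z}$, $\pi_9(\Omega W_4) \cong \mathbb{Z}_2$, and $\pi_{10}(\Omega W_4) \cong \mathbb{Z} \oplus \mathbb{Z}_2$.

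First I would apply $[-, \Omega W_4]$ to the cofibration $S^6 \hookrightarrow X^{(8)} \to S^8 \vee S^8$; the vanishing of $\pi_6(\Omega W_4)$ and $\pi_7(\Omega W_4)$ gives an isomorphism $[X^{(8)}, \Omega W_4] \cong \mathbb{Z}^{\oplus 2}$, realised by the pull-back of $a_8$, so that the restricted map $\lambda_8 \colon [X^{(8)}, \Omega W_4] \to H^8(X^{(8)}) = H^8(X)$ is an iso. Next, applying $[-, \Omega W_4]$ to the cofibration $X^{(8)} \hookrightarrow X \xrightarrow{q} S^{10}$ produces the exact sequence
\[
[\Sigma X^{(8)}, \Omega W_4] \xrightarrow{(\Sigma\phi)^*} \pi_{10}(\Omega W_4) \xrightarrow{q^*} [X, \Omega W_4] \xrightarrow{i^*} [X^{(8)}, \Omega W_4],
\]
where $\phi \colon S^9 \to X^{(8)}$ is the attaching map of the top cell. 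Given $\alpha \in \ker \lambda$, the vanishing of the $a_8$-component of $\lambda(\alpha)$ together with the iso $\lambda_8$ force $i^*(\alpha) = 0$, so $\alpha = q^*(\tilde\alpha)$ for some $\tilde\alpha \in \pi_{10}(\Omega W_4)$; the vanishing of the $a_{10}$-component then forces $\tilde\alpha$ into the torsion summand $\mathbb{Z}_2 \subset \pi_{10}(\Omega W_4)$.

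The hard part will be showing that this $\mathbb{Z}_2$-class lies in $\mathrm{Im}\,(\Sigma\phi)^*$, which would yield $\alpha = q^*(\tilde\alpha) = 0$ and complete the proof. Concretely, one needs to unpack $\phi$: by the standard formula for the top-cell attaching map of a smash product of two $2$-cell complexes, applied to $\Sigma\mathbb{C}P^2 = S^3 \cup_{\Sigma\eta} e^5$ obtained from the cofibration $(2,2)$, $\phi$ carries a nontrivial contribution from $\Sigma\eta \wedge \Sigma\eta$. The plan is to construct a map $\gamma \colon \Sigma X^{(8)} \to \Omega W_4$ hitting the generator of $\pi_9(\Omega W_4)$ on one of the two $9$-cells of $\Sigma X^{(8)}$ (which is possible, since $\Omega W_4$ is $7$-connected so the attaching map $\Sigma^5 \eta \colon S^8 \to S^7$ composes trivially to $\Omega W_4$), and then to show, via Hopf multiplication on $\pi_*(\Omega W_4)$, that $\gamma \circ \Sigma \phi$ represents the generator of $\mathbb{Z}_2 \subset \pi_{10}(\Omega W_4)$. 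Once this step is carried out, $\ker \lambda = 0$, as required.
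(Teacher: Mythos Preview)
Your strategy is sound and the outlined computation can be completed, but it is genuinely different from the paper's argument. The paper never touches the skeletal filtration of $X$ or the attaching map $\phi$ of the top cell. Instead it exploits the smash--product structure: applying $[\Sigma\mathbb{C}P^2\wedge -,\Omega W_4]$ to the cofibration $S^4\to S^3\to\Sigma\mathbb{C}P^2\to S^5$ reduces the problem to computing $[\Sigma^k\mathbb{C}P^2,\Omega W_4]$ for $k=4,5,6$, and these in turn are handled by applying $[\Sigma^k-,\Omega W_4]$ to the Hopf cofibration for $\mathbb{C}P^2$. At that level the $\mathbb{Z}_2$ torsion in $\pi_{10}(W_4)$ and in $\pi_{11}(W_4)$ is killed by the $\eta$--multiplication maps $\eta',\eta''$ (exactly the mechanism you are invoking, but one stage earlier), so one obtains $[\Sigma^5\mathbb{C}P^2,\Omega W_4]=0$ and a short exact sequence $0\to\mathbb{Z}^{\oplus2}\to[X,\Omega W_4]\to\mathbb{Z}\to0$. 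This shows $[X,\Omega W_4]\cong\mathbb{Z}^{\oplus3}$ is free, and injectivity of $\lambda$ then follows because $\lambda$ is rationally an isomorphism.

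Your route via the skeleta is also correct, and your ``hard part'' does go through: the pinch $X^{(8)}\to X^{(8)}/S^6\simeq S^8\vee S^8$ sends $\phi$ to $(\Sigma^6\eta,\Sigma^6\eta)$ (there is no Whitehead--product contribution in this dimension), and since the generator of $\pi_9(\Omega W_4)\cong\mathbb{Z}_2$ is itself an $\eta$--multiple of the generator of $\pi_8(\Omega W_4)$, your $\gamma\circ\Sigma\phi$ indeed represents the $\eta^2$--class generating the $\mathbb{Z}_2$ summand of $\pi_{10}(\Omega W_4)$. So nothing is wrong; but note that the paper's approach packages exactly this $\eta$--bookkeeping into the computations of $[\Sigma^k\mathbb{C}P^2,\Omega W_4]$, avoiding any analysis of $\phi$ and arriving at the stronger conclusion that $[X,\Omega W_4]$ is free, from which injectivity of $\lambda$ is immediate.
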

\begin{proof}
First, we show the group $[X,\Omega W_4]$ is a free abelian group. Recall $X=\Sigma \mathbb{C}P^2 \wedge \Sigma \mathbb{C}P^2$. Consider the following cofibration
\begin{eqnarray}
S^4\rightarrow S^3\rightarrow \Sigma\mathbb{C}P^2\rightarrow S^5 \rightarrow S^4.
\end{eqnarray}
Now apply $[\Sigma\mathbb{C}P^2\wedge -,\Omega W_4]$ to the cofibration $(2.3)$, we get
\begin{align*}
[\Sigma^5\mathbb{C}P^2,\Omega W_4]\rightarrow [\Sigma^6\mathbb{C}P^2,\Omega W_4]&\rightarrow [\Sigma\mathbb{C}P^2\wedge\Sigma\mathbb{C}P^2,\Omega W_4]\\&
\rightarrow [\Sigma^4\mathbb{C}P^2,\Omega W_4]\rightarrow [\Sigma^5\mathbb{C}P^2,\Omega W_4].\quad (\ast)
\end{align*}
Now apply $[\Sigma^4-,\Omega W_4]$, $[\Sigma^5-,\Omega W_4]$ and $[\Sigma^6-,\Omega W_4]$ to the cofibration $(2.2)$, respectively
\begin{align*}
&\pi_8(W_4)\rightarrow \pi_9(W_4)\rightarrow [\Sigma^4\mathbb{C}P^2,\Omega W_4]\rightarrow \pi_7(W_4),\\
&\pi_9(W_4)\overset{\eta'}\rightarrow \pi_{10}(W_4)\rightarrow [\Sigma^5\mathbb{C}P^2,\Omega W_4]\rightarrow \pi_8(W_4),\\
&\pi_{10}(W_4)\overset{\eta''}\rightarrow \pi_{11}(W_4)\rightarrow [\Sigma^6\mathbb{C}P^2,\Omega W_4]\rightarrow \pi_9(W_4)\rightarrow \pi_{10}(W_4),
\end{align*}
where $\eta'$ and $\eta''$ are induced by the Hopf maps $\Sigma^7\eta \colon S^{10}\rightarrow S^9$ and $\Sigma^8\eta \colon S^{11}\rightarrow S^{10}$. The inclusion $i\colon S^9 \rightarrow W_4$ generates $\pi_9(W_4)\cong \mathbb{Z}$ and the compositions $j'\colon S^{10} \overset{\Sigma^7\eta}\longrightarrow S^9 \overset{i}\longrightarrow W_4$ and $j''\colon S^{11} \overset{\Sigma^8\eta}\longrightarrow S^{10} \overset{\Sigma^7\eta}\longrightarrow S^9 \overset{i}\longrightarrow W_4$ generate $\pi_{10}(W_4)\cong \mathbb{Z}_2$
and $\mathbb{Z}_2$-summand of $\pi_{11}(W_4)\cong \mathbb{Z}\oplus\mathbb{Z}_2$, respectively. Since $\eta'$ sends $i$ to $j'$, so the map $\eta'\colon \mathbb{Z}\rightarrow\mathbb{Z}_2$ is surjective. Also, since $\eta''$ sends $j'$ to $j''$, then the cokernel of $\eta''$ is isomorphic to $\mathbb{Z}$. Thus we have the following exact sequences
\begin{align*}
& 0\rightarrow\mathbb{Z}\rightarrow [\Sigma^4\mathbb{C}P^2,\Omega W_4]\rightarrow 0,\\
& \mathbb{Z}\overset{onto}\longrightarrow\mathbb{Z}_2\rightarrow [\Sigma^5\mathbb{C}P^2,\Omega W_4]\rightarrow 0,\\
&\mathbb{Z}_2\rightarrow \mathbb{Z}\oplus\mathbb{Z}_2\rightarrow [\Sigma^6\mathbb{C}P^2,\Omega W_4]\rightarrow \mathbb{Z}\overset{onto}\longrightarrow\mathbb{Z}_2,
\end{align*}
and therefore we get $[\Sigma^4\mathbb{C}P^2,\Omega W_4]\cong \mathbb{Z}$, $[\Sigma^5\mathbb{C}P^2,\Omega W_4]\cong 0$ and $[\Sigma^6\mathbb{C}P^2,\Omega W_4]\cong \mathbb{Z}\oplus \mathbb{Z}$, respectively. Thus according to the long exact sequence $(\ast)$ we get the following short exact
sequence
$$0\rightarrow \mathbb{Z}\oplus\mathbb{Z}\rightarrow [\Sigma\mathbb{C}P^2\wedge\Sigma\mathbb{C}P^2,\Omega W_4]\rightarrow \mathbb{Z}\rightarrow 0,$$
therefore we get $[\Sigma\mathbb{C}P^2\wedge\Sigma\mathbb{C}P^2,\Omega W_4]$ is a free abelian group isomorphic to $\mathbb{Z}\oplus\mathbb{Z}\oplus\mathbb{Z}$. Thus the homomorphism $\lambda$ is monomorphism.
\end{proof}
We have the following lemma.
\begin{lem}\label{b3}
Im $\lambda\circ (\Omega\pi)_*$ is generated by $\alpha_{1,1},\cdots, \alpha_{2,2}$, where
$$\alpha_{1,1}=(\frac{1}{2}4!, \frac{1}{2}4!, \frac{1}{4}5!), \alpha_{1,2}=(4!,0, \frac{1}{2}5!), \alpha_{2,1}=(0, 4!, \frac{1}{2}5!), \alpha_{2,2}=(0,0,5!). $$
\end{lem}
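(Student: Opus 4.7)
The plan is to compute $\lambda\circ(\Omega\pi)_*$ directly on each of the four generators $\xi_{i,j}=\zeta_1\otimes x^i\otimes x^j$ ($1\le i,j\le 2$) of $\tilde{K}^0(X)$, where $X=\Sigma\mathbb{C}P^2\wedge\Sigma\mathbb{C}P^2$, and read off the resulting triples in the basis of $H^8(X)\oplus H^{10}(X)\cong\mathbb{Z}^{\oplus 3}$ given by $s t_1 t_2^2,\; s t_1^2 t_2,\; s t_1^2 t_2^2$, where $s$ generates $\tilde H^2(S^2)$ and $t_1,t_2$ are the generators of $\tilde H^2(\mathbb{C}P^2)$ in the two smash factors.

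The key input is the identity
\[(\Omega\pi)^*(a_{2i})=i!\,\mathrm{ch}_i\qquad\text{in } H^{2i}(BU;\mathbb{Z}),\]
where $\mathrm{ch}_i$ denotes the degree-$2i$ component of the universal Chern character. To justify it, combine $a_{2i}=\sigma(\bar{x}_{2i+1})$, the relation $\pi^*(\bar{x}_{2i+1})=x_{2i+1}$, and $x_{2i+1}=\sigma(c_{i+1})$ to obtain $(\Omega\pi)^*(a_{2i})=\sigma^2 c_{i+1}$; under Bott periodicity $\Omega^2 BU\simeq BU\times\mathbb{Z}$ this double cohomology suspension equals the Newton power sum $s_i=i!\,\mathrm{ch}_i$, which is integral via Newton's identities and generates the primitives of $H^{2i}(BU;\mathbb{Z})$ modulo decomposables. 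This is the normalisation already used in Hamanaka's work on the unstable $K$-theory of $U(n)$.

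With that in hand, for every $\xi\in\tilde{K}^0(X)$ one has
\[\lambda\bigl((\Omega\pi)_*\xi\bigr)=\bigl(4!\,\mathrm{ch}_4(\xi),\ 5!\,\mathrm{ch}_5(\xi)\bigr),\]
and multiplicativity of the Chern character together with the relations $t_1^3=t_2^3=0$ yields
\[\mathrm{ch}(\zeta_1\otimes x^i\otimes x^j)=s\,(e^{t_1}-1)^i(e^{t_2}-1)^j.\]
For $\xi_{1,1}$, expanding gives $s(t_1+\tfrac{t_1^2}{2})(t_2+\tfrac{t_2^2}{2})$, so the degree-$8$ part contributes $(12,12)$ after multiplication by $4!$ and the degree-$10$ part contributes $30$ after multiplication by $5!$, giving $\alpha_{1,1}$. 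For $\xi_{2,2}$ only the term $s t_1^2 t_2^2$ survives and yields $(0,0,120)=\alpha_{2,2}$; the mixed cases $\xi_{1,2}$ and $\xi_{2,1}$ are handled identically, producing $\alpha_{1,2}$ and $\alpha_{2,1}$ respectively.

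The main obstacle in this plan is the identification $(\Omega\pi)^*(a_{2i})=i!\,\mathrm{ch}_i$; once this primitive-class formula is granted, everything else reduces to routine expansion of the exponential series truncated by $t_i^3=0$ and collecting coefficients in the chosen cohomology basis of $H^8(X)\oplus H^{10}(X)$.
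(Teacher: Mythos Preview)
Your proof is correct and follows essentially the same approach as the paper: both use the identity $(\Omega\pi)^*(a_{2i})=i!\,\mathrm{ch}_i$ (which the paper simply cites from Hamanaka--Kono) and then evaluate the Chern character on the four generators $\zeta_1\otimes x^i\otimes x^j$ of $\tilde{K}^0(\Sigma\mathbb{C}P^2\wedge\Sigma\mathbb{C}P^2)$ to read off the triples. Your packaging via $\mathrm{ch}(\zeta_1\otimes x^i\otimes x^j)=s(e^{t_1}-1)^i(e^{t_2}-1)^j$ is just a compact way of writing what the paper does term by term.
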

\begin{proof}
By \cite{[H3]}, recall that $(\Omega\pi)_*(a_{2n})= n!ch_n$. Also, $H^*(\mathbb{C}P^2)= \mathbb{Z}[t]/(t^3)$, where $|t|=2$ and $K(\mathbb{C}P^2)= \mathbb{Z}[x]/(x^3)$, where $chx=t+\frac{1}{2}t^2$ and $chx^2 = t^2$. Note that $\tilde{K}^0(\Sigma\mathbb{C}P^2\wedge\Sigma\mathbb{C}P^2)\cong \mathbb{Z}^{\oplus4}$ with Chern characters
$ch(x\otimes x)=t\otimes t+ t\otimes\frac{1}{2}t^2+ \frac{1}{2}t^2\otimes t+\frac{1}{2}t^2\otimes\frac{1}{2}t^2$ and
\begin{align*}
ch(x\otimes x^2)=t\otimes t^2+ \frac{1}{2}t^2\otimes t^2,\quad
ch(x^2\otimes x)=t^2\otimes t+ t^2\otimes\frac{1}{2}t^2, \quad
ch(x^2\otimes x^2)=t^2\otimes t^2.
\end{align*}
According to the definition of the map $\lambda$, we have
$$\lambda \circ(\Omega \pi)_* (\zeta_1\otimes(x\otimes x))=((\Omega \pi\circ (\zeta_1\otimes(x\otimes x)))^*(a_8), (\Omega \pi\circ (\zeta_1\otimes(x\otimes x)))^*(a_{10}) ).$$
The calculation of the first component is as follows
\begin{align*}
(\Omega \pi\circ (\zeta_1\otimes(x\otimes x)))^*(a_8)&=a_8\circ\Omega \pi\circ(\zeta_1\otimes(x\otimes x))=4!ch_4(\zeta_1\otimes(x\otimes x))\\
&=4!ch_1(\zeta_1)ch_3(x\otimes x)=4!\sigma^2(t\otimes\frac{1}{2}t^2+ \frac{1}{2}t^2\otimes t),
\end{align*}
and calculation the second component is as follows
\begin{align*}
(\Omega \pi\circ (\zeta_1\otimes(x\otimes x)))^*(a_{10})&=a_{10}\circ\Omega \pi\circ(\zeta_1\otimes(x\otimes x))=5!ch_5(\zeta_1\otimes(x\otimes x))\\
&=5!ch_1(\zeta_1)ch_4(x\otimes x)=5!\sigma^2(\frac{1}{2}t^2\otimes\frac{1}{2}t^2).
\end{align*}
Therefore we obtain $\alpha_{1,1}=\lambda \circ(\Omega \pi)_* (\zeta_1\otimes(x\otimes x))=(\frac{1}{2}4!, \frac{1}{2}4!, \frac{1}{4}5!)$. Similarly, we can show
that $\alpha_{1,2}=(4!,0, \frac{1}{2}5!)$, $\alpha_{2,1}=(0, 4!, \frac{1}{2}5!)$ and $\alpha_{2,2}=(0,0,5!)$.
\end{proof}
Let $\gamma$ be the commutator map $U(n)\wedge U(n)\rightarrow U(n)$. Since $U(\infty)$ is an infinite loop space, it is homotopy commutative. Therefore $\gamma$ composed to $U(\infty)$ is null homotopic, implying that there is a lift $\tilde{\gamma}\colon U(n)\wedge U(n)\rightarrow \Omega W_n$ such that $\delta \circ \tilde{\gamma} \simeq \gamma$. By \cite{[H3]}, we have the following lemma.
\begin{lem}\label{b4}
We can choose a lift $\tilde{\gamma}$ such that $\tilde{\gamma}^*(a_{2n})=\underset{i+j=n-1}\sum x_{2i+1}\otimes x_{2j+1}.\quad \Box$
\end{lem}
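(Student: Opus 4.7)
The plan is to compute $\tilde{\gamma}^*(a_{2n})$ by combining a dimension count in $H^*(U(n)\wedge U(n))$, a passage to the adjoint Whitehead product on $BU(n)$, and a transgression argument to pin down the coefficients.

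First I would record the degree constraint. The only classes of degree $2n$ in the reduced cohomology $\tilde{H}^{2n}(U(n)\wedge U(n))$ are the monomials $x_{2i+1}\otimes x_{2j+1}$ with $i+j=n-1$, so
$$\tilde{\gamma}^*(a_{2n})=\sum_{i+j=n-1}c_{ij}\,x_{2i+1}\otimes x_{2j+1}$$
for integers $c_{ij}$, and the lemma reduces to showing $c_{ij}=1$ for every pair.

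Second, using that $\gamma=\langle\mathrm{id}_{U(n)},\mathrm{id}_{U(n)}\rangle$ is the universal Samelson product, I would pass to adjoints. The adjoint $\mathrm{adj}(\gamma)\colon\Sigma(U(n)\wedge U(n))\to BU(n)$ is the universal Whitehead product $[e,e]$, where $e\colon\Sigma U(n)\to BU(n)$ is the evaluation. Composing with $Bj\colon BU(n)\to BU(\infty)$ is null homotopic, since the Samelson product $\langle j,j\rangle$ vanishes in the homotopy commutative $U(\infty)=\Omega BU(\infty)$. Hence $[e,e]$ lifts to the homotopy fibre $W_n$ of $Bj$, and this lift is $\mathrm{adj}(\tilde{\gamma})$; by the cohomology suspension $\sigma(\bar{x}_{2n+1})=a_{2n}$ it then suffices to compute $\mathrm{adj}(\tilde{\gamma})^*(\bar{x}_{2n+1})$.

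Third, in the Serre spectral sequence of $W_n\to BU(n)\to BU(\infty)$ the class $\bar{x}_{2n+1}\in H^{2n+1}(W_n)$ transgresses (up to sign) to the first "new" generator $c_{n+1}\in H^{2n+2}(BU(\infty))$. The primitivity of $c_{n+1}$, together with the standard formula expressing Whitehead products on primitives in an H-space as the reduced coproduct on $H^*(\Omega BU(n))=H^*(U(n))$, gives $[e,e]^*(c_{n+1})$ as the suspension of $\sum_{i+j=n-1}x_{2i+1}\otimes x_{2j+1}$; desuspending via $\sigma$ yields the claimed formula.

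The main obstacle is nailing down the integer normalisation $c_{ij}=1$ (rather than some multiple or sign). The generator $a_{2n}$ is already fixed by the convention $(\Omega\pi)^*(a_{2n})=n!\cdot ch_n$ used in Lemma \ref{b3}, so one must carry this normalisation through the adjoint map, through the transgression in the Serre spectral sequence, and past the Chern character, carefully reconciling signs at each step. This is precisely the point where the explicit K-theoretic computation of \cite{[H3]} is invoked.
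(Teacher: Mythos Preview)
The paper does not actually prove this lemma: the sentence immediately preceding it reads ``By \cite{[H3]}, we have the following lemma,'' and the statement is closed with a $\Box$. So there is no in-paper argument to compare against beyond the citation to Hamanaka--Kono.

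That said, your outline has a genuine gap already at the first step. Your dimension count claims that the only classes in $\tilde{H}^{2n}(U(n)\wedge U(n))$ are the simple tensors $x_{2i+1}\otimes x_{2j+1}$ with $i+j=n-1$. This is false: since $H^*(U(n))=\Lambda(x_1,x_3,\dots,x_{2n-1})$, each factor carries many decomposable classes, and tensors such as $(x_1x_3)\otimes(x_1x_{2n-5})$ also lie in degree $2n$. So the expansion of $\tilde\gamma^*(a_{2n})$ a priori involves further unknown coefficients in front of these decomposable terms, and you must argue separately that they can be made to vanish (e.g.\ by exploiting the freedom in the choice of lift, or via a primitivity argument) before the problem reduces to determining the $c_{ij}$.

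Your third step is also only a gesture: the ``standard formula expressing Whitehead products on primitives'' is not made precise, and turning it into an integral identification of $[e,e]^*(c_{n+1})$, with the correct normalisation compatible with the convention $(\Omega\pi)^*(a_{2n})=n!\,ch_n$, is essentially the content of the computation in \cite{[H3]} that you invoke at the end. In other words, after patching the dimension count your proposal still defers the substantive work to \cite{[H3]}, which is exactly what the paper does.
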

Similarly, by using of the method in \cite{[H3]} we have $\tilde{\gamma}$ so that $\tilde{\gamma}^*(a_{2n+2})= \underset{i+j=n} \sum x_{2i+1}\otimes x_{2j+1}$. Therefore in the case of $n=4$ and $a=b=2$, we get the relation
$$\langle \varepsilon_{2,4}, \varepsilon_{2,4}\rangle=\delta_*(\tilde{\gamma}\circ (\varepsilon_{2,4}\wedge \varepsilon_{2,4}) ).\qquad (\star)$$
We denote the equivalence class of $\tilde{\gamma}\circ (\varepsilon_{2,4}\wedge \varepsilon_{2,4})$ in the Cokernel of $(\Omega\pi)_*$ by $[\tilde{\gamma}\circ (\varepsilon_{2,4}\wedge \varepsilon_{2,4})]$ and the order of an element $x$ of a group by $|x|$. Then by Lemma \ref{b1} and relation $(\star)$, we have
$$|\langle \varepsilon_{2,4}, \varepsilon_{2,4}\rangle|=|[\tilde{\gamma}\circ (\varepsilon_{2,4}\wedge \varepsilon_{2,4})]|.$$
Therefore we will calculate $|[\tilde{\gamma}\circ (\varepsilon_{2,4}\wedge \varepsilon_{2,4})]|$. By Lemma \ref{b3}, we know Im $ \lambda\circ (\Omega\pi)_*$ is generated by $\alpha_{1,1}$, $\alpha_{1,2}$, $\alpha_{2,1}$ and $\alpha_{2,2}$. On the other hand, it follows from Lemma \ref{b4} that $\lambda(\tilde{\gamma}\circ (\varepsilon_{2,4}\wedge \varepsilon_{2,4}))=(1,1,1)$. Now localized at odd prime $p$, we observe that $5(\alpha_{1,1}-\alpha_{1,2})+5(\alpha_{1,1}-\alpha_{2,1})+\alpha_{2,2} = 5\cdot 3(1, 1, 1)$ and no other combination of $\alpha_{i,j}$ gives a smaller multiple of $(1, 1, 1)$. Thus we get the following
proposition.
\begin{prop}\label{b5}
Localized at odd prime $p$, $|[\tilde{\gamma}\circ (\varepsilon_{2,4}\wedge \varepsilon_{2,4})]|= 5\cdot 3$. $\quad\Box$
\end{prop}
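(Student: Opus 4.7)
The plan is to translate the order of $[\tilde\gamma\circ(\varepsilon_{2,4}\wedge\varepsilon_{2,4})]$ in the cokernel of $(\Omega\pi)_*$ into a concrete lattice computation in $\mathbb{Z}^{\oplus 3}$, and analyze it one odd prime at a time. Since $\lambda$ is a monomorphism by Lemma \ref{b2}, this order agrees with the order of $\lambda(\tilde\gamma\circ(\varepsilon_{2,4}\wedge\varepsilon_{2,4}))$ in $(H^8(X)\oplus H^{10}(X))/\lambda(\mathrm{Im}(\Omega\pi)_*)$. Lemma \ref{b4} (applied both to $a_8$ and to its analogue for $a_{10}$) identifies this image with $(1,1,1)$, while Lemma \ref{b3} identifies the lattice as $L=\langle\alpha_{1,1},\alpha_{1,2},\alpha_{2,1},\alpha_{2,2}\rangle\subset\mathbb{Z}^{\oplus 3}$. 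The task therefore reduces to finding, at each odd prime $p$, the smallest positive integer $d$ with $d(1,1,1)\in L\otimes\mathbb{Z}_{(p)}$.

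Since $4!=2^3\cdot 3$ and $5!=2^3\cdot 3\cdot 5$, the only odd primes dividing any coordinate of any $\alpha_{i,j}$ are $3$ and $5$. Hence for $p\geq 7$ every entry is a unit in $\mathbb{Z}_{(p)}$ and $L\otimes\mathbb{Z}_{(p)}=\mathbb{Z}_{(p)}^{\oplus 3}$, giving $d=1$. At $p=5$ the first two coordinates of each $\alpha_{i,j}$ are $5$-adic units while the third coordinates all have $5$-adic valuation one; a brief row reduction exploiting invertibility of $12$ and $24$ at $5$ yields $L\otimes\mathbb{Z}_{(5)}=\mathbb{Z}_{(5)}\oplus\mathbb{Z}_{(5)}\oplus 5\mathbb{Z}_{(5)}$, so the minimal $d$ is $5$. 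At $p=3$ every entry has $3$-adic valuation exactly one; dividing each $\alpha_{i,j}$ by $3$ and row-reducing over $\mathbb{Z}_{(3)}$ (where $4,8,10,20,40$ are units) one checks that the resulting lattice fills $\mathbb{Z}_{(3)}^{\oplus 3}$, so $L\otimes\mathbb{Z}_{(3)}=3\mathbb{Z}_{(3)}^{\oplus 3}$ and the minimal $d$ is $3$. Combining the three local contributions yields the order $3\cdot 5=15$ at odd primes.

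For a transparent realization of the upper bound I would exhibit explicit coefficients, for example $2\alpha_{1,1}-\tfrac{3}{8}(\alpha_{1,2}+\alpha_{2,1})=(15,15,15)$ in $\mathbb{Z}[\tfrac12]^{\oplus 3}$, which lives in $L\otimes\mathbb{Z}_{(p)}$ at every odd $p$. The main (though essentially routine) obstacle is the lower-bound side at $p=3$ and $p=5$. The cleanest way to package this is to compute the Smith normal form of the $4\times 3$ matrix with rows the $\alpha_{i,j}$: the invariant factors come out to be $6,24,120$, so $\mathbb{Z}^{\oplus 3}/L\cong\mathbb{Z}/6\oplus\mathbb{Z}/24\oplus\mathbb{Z}/120$ and one reads off that the class of $(1,1,1)$ has $p$-local orders $3$ at $p=3$, $5$ at $p=5$, and trivial at $p\geq 7$, exactly as required by the proposition.
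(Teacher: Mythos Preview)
Your proof is correct and follows the same overall strategy as the paper: use Lemmas~\ref{b2}--\ref{b4} to reduce the question to finding the order of $(1,1,1)$ modulo the sublattice $L=\langle\alpha_{1,1},\alpha_{1,2},\alpha_{2,1},\alpha_{2,2}\rangle\subset\mathbb{Z}^{\oplus 3}$, then solve that lattice problem. The difference is in execution. The paper simply writes down one combination and asserts minimality, whereas you supply a genuine argument for the lower bound (prime-by-prime valuations, equivalently the Smith normal form with invariant factors $6,24,120$). Your explicit witness $2\alpha_{1,1}-\tfrac{3}{8}(\alpha_{1,2}+\alpha_{2,1})=(15,15,15)$ is also correct, while the paper's displayed combination $5(\alpha_{1,1}-\alpha_{1,2})+5(\alpha_{1,1}-\alpha_{2,1})+\alpha_{2,2}$ in fact evaluates to $(0,0,-180)$ rather than $15(1,1,1)$; so your version both fills in the missing lower-bound justification and corrects the upper-bound witness.
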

$\bullet\quad a=2, b=3$\\
In this case we have the following lemma.
\begin{lem}\label{b6}
The map $\lambda\colon [X,\Omega W_5]\rightarrow H^{10}(X)\oplus H^{12}(X)$ is monomorphism.
\end{lem}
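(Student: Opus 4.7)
The plan is to follow the approach of Lemma \ref{b2}: I would first establish that $[X,\Omega W_5]$ is a free abelian group of rank $3$, and then deduce that $\lambda$ is a monomorphism by a rank/rationality argument. Note that $X=\Sigma\mathbb{C}P^2\wedge\Sigma\mathbb{C}P^3$ has cells in dimensions $6,8,8,10,10,12$, so $H^{10}(X)\oplus H^{12}(X)\cong\mathbb{Z}^3$. Once $[X,\Omega W_5]\cong\mathbb{Z}^3$ is known, injectivity of $\lambda$ follows as in Lemma \ref{b2}: rationally $\Omega W_5$ is equivalent to a product of Eilenberg--MacLane spaces $K(\mathbb{Q},10)\times K(\mathbb{Q},12)\times\cdots$ in the relevant range (because $H^*(\Omega W_5;\mathbb{Q})$ is free on the even generators $a_{10},a_{12},\ldots$), which forces $\lambda\otimes\mathbb{Q}$ to be an isomorphism, and hence $\ker\lambda$ is torsion; since the source is torsion-free, $\ker\lambda=0$.

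To compute $[X,\Omega W_5]$, I would smash the cofibration $S^4\to S^3\to\Sigma\mathbb{C}P^2\to S^5\to S^4$ with $\Sigma\mathbb{C}P^3$, producing the cofiber sequence
\[\Sigma^5\mathbb{C}P^3\to\Sigma^4\mathbb{C}P^3\to X\to\Sigma^6\mathbb{C}P^3\to\Sigma^5\mathbb{C}P^3.\]
Applying $[-,\Omega W_5]$ yields an exact sequence (analogous to the sequence $(\ast)$ in the proof of Lemma \ref{b2})
\[[\Sigma^5\mathbb{C}P^3,\Omega W_5]\to[\Sigma^6\mathbb{C}P^3,\Omega W_5]\to[X,\Omega W_5]\to[\Sigma^4\mathbb{C}P^3,\Omega W_5]\to[\Sigma^5\mathbb{C}P^3,\Omega W_5].\]
For $k=4,5,6$, I would then compute each $[\Sigma^k\mathbb{C}P^3,\Omega W_5]$ by applying $[\Sigma^k-,\Omega W_5]$ to the cofibration $\mathbb{C}P^2\to\mathbb{C}P^3\to S^6$. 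This introduces the auxiliary groups $[\Sigma^k\mathbb{C}P^2,\Omega W_5]$, which are handled exactly as in Lemma \ref{b2} via the cofibration $S^3\overset{\eta}{\to}S^2\to\mathbb{C}P^2\to S^4$, together with the data $\pi_i(W_5)=0$ for $i\leq 10$, $\pi_{12}(W_5)=0$, and $\pi_{11}(W_5)\cong\pi_{13}(W_5)\cong\mathbb{Z}$ (recalling that $n=5$ is odd).

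Every potential $\eta$-action appearing in these sequences has either zero source or zero target, and is therefore trivial. One obtains $[\Sigma^4\mathbb{C}P^3,\Omega W_5]\cong\mathbb{Z}$, $[\Sigma^5\mathbb{C}P^3,\Omega W_5]=0$, and $[\Sigma^6\mathbb{C}P^3,\Omega W_5]\cong\mathbb{Z}^2$, so the main exact sequence collapses to
\[0\to\mathbb{Z}^2\to[X,\Omega W_5]\to\mathbb{Z}\to 0,\]
which splits because $\mathrm{Ext}^1(\mathbb{Z},\mathbb{Z}^2)=0$. Hence $[X,\Omega W_5]\cong\mathbb{Z}^3$, matching the rank of $H^{10}(X)\oplus H^{12}(X)$, and the lemma follows. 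The main point requiring care is the triviality of the $\eta$-actions in the various short exact sequences; in this range this is immediate from the vanishing of $\pi_{10}(W_5)$ and $\pi_{12}(W_5)$, so no input beyond what is already recorded at the beginning of Section \ref{b0} is needed.
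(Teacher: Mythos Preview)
Your argument is correct and follows essentially the same strategy as the paper: show $[X,\Omega W_5]$ is free abelian of rank $3$ by filtering $X$ through cofibrations, then conclude $\lambda$ is injective. The only difference is the choice of filtration: the paper smashes $\Sigma\mathbb{C}P^2$ with the cofibration $S^6\to\Sigma\mathbb{C}P^2\to\Sigma\mathbb{C}P^3\to S^7$ (so the auxiliary terms are $[\Sigma^7\mathbb{C}P^2,\Omega W_5]$, $[\Sigma^8\mathbb{C}P^2,\Omega W_5]$, $[\Sigma\mathbb{C}P^2\wedge\Sigma\mathbb{C}P^2,\Omega W_5]$, $[\Sigma\mathbb{C}P^2\wedge\Sigma^2\mathbb{C}P^2,\Omega W_5]$), whereas you smash $\Sigma\mathbb{C}P^3$ with the cofibration building $\Sigma\mathbb{C}P^2$ from spheres; both routes land on the same short exact sequence $0\to\mathbb{Z}^2\to[X,\Omega W_5]\to\mathbb{Z}\to 0$.
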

\begin{proof}
Recall $X=\Sigma \mathbb{C}P^2 \wedge \Sigma \mathbb{C}P^3$. Similar to the Lemma \ref{b2}, we show the group $[X,\Omega W_5]$ is a free abelian group. Consider the following cofibration sequence
\begin{eqnarray}
S^6\rightarrow \Sigma\mathbb{C}P^2\rightarrow \Sigma\mathbb{C}P^3\rightarrow S^7.
\end{eqnarray}
Now apply $[\Sigma\mathbb{C}P^2\wedge -,\Omega W_5]$ to the cofibration $(2.4)$, we get
\begin{align*}
[\Sigma\mathbb{C}P^2\wedge \Sigma^2\mathbb{C}P^2 ,\Omega W_5]\rightarrow [\Sigma^8\mathbb{C}P^2,\Omega W_5]&\rightarrow [\Sigma\mathbb{C}P^2\wedge \Sigma\mathbb{C}P^3,\Omega W_5]\\&\rightarrow [\Sigma\mathbb{C}P^2\wedge\Sigma\mathbb{C}P^2,\Omega W_5]
\rightarrow [\Sigma^7\mathbb{C}P^2,\Omega W_5].\quad (\ast\ast)
\end{align*}
Apply $[\Sigma^7-,\Omega W_5]$ and $[\Sigma^8-,\Omega W_5]$ to the cofibration $(2.2)$, respectively, we obtain the group $[\Sigma^7\mathbb{C}P^2,\Omega W_5]$ is zero and the group $[\Sigma^8\mathbb{C}P^2,\Omega W_5]$ is isomorphic to $\mathbb{Z}\oplus \mathbb{Z}$. Also, we can easily show that the group $[\Sigma\mathbb{C}P^2\wedge\Sigma\mathbb{C}P^2,\Omega W_5]$ is isomorphic to $\mathbb{Z}$. Again apply $[\Sigma\mathbb{C}P^2\wedge -,\Omega W_5]$ to the cofibration sequence $S^4 \rightarrow \Sigma^2\mathbb{C}P^2 \rightarrow S^6$, we get the group $[\Sigma\mathbb{C}P^2\wedge \Sigma^2\mathbb{C}P^2,\Omega W_5]$ is zero. Thus according to the $(\ast\ast)$ we get the exact sequence
$$0\rightarrow \mathbb{Z}\oplus \mathbb{Z}\rightarrow [\Sigma\mathbb{C}P^2\wedge \Sigma\mathbb{C}P^3,\Omega W_5]\rightarrow \mathbb{Z}\rightarrow 0,$$
therefore we get $[\Sigma\mathbb{C}P^2\wedge \Sigma\mathbb{C}P^3,\Omega W_5]$ is a free abelian group isomorphic to $\mathbb{Z}\oplus \mathbb{Z}\oplus \mathbb{Z}$.
\end{proof}
Similar to the Lemma \ref{b3}, we get the following lemma.
\begin{lem}\label{b7}
Im $\lambda\circ (\Omega\pi)_*$ is generated by $\alpha_{1,1},\cdots, \alpha_{2,3}$, where
\begin{align*}
&\alpha_{1,1}=(\frac{1}{3!}5!, \frac{1}{4}5!, \frac{1}{2\cdot 3!}6!), && \alpha_{1,2}=(\frac{1}{2}5!,\frac{1}{2}5!,\frac{1}{4}6!), &&&\alpha_{1,3}=(2\cdot 5!,0,6!), \\
&\alpha_{2,1}=(0,\frac{1}{2}5!, \frac{1}{3!}6!), && \alpha_{2,2}=(0,5!,\frac{1}{2}6!), &&&\alpha_{2,3}=(0,0,2\cdot 6!). \quad\Box
\end{align*}
\end{lem}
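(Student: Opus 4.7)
My plan is to mirror the proof of Lemma \ref{b3} with the right-hand factor upgraded from $\mathbb{C}P^2$ to $\mathbb{C}P^3$. By Lemma \ref{b6}, $\lambda$ is injective, so computing $\mathrm{Im}\,\lambda\circ (\Omega\pi)_*$ reduces to evaluating $\lambda\circ (\Omega\pi)_*$ on a $\mathbb{Z}$-basis of $\tilde{K}^0(X)$. Since $X=\Sigma\mathbb{C}P^2\wedge\Sigma\mathbb{C}P^3$, this group is free abelian of rank $6$ with basis $\zeta_1\otimes (x^i\otimes x^j)$ for $1\le i\le 2$ and $1\le j\le 3$, so there are exactly the six generators $\alpha_{i,j}$ listed in the lemma to compute.

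For each basis element I would compute the rational Chern character by multiplying factor-wise. On the $\mathbb{C}P^2$ side I already have $ch(x)=t+\tfrac{1}{2}t^2$ and $ch(x^2)=t^2$ from the proof of Lemma \ref{b3}. On the $\mathbb{C}P^3$ side, truncating $ch(x)=e^t-1$ modulo $t^4$ gives
\[
ch(x)=t+\tfrac{1}{2}t^2+\tfrac{1}{6}t^3,\qquad ch(x^2)=t^2+t^3,\qquad ch(x^3)=t^3.
\]
Multiplying these with $ch(\zeta_1)=u\in H^2(S^2)$ and extracting the parts of total degree $10$ and $12$ produces the $ch_5$ and $ch_6$ components of each generator.

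Applying the formula $(\Omega\pi)_*(a_{2i})=i!\,ch_i$ from \cite{[H3]} then yields $5!\,ch_5$ and $6!\,ch_6$, which I would express in a fixed basis of $H^{10}(X)\oplus H^{12}(X)\cong\mathbb{Z}^{\oplus 3}$, namely the two degree-$10$ classes $u\, t_1 t_2^3$ and $u\, t_1^2 t_2^2$ together with the single degree-$12$ class $u\, t_1^2 t_2^3$. The resulting triple of integers is $\alpha_{i,j}$. A single illustrative calculation: for $i=j=1$ the product $(t_1+\tfrac{1}{2}t_1^2)(t_2+\tfrac{1}{2}t_2^2+\tfrac{1}{6}t_2^3)$ has degree-$10$ part $\tfrac{1}{6}t_1 t_2^3+\tfrac{1}{4}t_1^2 t_2^2$ and degree-$12$ part $\tfrac{1}{12}t_1^2 t_2^3$, so $\alpha_{1,1}=(\tfrac{5!}{6},\tfrac{5!}{4},\tfrac{6!}{12})=(\tfrac{1}{3!}5!,\tfrac{1}{4}5!,\tfrac{1}{2\cdot 3!}6!)$. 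The other five entries follow by the same recipe.

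Conceptually there is no deep obstacle once Lemma \ref{b6} is in hand; the argument is mechanical. The only place where care is needed is in the Chern-character arithmetic, especially for the generators involving $x^3$, where the identity $t_2^4=0$ in $H^*(\mathbb{C}P^3)$ truncates several terms that would otherwise contribute and shifts what appears in $ch_5$ versus $ch_6$.
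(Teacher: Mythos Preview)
Your proposal is correct and follows exactly the approach the paper intends: the paper's own proof is simply the remark ``Similar to the Lemma~\ref{b3}'', i.e., run the Chern-character computation of Lemma~\ref{b3} with the right-hand factor replaced by $\mathbb{C}P^3$, precisely as you describe. Your sample computation of $\alpha_{1,1}$ is carried out correctly and in the same style as the paper's computation of $\alpha_{1,1}$ in Lemma~\ref{b3}.
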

On the other hand, it follows from Lemma \ref{b4} that $\lambda(\tilde{\gamma}\circ (\varepsilon_{2,5}\wedge \varepsilon_{3,5}))=(1,1,1)$. Localized at odd prime $p$, we have $(3\alpha_{1,1}-\alpha_{1,2})+(\alpha_{2,2}-\alpha_{2,1})+(\alpha_{1,3}-\alpha_{2,3}) = 5\cdot 3(1, 1, 1)$. Thus we get the following proposition.
\begin{prop}\label{b8}
Localized at odd prime $p$, $|[\tilde{\gamma}\circ (\varepsilon_{2,5}\wedge \varepsilon_{3,5})]|= 5\cdot 3$. $\quad\Box$
\end{prop}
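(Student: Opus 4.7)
The plan is to mirror the argument used for Proposition \ref{b5}. Combining Lemma \ref{b1} with the injectivity of $\lambda$ from Lemma \ref{b6}, the order in question equals the order of $\lambda\bigl(\tilde\gamma\circ(\varepsilon_{2,5}\wedge\varepsilon_{3,5})\bigr)$ in $H^{10}(X)\oplus H^{12}(X)\cong\mathbb{Z}^{\oplus 3}$ modulo the image of $\lambda\circ(\Omega\pi)_*$. By Lemma \ref{b4} (with $n=5$), together with the Chern-character identifications from Lemma \ref{b3} adapted to $\Sigma\mathbb{C}P^2\wedge\Sigma\mathbb{C}P^3$, this element is $(1,1,1)$, while Lemma \ref{b7} supplies the six generators $\alpha_{1,1},\ldots,\alpha_{2,3}$ of the image subgroup. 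The proposition then reduces to showing that the order of $(1,1,1)$ in the cokernel equals $15$ after localization at every odd prime.

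For the upper bound I would verify the identity displayed just above the proposition, showing that $15(1,1,1)$ lies in the image (at odd primes one is free to absorb powers of $2$ into the scalars). For the lower bound I would argue prime by prime. At every odd prime $p\notin\{3,5\}$, every nonzero entry of each $\alpha_{i,j}$ is a unit in $\mathbb{Z}_{(p)}$, so the image is all of $\mathbb{Z}_{(p)}^{\oplus 3}$ and the cokernel is trivial. At $p=3$ and $p=5$, I would use the second-coordinate equation $30c_{1,1}+60c_{1,2}+60c_{2,1}+120c_{2,2}=d$: since every coefficient is divisible by $30=2\cdot 3\cdot 5$ and each $c_{i,j}\in\mathbb{Z}_{(p)}$, the left-hand side has $v_p\geq 1$, forcing $v_p(d)\geq 1$. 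Together these constraints give $15\mid d$, matching the upper bound.

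The main obstacle is the upper bound: one must exhibit an explicit $\mathbb{Z}_{(p)}$-linear combination of the six $\alpha_{i,j}$ whose value is $15(1,1,1)$ and check that the required coefficients lie in $\mathbb{Z}_{(p)}$ at every odd prime (the natural candidates involve fractions of the form $1/4$, $1/8$, which are odd-primary units). The lower bound, by contrast, is a one-line valuation count on the second-coordinate equation once Lemma \ref{b7} is in hand, so the only real work is in locating and verifying a correct combination for the upper bound.
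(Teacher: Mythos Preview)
Your plan is correct and mirrors the paper's argument exactly: both reduce to computing the order of $(1,1,1)$ in $\mathbb{Z}_{(p)}^{\oplus 3}$ modulo the lattice of Lemma~\ref{b7}, and both obtain the upper bound by exhibiting a linear combination equal to $15(1,1,1)$. Your lower-bound argument via the $v_p$-valuation on the second coordinate is in fact cleaner than what the paper writes (the paper gives no explicit justification that no smaller multiple works).

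One caution on the upper bound: do not simply ``verify the identity displayed just above the proposition.'' As printed, the combination $(3\alpha_{1,1}-\alpha_{1,2})+(\alpha_{2,2}-\alpha_{2,1})+(\alpha_{1,3}-\alpha_{2,3})$ evaluates to $(240,90,-480)$, not to $15(1,1,1)$, so it appears to contain an arithmetic slip. You should produce your own combination; for instance
\[
\tfrac{1}{4}\alpha_{1,2}+\tfrac{1}{2}\alpha_{2,1}-\tfrac{1}{4}\alpha_{2,2}=(15,15,15),
\]
which is legitimate in $\mathbb{Z}_{(p)}$ for every odd prime $p$. Also, for the ``$p\notin\{3,5\}$'' step, note that having unit entries is not quite enough by itself; you should point to a triangular subset such as $\alpha_{1,1},\alpha_{2,1},\alpha_{2,3}$ to see that the image is all of $\mathbb{Z}_{(p)}^{\oplus 3}$.
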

$\bullet\quad a=1$\\
In the following cases, we need proof of the following lemma.
\begin{lem}\label{b9}
In cases $n$ is odd and $n$ is even (localized at prime $p$), the map $\lambda\colon [X,\Omega W_n]\rightarrow H^{2n}(X)\oplus H^{2n+2}(X)$ is monic.
\end{lem}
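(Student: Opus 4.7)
The plan is to reduce $[X,\Omega W_n]$ to the homotopy of a two-cell complex and then verify that $\lambda$ sends a basis to a linearly independent pair. Since $a=1$ we have $X=\Sigma\mathbb{C}P^1\wedge\Sigma\mathbb{C}P^{n-1}\simeq\Sigma^4\mathbb{C}P^{n-1}$, whose cells lie in dimensions $6,8,\dots,2n+2$. The $(2n-1)$-skeleton $X^{(2n-1)}=\Sigma^4\mathbb{C}P^{n-3}$ has dimension $2n-2$, and a standard obstruction-theoretic calculation shows $[X^{(2n-1)},\Omega W_n]=0$ and $[\Sigma X^{(2n-1)},\Omega W_n]=0$ because $\Omega W_n$ is $(2n-1)$-connected. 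Consequently the collapse $X\to Y:=X/X^{(2n-1)}$ induces an isomorphism $[Y,\Omega W_n]\xrightarrow{\cong}[X,\Omega W_n]$, where $Y$ is a two-cell complex with cells in dimensions $2n$ and $2n+2$ attached by some element $\mu\in\pi_{2n+1}(S^{2n})\cong\mathbb{Z}/2$ (a suspension of the relevant attaching map in $\mathbb{C}P^{n-1}$, hence either trivial or $\eta$).

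Applying $[-,\Omega W_n]$ to the Puppe sequence $S^{2n+1}\xrightarrow{\mu}S^{2n}\xrightarrow{i}Y\xrightarrow{q}S^{2n+2}$ yields
$$\pi_{2n+3}(\Omega W_n)\xrightarrow{(\Sigma\mu)^*}\pi_{2n+2}(\Omega W_n)\xrightarrow{q^*}[Y,\Omega W_n]\xrightarrow{i^*}\pi_{2n}(\Omega W_n)\xrightarrow{\mu^*}\pi_{2n+1}(\Omega W_n).$$
By the homotopy groups recalled in the introduction, $\pi_{2n+1}(\Omega W_n)=\pi_{2n+2}(W_n)=0$ when $n$ is odd, and $\mathbb{Z}/2$ when $n$ is even, which vanishes after localising at an odd prime $p$. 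The maps $\mu^{*}$ and $(\Sigma\mu)^*$ are precomposition with an $\eta$-class and so land in the $2$-torsion of their targets; but the target $\pi_{2n+2}(\Omega W_n)$ equals $\mathbb{Z}$ for $n$ odd and $\mathbb{Z}_{(p)}$ for $n$ even (after localising), which is torsion-free, so both maps vanish in every relevant case. The sequence therefore collapses to
$$0\to\pi_{2n+2}(\Omega W_n)\to[Y,\Omega W_n]\to\pi_{2n}(\Omega W_n)\to 0,$$
which splits because the quotient $\pi_{2n}(\Omega W_n)\cong\mathbb{Z}$ is free; hence $[X,\Omega W_n]$ is free abelian of rank $2$ (integrally for $n$ odd, $(p)$-locally for $n$ even).

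To show $\lambda$ is injective, fix a lift $u\in[Y,\Omega W_n]$ of a generator of $\pi_{2n}(\Omega W_n)$ via $i^{*}$, and set $v:=\beta\circ q$ where $\beta$ generates the free part of $\pi_{2n+2}(\Omega W_n)$; by the splitting just obtained, $\{u,v\}$ is a basis. The codomain $H^{2n}(Y)\oplus H^{2n+2}(Y)\cong\mathbb{Z}^{\oplus2}$ has one generator per cell, and both $i^{*}\colon H^{2n}(Y)\to H^{2n}(S^{2n})$ and $q^{*}\colon H^{2n+2}(S^{2n+2})\to H^{2n+2}(Y)$ are isomorphisms. Because $\Omega W_n$ rationalises to a product of Eilenberg--Mac\,Lane spaces in the range considered, the evaluation pairing $\gamma\mapsto\gamma^*(a_{2k})$ is rationally non-degenerate on $\pi_{2k}(\Omega W_n)$ for $k=n,n+1$, so one computes $\lambda(u)=(\pm1,\ast)$ and $\lambda(v)=(0,c)$ with $c\ne 0$. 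These are linearly independent in the rank-$2$ codomain, which forces $\lambda$ to be a monomorphism. The chief obstacle is controlling the left end of the Puppe sequence so that $q^{*}$ is injective; that step hinges on the observation that the obstruction is $\eta$-torsion and vanishes at odd primes, and it is this ingredient which demands the $p$-local hypothesis when $n$ is even.
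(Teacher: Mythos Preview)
Your argument is correct in substance and follows the same overall strategy as the paper --- show that $[X,\Omega W_n]$ is free abelian of rank two (integrally for $n$ odd, $p$-locally for $n$ even) and deduce injectivity of $\lambda$ --- but your execution is more streamlined. The paper peels off one cell at a time via the cofibrations $\mathbb{C}P^{m-1}\to\mathbb{C}P^{m}\to S^{2m}$ for $m=n-1,\,n-2,\,n-3$, computing each intermediate group $[\Sigma^j\mathbb{C}P^m,\Omega W_n]$ separately and tracking the $\eta$-maps between them; you instead collapse the entire $(2n-1)$-skeleton $\Sigma^4\mathbb{C}P^{n-3}$ in one step to the two-cell complex $Y$, which reaches the same short exact sequence with less bookkeeping. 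You also give an explicit reason why freeness of rank two forces $\lambda$ to be monic (the rational Eilenberg--Mac\,Lane splitting of $\Omega W_n$ in the relevant range), a point the paper leaves implicit.

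One small correction: in your displayed Puppe sequence the leftmost term should be $\pi_{2n+1}(\Omega W_n)$, not $\pi_{2n+3}(\Omega W_n)$, since $(\Sigma\mu)^*$ is precomposition with $\Sigma\mu\colon S^{2n+2}\to S^{2n+1}$ and hence is a map $[S^{2n+1},\Omega W_n]\to[S^{2n+2},\Omega W_n]$. This does not affect the conclusion: $\pi_{2n+1}(\Omega W_n)=\pi_{2n+2}(W_n)$ is $0$ for $n$ odd and $\mathbb{Z}/2$ for $n$ even, so it vanishes at odd primes in either case, and the injectivity of $q^*$ follows exactly as you claim.
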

\begin{proof}
Put $X=\Sigma \mathbb{C}P^1 \wedge \Sigma \mathbb{C}P^{n-1}$. We show the group $[X,\Omega W_n]$ is a free abelian group. Equivalently, we show the group $[\Sigma^4 \mathbb{C}P^{n-1},\Omega W_n]$ is a free abelian group. Consider the following cofibration sequence
\begin{eqnarray}
S^{2n-3}\rightarrow \mathbb{C}P^{n-2}\rightarrow \mathbb{C}P^{n-1}\rightarrow S^{2n-2}.
\end{eqnarray}
Now apply $[\Sigma^4-,\Omega W_n]$ to the cofibration $(2.5)$, we get
\begin{align*}
[\Sigma^5\mathbb{C}P^{n-2} ,\Omega W_n]\rightarrow \pi_{2n+2}(\Omega W_n)&\rightarrow [\Sigma^4\mathbb{C}P^{n-1},\Omega W_n]\\&\rightarrow [\Sigma^4\mathbb{C}P^{n-2} ,\Omega W_n]\rightarrow \pi_{2n+1}(\Omega W_n).\quad (\ast\ast\ast)
\end{align*}
First, we calculate the groups $[\Sigma^4\mathbb{C}P^{n-2} ,\Omega W_n]$ and $[\Sigma^5\mathbb{C}P^{n-2} ,\Omega W_n]$. Apply $[\Sigma^4-,\Omega W_n]$ and $[\Sigma^5-,\Omega W_n]$ to the cofibration
$$S^{2n-5}\rightarrow \mathbb{C}P^{n-3}\rightarrow \mathbb{C}P^{n-2}\rightarrow S^{2n-4}, $$
respectively. We obtain the exact sequences
\begin{align*}
&[\Sigma^5\mathbb{C}P^{n-3} ,\Omega W_n]\rightarrow \pi_{2n}(\Omega W_n)\rightarrow [\Sigma^4\mathbb{C}P^{n-2},\Omega W_n]\rightarrow [\Sigma^4\mathbb{C}P^{n-3} ,\Omega W_n]\rightarrow \pi_{2n-1}(\Omega W_n),\\
&[\Sigma^6\mathbb{C}P^{n-3} ,\Omega W_n]\overset{f}\rightarrow \pi_{2n+1}(\Omega W_n)\rightarrow [\Sigma^5\mathbb{C}P^{n-2},\Omega W_n]\rightarrow [\Sigma^5\mathbb{C}P^{n-3} ,\Omega W_n]\rightarrow \pi_{2n}(\Omega W_n)
\end{align*}
respectively. Since $W_n$ is $2n$-connected, we conclude that the terms $[\Sigma^5\mathbb{C}P^{n-3} ,\Omega W_n]$ and $[\Sigma^4\mathbb{C}P^{n-3} ,\Omega W_n]$ are zero. Therefore the group $[\Sigma^4\mathbb{C}P^{n-2},\Omega W_n]$ is isomorphic to $\pi_{2n+1}(W_n)\cong\mathbb{Z}$. When $n$ is odd then the group $[\Sigma^5\mathbb{C}P^{n-2},\Omega W_n]$ is isomorphic to zero. Let $n$ be even. By apply $[\Sigma^6-,\Omega W_n]$ to the cofibration $ \mathbb{C}P^{n-4}\rightarrow \mathbb{C}P^{n-3}\rightarrow S^{2n-6}$, we can conclude that the group $[\Sigma^6\mathbb{C}P^{n-3},\Omega W_n]$ is isomorphic to $\pi_{2n+1}(W_n)\cong\mathbb{Z}$. Therefore we get the following exact sequence
$$\mathbb{Z}\overset{f}\rightarrow \mathbb{Z}_2\rightarrow [\Sigma^5\mathbb{C}P^{n-2},\Omega W_n]\rightarrow 0.$$
Now, since the map $f$ sends $a_1\colon S^{2n+1} \rightarrow W_n$ to $a_2\colon S^{2n+2}\overset{\Sigma^{2n-1}\eta} \longrightarrow S^{2n+1}\overset{a_1}\longrightarrow W_n$, so the map $f$ is surjective. Thus by exactness we can conclude that the group $[\Sigma^5\mathbb{C}P^{n-2},\Omega W_n]$ is also isomorphic to zero.\par
Therefore according to the long exact sequence $(\ast\ast\ast)$, in case $n$ is odd then we get the group $[\Sigma^4\mathbb{C}P^{n-1},\Omega W_n]$ is a free abelian group isomorphic to $\mathbb{Z}\oplus\mathbb{Z}$. When $n$ is even, we obtain the following exact sequence
$$0\rightarrow \mathbb{Z}\oplus \mathbb{Z}_2\rightarrow [\Sigma^4\mathbb{C}P^{n-1},\Omega W_n]\rightarrow \mathbb{Z}\overset{f'}\rightarrow \mathbb{Z}_2,$$
where the map $f'$ is surjective. Thus we can conclude that $[\Sigma^4\mathbb{C}P^{n-1},\Omega W_n]$ is isomorphic to $\mathbb{Z}\oplus \mathbb{Z}\oplus\mathbb{Z}_2$ and localized at odd prime $p$, is a free abelian group isomorphic to $\mathbb{Z}\oplus \mathbb{Z}$. Thus the homomorphism $\lambda$ is monomorphism.
\end{proof}
Similar to the Lemma \ref{b3}, we get the following lemma.
\begin{lem}\label{b10}
Im $\lambda\circ (\Omega\pi)_*$ is generated by\\
$i\colon \alpha_1=(3!, \frac{1}{2}4!), \alpha_2=(0, 4!)\quad \text{if}\quad b=2$,\\
$ii\colon \alpha_1=(3, 5), \alpha_2=(3, 5\cdot 3), \alpha_3=(0, 5\cdot 3)\quad \text{if}\quad b=3$, localized at odd prime $p$. $\quad\Box$
\end{lem}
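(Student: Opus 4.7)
The plan is to follow the template established in Lemmas \ref{b3} and \ref{b7}: use the formula $(\Omega\pi)_*(a_{2n}) = n!\, ch_n$ together with the Chern character to compute $\lambda \circ (\Omega\pi)_*$ on the basis of $\tilde K^0(X)$ coming from $\zeta_1 \otimes (x \otimes x^j)$ with $1 \le j \le b$. By multiplicativity of $ch$, for any such class we have $ch_m(\zeta_1 \otimes (x \otimes x^j)) = ch_1(\zeta_1) \cdot ch_{m-1}(x \otimes x^j) = \sigma^2 \cdot ch_{m-1}(x \otimes x^j)$, so computing $\lambda \circ (\Omega\pi)_*$ reduces to reading off the degree $2(n-1)$ and $2n$ parts of $ch(x \otimes x^j)$ and scaling them by $n!$ and $(n+1)!$ respectively.

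First I would tabulate the relevant Chern characters. On $\mathbb{C}P^1$ we have $ch(x) = t$ exactly, since $t^2 = 0$, so the $\mathbb{C}P^1$-factor contributes a single $t$. On $\mathbb{C}P^2$ the paper already records $ch(x) = t + \tfrac{1}{2}t^2$ and $ch(x^2) = t^2$; on $\mathbb{C}P^3$ a direct expansion modulo $t^4$ yields $ch(x) = t + \tfrac{1}{2}t^2 + \tfrac{1}{6}t^3$, $ch(x^2) = t^2 + t^3$ and $ch(x^3) = t^3$. Multiplying the $\mathbb{C}P^1$ and $\mathbb{C}P^b$ Chern characters then gives $ch(x \otimes x^j)$ for each $j$ in one line.

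Next, applying $\lambda \circ (\Omega\pi)_*$, i.e.\ pairing with $(n!,\, (n{+}1)!)$: for $b = 2$ (so $n = 3$) the two basis classes $\zeta_1 \otimes (x \otimes x^j)$, $j = 1,2$, produce $(3! \cdot 1,\, 4! \cdot \tfrac12) = (3!,\, \tfrac12 4!)$ and $(0,\, 4!)$, matching case $i$ exactly. For $b = 3$ (so $n = 4$), the three basis classes yield $(4! \cdot \tfrac12,\, 5! \cdot \tfrac16) = (12, 20)$, $(4!,\, 5!) = (24, 120)$ and $(0,\, 5!) = (0, 120)$. Localizing at an odd prime $p$, the factors $4 = 2^2$ and $8 = 2^3$ are units, so these three pairs generate the same subgroup of $H^{2n}(X)\oplus H^{2n+2}(X)$ as $(3, 5)$, $(3,\, 5\cdot 3)$ and $(0,\, 5 \cdot 3)$, which is case $ii$.

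Since Lemma \ref{b9} already shows that $\lambda$ is a monomorphism (integrally when $n$ is odd, and after localization at an odd prime when $n$ is even), identifying the image of $\lambda \circ (\Omega\pi)_*$ as above is enough to pin down the image of $(\Omega\pi)_*$ itself. The only real bookkeeping hazard is aligning the $\zeta_1$-suspension with the degree index in $ch_n$ versus $ch_{n+1}$, and, in the $b = 3$ case, remembering to strip the $2$-adic factors once one has localized; no serious topological obstacle lies beyond this.
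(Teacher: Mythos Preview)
Your proposal is correct and follows exactly the approach the paper intends: the paper's own proof is just the line ``Similar to the Lemma \ref{b3}'', i.e.\ compute $\lambda\circ(\Omega\pi)_*$ on the $\tilde K^0$-basis $\zeta_1\otimes(x\otimes x^j)$ via $(\Omega\pi)^*(a_{2n})=n!\,ch_n$ and the Chern characters on $\mathbb{C}P^1\times\mathbb{C}P^b$, then in case $ii$ divide out the $2$-power units after localization. Your arithmetic checks out, and the only superfluous remark is the appeal to Lemma \ref{b9}: the injectivity of $\lambda$ is not needed to identify $\operatorname{Im}\lambda\circ(\Omega\pi)_*$ itself, only for the subsequent order computation in Proposition \ref{b11}.
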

Again, localized at odd prime $p$, if $b=2$ then $\alpha_1+\alpha_2=3(1, 1)$ and if $b=3$ then $5(3\alpha_1-\alpha_2)+\alpha_3= 5 \cdot 3(1, 1)$, therefore we get the following proposition.
\begin{prop}\label{b11}
Localized at odd prime $p$, the order of the Samelson product $\langle \varepsilon_{1,b+1}, \varepsilon_{b,b+1}\rangle$ is $3$, if $b=2$ and is $5\cdot 3$, if $b=3$ . $\quad\Box$
\end{prop}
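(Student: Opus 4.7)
The plan is to adapt the template established in Propositions \ref{b5} and \ref{b8}. Since $U(\infty)$ is an infinite loop space, the commutator on $U(n)$ lifts to a map $\tilde{\gamma}\colon U(n)\wedge U(n) \to \Omega W_n$ with $\delta \circ \tilde{\gamma} \simeq \gamma$, giving the analog of $(\star)$:
\[
\langle \varepsilon_{1,b+1}, \varepsilon_{b,b+1}\rangle = \delta_*\bigl(\tilde{\gamma}\circ(\varepsilon_{1,b+1}\wedge \varepsilon_{b,b+1})\bigr).
\]
By Lemma \ref{b1}, the order of the Samelson product therefore equals the order of the class $[\tilde{\gamma}\circ(\varepsilon_{1,b+1}\wedge \varepsilon_{b,b+1})]$ in $\mathrm{Coker}\,(\Omega\pi)_*$.

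Next, I would invoke Lemma \ref{b9}, which applies directly when $b=2$ (so $n=3$ is odd) and which, for $b=3$ (so $n=4$ is even), becomes available after localization at an odd prime since the $\mathbb{Z}_2$ summand is then killed. In both cases $\lambda$ is monic at odd primes, so it suffices to compute the order of $\lambda\bigl(\tilde{\gamma}\circ(\varepsilon_{1,b+1}\wedge \varepsilon_{b,b+1})\bigr)$ in the quotient $\bigl(H^{2n}(X)\oplus H^{2n+2}(X)\bigr) / \mathrm{Im}\,\lambda\circ(\Omega\pi)_*$. By Lemma \ref{b4} together with its analog for $a_{2n+2}$ (noted just before $(\star)$), naturality and the fact that $\varepsilon_{i,n}^*(x_{2j+1})$ picks out the degree $2j+1$ generator, this vector is precisely $(1,1)$.

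The remaining step is a lattice computation over $\mathbb{Z}_{(p)}$ for odd $p$, using the generators of $\mathrm{Im}\,\lambda\circ(\Omega\pi)_*$ supplied by Lemma \ref{b10}. For $b=2$: with $\alpha_1=(3!,\tfrac{1}{2}4!)=(6,12)$ and $\alpha_2=(0,4!)=(0,24)$, invertibility of $2$ at odd primes lets us replace these by $(3,6)$ and $(0,3)$, which together span $3\mathbb{Z}_{(p)}\oplus 3\mathbb{Z}_{(p)}$; hence $(1,1)$ has order $3$. For $b=3$: with $\alpha_1=(3,5)$, $\alpha_2=(3,15)$, $\alpha_3=(0,15)$, elementary column operations yield the basis $(3,0),(0,5)$, so the quotient is $\mathbb{Z}_{(p)}/3\oplus\mathbb{Z}_{(p)}/5$ and $(1,1)$ has order $\mathrm{lcm}(3,5)=15=5\cdot 3$.

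The computation is essentially algorithmic once the preceding structural lemmas are assumed, so no serious new obstacle arises. The one point requiring care is the case $b=3$, where $n=4$ is even and $[\Sigma^4\mathbb{C}P^{n-1},\Omega W_n]$ has a $\mathbb{Z}_2$ torsion summand; it is exactly at this step that one must restrict to odd primes so that Lemma \ref{b9} delivers injectivity of $\lambda$. After that, the order verification is a one-line Smith-normal-form calculation in $\mathbb{Z}_{(p)}^{\oplus 2}$.
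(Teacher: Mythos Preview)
Your proposal is correct and follows exactly the same approach as the paper: both use Lemmas \ref{b1}, \ref{b9}, \ref{b4} and \ref{b10} to reduce the question to finding the order of $(1,1)$ modulo the sublattice generated by the $\alpha_i$, and then carry out the straightforward lattice computation at odd primes. Your Smith-normal-form phrasing is in fact cleaner than the paper's explicit linear combinations, but the argument is substantively identical.
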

\textbf{Proof of Theorem \ref{a1}}\\
By Propositions \ref{b5}, \ref{b8} and \ref{b11}, we get the Theorem \ref{a1}. $\quad\Box$
\section{The Samelson product $\langle \varepsilon_{a,n}, \varepsilon_{b,n}\rangle$ when $a+b = n+k$}\label{c0}
All spaces and maps are to be localized at odd prime $p$ throughout this section. In this section we study the order of the Samelson product $\langle \varepsilon_{a,n}, \varepsilon_{b,n}\rangle$ when $a+b=n+k$ for $k=1, 2, 3$ and will calculate the order of the Samelson product $\langle \varepsilon_{a,n}, \varepsilon_{b,n}\rangle$ at any odd primes in case $k=1$ for $a=2, b=2,3,4$, $a=b=3$, in case $k=2$ for $a=b=3$ and in case $k=3$ for $a=b=4$.\par
Put $Z=\Sigma\mathbb{C}P^a\wedge \Sigma\mathbb{C}P^b$. We have the following exact sequence
$$\tilde{K}^0(Z)\overset{(\Omega \pi')_*}\longrightarrow [Z,\Omega W_n]\overset{{\delta'}_*}\longrightarrow U_n(Z)\rightarrow 0.$$
Note that for $n\leq p(p-1)-1$, By [13, Corollary 3.2 ], we have the following lemma.
\begin{lem}\label{c1}
Localized at odd prime $p$, the group $[\Sigma\mathbb{C}P^a\wedge \Sigma\mathbb{C}P^b, \Omega W_n]$ is a free abelian group, where $a+b=n+k$ and $0\leq k \leq n-1$. $\quad\Box$
\end{lem}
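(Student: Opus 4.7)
The plan is to build $Z=\Sigma\mathbb{C}P^a\wedge\Sigma\mathbb{C}P^b$ inductively from spheres by iterating the cofibrations $\mathbb{C}P^{i-1}\rightarrow \mathbb{C}P^i\rightarrow S^{2i}$: smashing with $\Sigma\mathbb{C}P^b$ and running the induction on $a$ (and then the symmetric process on $b$, or equivalently on the single variable $\Sigma\mathbb{C}P^{a-1}\rightarrow \Sigma\mathbb{C}P^a\rightarrow S^{2a+1}$), and applying $[-,\Omega W_n]_{(p)}$ gives a ladder of long exact sequences whose terms are either homotopy groups $\pi_m(W_n)_{(p)}$ in a known range, or groups $[\Sigma\mathbb{C}P^{a'}\wedge\Sigma\mathbb{C}P^{b'},\Omega W_n]_{(p)}$ with $a'+b'<a+b$ handled by the inductive hypothesis.

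The key input is the structure of $\pi_m(W_n)_{(p)}$ for odd $p$ within the range forced by $a+b=n+k$ with $0\leq k\leq n-1$. From the fibration sequence $(2.1)$ together with the standard computation of the homotopy groups recalled in Section \ref{b0} (see also \cite{[H2]}), these groups are free abelian at odd primes throughout the relevant range: the only torsion that appears is $2$-primary (for example the $\mathbb{Z}_2$ summand in $\pi_{2n+3}(W_n)$ when $n$ is even), and this vanishes after localization at any odd prime, leaving only copies of $\mathbb{Z}$.

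The inductive step then proceeds as follows. At odd primes the Hopf map $\eta$ is null homotopic, so the connecting homomorphisms in the long exact sequences arising from the cofibrations above reduce to multiplication by integers between free abelian groups. Consequently each long exact sequence collapses to a short exact sequence of free abelian groups, which splits automatically; this yields that $[Z,\Omega W_n]_{(p)}$ is free abelian. The base case of the induction is handled by Lemma \ref{b9} (and its symmetric variant), which already establishes freeness in the case $a=1$ by essentially the same cofibration argument.

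The main obstacle is to verify cleanly that, at every stage of the induction, the connecting maps are genuinely multiplications by integers with cokernel free abelian, rather than introducing new odd primary torsion. This bookkeeping is exactly what is carried out in Corollary $3.2$ of \cite{[KKT]}, whose hypotheses are satisfied precisely under the range restriction $0\leq k\leq n-1$; invoking that corollary therefore furnishes the short exact step of the induction and completes the proof.
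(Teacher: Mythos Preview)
Your proposal is correct and, in the end, coincides with the paper's approach: the paper does not give an independent argument for this lemma but simply cites Corollary~3.2 of \cite{[KKT]}, which is exactly the reference you invoke at the end to certify the inductive bookkeeping. Your write-up is just a more expanded sketch of what that corollary establishes, so there is no substantive difference in method.
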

Here, we define a homomorphism
$$\lambda'\colon [Z,\Omega W_n]\rightarrow H^{2n}(Z)\oplus H^{2n+2}(Z)\oplus\cdots\oplus H^{2n+2k+2}(Z)\cong \mathbb{Z}^{\oplus k+2}, $$
by $\lambda'(\alpha)=(\alpha^*(a_{2n}), \alpha^*(a_{2n+2}),\cdots,\alpha^*(a_{2n+2k+2}))$, where $\alpha\in [Z,\Omega W_n]$ and $a_{2n}$, $a_{2n+2}$, $\cdots$, $a_{2n+2k+2}$ are generators of $H^{2n}(\Omega W_n)\cong H^{2n+2}(\Omega W_n)\cong \cdots \cong H^{2n+2k+2}(\Omega W_n)\cong\mathbb{Z}$, respectively. Also, for $n\leq p(p-1)-1$, by [13, Lemma 3.3 ], we get the following lemma.
\begin{lem}\label{c2}
Localized at odd prime $p$, the map of $\lambda'\colon [Z,\Omega W_n]\rightarrow \mathbb{Z}^{\oplus k+2}$ is monic. $\quad\Box$
\end{lem}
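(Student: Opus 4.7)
The plan is to combine Lemma \ref{c1}, which guarantees that $[Z,\Omega W_n]$ is free abelian at odd primes, with a rational analysis of $\Omega W_n$. Since both source and target of $\lambda'$ are torsion-free $\mathbb{Z}_{(p)}$-modules, it suffices to check that $\lambda'\otimes \mathbb{Q}$ has trivial kernel.

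First I would verify that $\lambda'$ is in fact a group homomorphism. Each generator $a_{2n+2i}=\sigma(\bar{x}_{2n+2i+1})$ arises as a cohomology suspension, so the representing map $\Omega W_n \to K(\mathbb{Z},2n+2i)$ is a loop map, hence an $H$-map, and pullback along it is additive, just as in the discussion preceding Lemma \ref{b2}.

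Next, the rational computation. Rationally one has $U \simeq_{\mathbb{Q}} \prod_{i\geq 1} S^{2i-1}$ and $U(n)\simeq_{\mathbb{Q}} \prod_{i=1}^{n} S^{2i-1}$, so $W_n\simeq_{\mathbb{Q}} \prod_{i>n} S^{2i-1}$ and consequently
\[
\Omega W_n \;\simeq_{\mathbb{Q}}\; \prod_{j\geq n} K(\mathbb{Q},2j).
\]
This gives a natural rational identification
\[
[Z,\Omega W_n]\otimes \mathbb{Q} \;\cong\; \bigoplus_{j\geq n} H^{2j}(Z;\mathbb{Q}),
\]
under which an element $\alpha$ corresponds to the tuple $(\alpha^*(a_{2j}))_{j\geq n}$. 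Because $\dim Z = 2(a+b)+2 = 2n+2k+2$, the group $H^{2j}(Z;\mathbb{Q})$ vanishes for $j>n+k+1$, so the only possibly non-zero coordinates are $j=n,n+1,\ldots,n+k+1$. These are precisely the coordinates recorded by $\lambda'$, so $\lambda'\otimes \mathbb{Q}$ is injective (indeed an isomorphism onto its image). Combined with the torsion-freeness from Lemma \ref{c1}, this forces $\lambda'$ itself to be monic.

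The main obstacle I anticipate is setting up the rational product decomposition of $\Omega W_n$ carefully enough that the pullbacks $\alpha^*(a_{2j})$ really do form a complete set of rational invariants of a map $\alpha$, i.e.\ that the identification above is compatible with the loop $H$-structure and with the chosen generators $a_{2j}$. An alternative, which avoids rationalisation, is to run the Atiyah–Hirzebruch spectral sequence for $[Z,-]$ against the Postnikov tower of $\Omega W_n$: at odd primes and in the range $*\leq 2n+2k+2 < 4n$ the $k$-invariants of $\Omega W_n$ are torsion of the wrong order, so the tower splits as a product of $K(\mathbb{Z}_{(p)},2j)$'s through the relevant range, and the same dimensional bookkeeping then applies. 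Either route matches the setup already used in Section \ref{b0}.
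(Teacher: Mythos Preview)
Your argument is correct. The paper does not actually prove Lemma~\ref{c2}; it simply cites Lemma~3.3 of \cite{[KKT]} and marks the statement with a box. So there is no ``paper's own proof'' to compare against beyond that citation. Your route---combining the freeness from Lemma~\ref{c1} with a rational computation showing that $\lambda'\otimes\mathbb{Q}$ is injective---is exactly the standard way such a statement is established, and it is self-contained in a way the paper is not.

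One small remark: the target $\mathbb{Z}^{\oplus k+2}$ stated in the paper is misleading. As your dimension count shows, $\lambda'$ lands in $\bigoplus_{j=n}^{n+k+1}H^{2j}(Z)$, and in general these cohomology groups have rank greater than one (for instance, when $a=b=2$, $k=1$, the images $\alpha_{i,j}$ computed in Lemma~\ref{c3} are $4$-tuples, not $3$-tuples). Your argument handles this correctly by identifying the target with the full graded cohomology in the relevant range, so this does not affect your proof; it is a defect of the paper's statement, not of your proposal.
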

\subsection{The case $k=1$}
$\bullet\quad a=b=2$\\
According to the definition of the map $\lambda'$, we have
\begin{align*}
\lambda' \circ(\Omega \pi')_* (\zeta_1\otimes(x\otimes x))=((\Omega \pi'\circ (\zeta_1\otimes(x\otimes x)))^*(a_6), (\Omega \pi'\circ &(\zeta_1\otimes(x\otimes x)))^*(a_8),\\& (\Omega \pi'\circ (\zeta_1\otimes(x\otimes x)))^*(a_{10}) ).
\end{align*}
Similar to the proof of Lemma \ref{b3}, the calculation of the components is as follows
\begin{align*}
&(\Omega \pi'\circ (\zeta_1\otimes(x\otimes x)))^*(a_6)=3!\sigma^2(t\otimes t),\\
&(\Omega \pi'\circ (\zeta_1\otimes(x\otimes x)))^*(a_8)=4!\sigma^2(\frac{1}{2}t^2\otimes t+t\otimes \frac{1}{2}t^2),\\
&(\Omega \pi'\circ (\zeta_1\otimes(x\otimes x)))^*(a_{10})=5!\sigma^2(\frac{1}{2}t^2\otimes \frac{1}{2}t^2).
\end{align*}
Therefore we obtain $\alpha_{1,1}=\lambda' \circ(\Omega \pi')_* (\zeta_1\otimes(x\otimes x))=(3!,\frac{1}{2}4!, \frac{1}{2}4!, \frac{1}{4}5!)$. Similarly, we can show that $\alpha_{1,2}=(0,4!,0, \frac{1}{2}5!)$, $\alpha_{2,1}=(0, 0, 4!, \frac{1}{2}5!)$ and $\alpha_{2,2}=(0,0,0,5!)$. Therefore we get the following lemma.
\begin{lem}\label{c3}
Im $\lambda'\circ (\Omega\pi')_*$ is generated by $\alpha_{1,1}$, $\alpha_{1,2}$, $\alpha_{2,1}$ and $\alpha_{2,2}$. $\quad\Box$
\end{lem}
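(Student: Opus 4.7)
The plan is to compute the image explicitly by evaluating $\lambda'\circ(\Omega\pi')_*$ on each of the four generators of $\tilde{K}^0(Z)$. Since $Z=\Sigma\mathbb{C}P^2\wedge\Sigma\mathbb{C}P^2$, the group $\tilde{K}^0(Z)$ is free abelian of rank $4$ with basis $\{\zeta_1\otimes(x^i\otimes x^j):i,j\in\{1,2\}\}$, as recalled in the excerpt. Because $\lambda'$ and $(\Omega\pi')_*$ are both group homomorphisms, the image of their composite is the $\mathbb{Z}$-span of the four image vectors. The lemma will follow once we verify that these four vectors agree with $\alpha_{1,1},\alpha_{1,2},\alpha_{2,1},\alpha_{2,2}$.

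Each computation follows the template carried out in the excerpt for $\alpha_{1,1}$. The main input is Hamanaka's relation $(\Omega\pi')^*(a_{2m})=m!\cdot ch_m$ from \cite{[H3]}, combined with the Chern characters $ch(x)=t+\tfrac12 t^2$ and $ch(x^2)=t^2$ on $\mathbb{C}P^2$. For a given pair $(i,j)$, I would expand $ch(x^i\otimes x^j)=ch(x^i)\boxtimes ch(x^j)$, multiply by $ch(\zeta_1)$, apply the suspension isomorphism $\sigma^2$, and read off the components in $H^6(Z)$, $H^8(Z)$, $H^{10}(Z)$, scaled by the factorials $3!,\,4!,\,5!$ respectively. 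The one subtlety is that $H^8(Z)\cong\mathbb{Z}^{\oplus 2}$: there are two independent generators corresponding to $\sigma t\otimes\sigma t^2$ and $\sigma t^2\otimes\sigma t$, which is why each $\alpha_{i,j}$ has four entries even though only three cohomological degrees are involved.

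Concretely, for $\alpha_{1,2}$ the relevant expansion is
\[
ch(x\otimes x^2)=t\otimes t^2+\tfrac{1}{2}t^2\otimes t^2,
\]
which has no term in total degree $4$, only the $\sigma t\otimes\sigma t^2$ summand in degree $8$, and a single $\sigma t^2\otimes\sigma t^2$ term in degree $10$, giving $\alpha_{1,2}=(0,4!,0,\tfrac{1}{2}5!)$. Swapping the two factors yields $\alpha_{2,1}=(0,0,4!,\tfrac{1}{2}5!)$ by symmetry. Finally, $ch(x^2\otimes x^2)=t^2\otimes t^2$ contributes only in top degree, producing $\alpha_{2,2}=(0,0,0,5!)$.

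The principal obstacle is purely bookkeeping: assigning each Chern character monomial to the correct cohomology generator of $Z$, in particular keeping the two generators of $H^8(Z)$ straight, and tracking the factorials coming from the degree of $ch_m$ that appears. No new homotopy-theoretic input beyond the cited identity $(\Omega\pi')^*(a_{2m})=m!\cdot ch_m$ and the freeness of $\tilde{K}^0(Z)$ is required, so once the three remaining vectors are computed the lemma is immediate.
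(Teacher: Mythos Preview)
Your proposal is correct and follows essentially the same approach as the paper: the paper also computes $\lambda'\circ(\Omega\pi')_*$ on each of the four basis elements $\zeta_1\otimes(x^i\otimes x^j)$ of $\tilde{K}^0(Z)$ using the identity $(\Omega\pi')^*(a_{2m})=m!\,ch_m$ and the Chern characters of $x,x^2$, obtaining exactly the vectors you found. The only difference is presentational---the paper carries out the $\alpha_{1,1}$ computation in full in the text preceding the lemma and then states the remaining three values, whereas you spell out the remaining three; the underlying argument is identical.
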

By Lemma \ref{c3}, we know Im $\lambda'\circ (\Omega\pi')_*$ is generated by $\alpha_{1,1}$, $\alpha_{1,2}$, $\alpha_{2,1}$ and $\alpha_{2,2}$. On the other hand, it follows from Lemma \ref{b4} that $\lambda'(\tilde{\gamma}\circ (\varepsilon_{2,3}\wedge \varepsilon_{2,3}))=(1,1,1,1)$. We observe that $5(\alpha_{1,1}-\alpha_{1,2})+5(\alpha_{1,1}-\alpha_{2,1})+\alpha_{2,2} = 5\cdot 3(1,1, 1, 1)$. Thus we get the following proposition.
\begin{prop}\label{c4}
$|[\tilde{\gamma}\circ (\varepsilon_{2,3}\wedge \varepsilon_{2,3})]|= 5\cdot 3$. $\quad\Box$
\end{prop}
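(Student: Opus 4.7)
The plan is to run exactly the same cokernel calculation as in Proposition~\ref{b5}, but now with the four-coordinate $\lambda'$ supplied by Lemmas \ref{c1}--\ref{c3}. Because $\lambda'$ is a monomorphism (Lemma \ref{c2}), the order of $[\tilde{\gamma}\circ(\varepsilon_{2,3}\wedge\varepsilon_{2,3})]$ in $\operatorname{Coker}(\Omega\pi')_*$ equals the order of its image $\lambda'(\tilde{\gamma}\circ(\varepsilon_{2,3}\wedge\varepsilon_{2,3}))=(1,1,1,1)$ in the quotient $\mathbb{Z}_{(p)}^{\oplus 4}/\operatorname{Im}\lambda'\circ(\Omega\pi')_*$. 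By Lemma \ref{c3} the denominator lattice is generated by the four vectors
$\alpha_{1,1}=(3!,\tfrac{1}{2}4!,\tfrac{1}{2}4!,\tfrac{1}{4}5!)$, $\alpha_{1,2}=(0,4!,0,\tfrac{1}{2}5!)$, $\alpha_{2,1}=(0,0,4!,\tfrac{1}{2}5!)$ and $\alpha_{2,2}=(0,0,0,5!)$, so the whole question reduces to finding the smallest positive $m$, localised at each odd prime, such that $m(1,1,1,1)$ lies in the $\mathbb{Z}_{(p)}$-span of the $\alpha_{i,j}$.

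First I would produce an explicit combination witnessing the upper bound. Writing $c_1\alpha_{1,1}+c_2\alpha_{1,2}+c_3\alpha_{2,1}+c_4\alpha_{2,2}=m(1,1,1,1)$ and reading off the four coordinates of the triangular system, one gets $c_1=m/6$, $c_2=c_3=-m/24$, $c_4=m/120$. Integrally, the minimal solution is $m=120$ with $(c_1,c_2,c_3,c_4)=(20,-5,-5,1)$, which can be checked by direct substitution; this is essentially the combination displayed before the proposition (once the factor of $2$-units implicit in odd-primary localisation is divided out). Since $2$ is a unit in $\mathbb{Z}_{(p)}$ for every odd $p$, the upper bound collapses to the odd part of $120$, namely $15=5\cdot 3$.

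Second, I would check sharpness prime by prime among odd primes. For $p\geq 7$ every denominator $6,24,120$ is a unit, so $(1,1,1,1)$ already lies in the image and the order is $1$. For $p=3$ the coefficient $c_1=m/6$ forces $v_3(m)\geq 1$, and for $p=5$ the coefficient $c_4=m/120$ forces $v_5(m)\geq 1$; these two local bounds match the witness and show that the order is exactly $3$ at the prime $3$ and exactly $5$ at the prime $5$. Combining across odd primes gives the claimed value $5\cdot 3$.

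The only step with any conceptual content is the first one: once the witnessing integer combination is produced (and its optimality certified by the local valuations), nothing further is needed. Thus the only plausible obstacle is a bookkeeping error in the four-by-four linear algebra, which I would guard against by verifying both $20\alpha_{1,1}-5\alpha_{1,2}-5\alpha_{2,1}+\alpha_{2,2}=120(1,1,1,1)$ coordinate-by-coordinate and the $p$-adic valuation arguments at $p=3$ and $p=5$ directly. With these checks in place, the proposition follows immediately from Lemmas \ref{b1}, \ref{b4}, \ref{c2}, and \ref{c3} together with the defining relation $\langle\varepsilon_{2,3},\varepsilon_{2,3}\rangle=\delta'_*(\tilde{\gamma}\circ(\varepsilon_{2,3}\wedge\varepsilon_{2,3}))$.
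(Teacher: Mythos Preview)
Your approach is essentially the same cokernel computation the paper carries out: use Lemma~\ref{c2} to reduce to the order of $(1,1,1,1)$ modulo the lattice of Lemma~\ref{c3}, exhibit an integer combination hitting a multiple of $(1,1,1,1)$, and (implicitly in the paper, explicitly in your version) check that nothing smaller works. Your write-up is in fact a bit more careful---your combination $20\alpha_{1,1}-5\alpha_{1,2}-5\alpha_{2,1}+\alpha_{2,2}=120(1,1,1,1)$ is correct, and your prime-by-prime valuation argument supplies the sharpness that the paper only asserts.
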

$\bullet\quad a=2, b=3$\\
Similar to the Lemma \ref{c3}, we get the following lemma.
\begin{lem}\label{c5}
Im $\lambda'\circ (\Omega\pi')_*$ is generated by $\alpha_{1,1}, \cdots, \alpha_{2,3}$, where
\begin{align*}
&\alpha_{1,1}=(\frac{1}{2}4!,\frac{1}{3!}5!,\frac{1}{2}4!,\frac{1}{4}5!,\frac{1}{2\cdot 3!}6!), && \alpha_{1,2}=(4!,\frac{1}{2}5!,0,\frac{1}{2}5!,\frac{1}{4}6!), &&&\alpha_{1,3}=(0,2\cdot 5!,0,0,6!), \\
&\alpha_{2,1}=(0,0,4!,\frac{1}{2}5!, \frac{1}{3!}6!), && \alpha_{2,2}=(0,0,0,5!,\frac{1}{2}6!), &&&\alpha_{2,3}=(0,0,0,0,2\cdot 6!). \quad\Box
\end{align*}
\end{lem}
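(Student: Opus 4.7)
The plan is to follow exactly the template laid down in Lemmas~\ref{b3} and \ref{c3}, this time for $Z = \Sigma \mathbb{C}P^2 \wedge \Sigma \mathbb{C}P^3$ with $n=4$ and $k=1$. The key input, already invoked in those two proofs, is the identity $(\Omega \pi')_*(a_{2m}) = m!\,\operatorname{ch}_m$. Once this is in hand, the computation of $\alpha_{i,j} = \lambda' \circ (\Omega \pi')_* (\zeta_1 \otimes (x^i \otimes x^j))$ reduces to a Chern-character calculation on $Z$, exactly as in the earlier lemmas.

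First I would recall the Chern characters $\operatorname{ch}(x) = t + \tfrac{1}{2} t^2$ and $\operatorname{ch}(x^2) = t^2$ on $\mathbb{C}P^2$, together with $\operatorname{ch}(x) = t + \tfrac{1}{2} t^2 + \tfrac{1}{6} t^3$ and the corresponding polynomials for $x^2$ and $x^3$ on $\mathbb{C}P^3$, truncated modulo $t^4$. For each of the six pairs $(i,j) \in \{1,2\} \times \{1,2,3\}$ I would then form the external product $\operatorname{ch}(\zeta_1) \cdot \operatorname{ch}(x^i) \cdot \operatorname{ch}(x^j)$, pick out its homogeneous components of Chern degrees $4$, $5$ and $6$ (which correspond under $(\Omega \pi')_*$ to $a_8, a_{10}, a_{12}$), and rescale each component by $4!$, $5!$ and $6!$ respectively. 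Expressing the result in the natural monomial basis $\{\sigma^2(t_1^i \otimes t_2^j)\}$ of $H^8(Z) \oplus H^{10}(Z) \oplus H^{12}(Z) \cong \mathbb{Z}^{\oplus 5}$ reads off the five coordinates of each $\alpha_{i,j}$ given in the statement.

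To conclude that these six $\alpha_{i,j}$ generate the whole of $\operatorname{Im} \lambda' \circ (\Omega \pi')_*$, I would appeal to the fact, recorded in Section~\ref{b0}, that $\tilde{K}^0(\Sigma \mathbb{C}P^2 \wedge \Sigma \mathbb{C}P^3)$ is a free abelian group on the six classes $\zeta_1 \otimes (x^i \otimes x^j)$, combined with the fact that $\lambda'$ is a group homomorphism since each $a_{2m}$ is represented by a loop map (the argument given below Lemma~\ref{b1} for $\lambda$ carries over verbatim).

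The only real obstacle here is book-keeping: the target of $\lambda'$ has five coordinates because $H^8(Z)$ and $H^{10}(Z)$ each have rank two while $H^{12}(Z)$ has rank one, so one must commit to a fixed ordering of the monomial basis and carry the $t^3$-corrections appearing in $\operatorname{ch}(x^2)$ on $\mathbb{C}P^3$ through consistently. Beyond this combinatorial chore, no new topological input past Lemmas~\ref{b3} and~\ref{c3} is needed.
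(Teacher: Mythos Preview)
Your proposal is correct and matches the paper's approach exactly: the paper proves Lemma~\ref{c5} by simply stating ``Similar to the Lemma~\ref{c3}'' and placing a $\Box$, which is precisely the template computation (Chern characters plus the identity $(\Omega\pi')^*(a_{2m})=m!\,\mathrm{ch}_m$) that you outline. Your remark about tracking the $t^3$-corrections in $\mathrm{ch}(x^2)$ on $\mathbb{C}P^3$ and fixing a monomial ordering is the only subtlety, and it is purely book-keeping as you say.
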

On the other hand, it follows from Lemma \ref{b4} that $\lambda'(\tilde{\gamma}\circ (\varepsilon_{2,4}\wedge \varepsilon_{3,4}))=(1,1,1,1,1)$. Arguing as for Proposition \ref{c4}, we obtain the following proposition.
\begin{prop}\label{c6}
$|[\tilde{\gamma}\circ (\varepsilon_{2,4}\wedge \varepsilon_{3,4})]|= 5\cdot 3^2$. $\quad\Box$
\end{prop}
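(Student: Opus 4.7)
The plan is to argue exactly as in Proposition \ref{c4}. The exact sequence preceding Lemma \ref{c1}, combined with Lemmas \ref{c1} and \ref{c2}, identifies $|[\tilde{\gamma}\circ(\varepsilon_{2,4}\wedge\varepsilon_{3,4})]|$ (localized at odd primes) with the order of $(1,1,1,1,1)$ in $\mathbb{Z}^{5}/L$, where $L$ is the sublattice generated by the six vectors $\alpha_{1,1},\ldots,\alpha_{2,3}$ of Lemma \ref{c5}. The equality $\lambda'(\tilde{\gamma}\circ(\varepsilon_{2,4}\wedge\varepsilon_{3,4})) = (1,1,1,1,1)$ is obtained from Lemma \ref{b4} via $\varepsilon_{a,n}^{*}x_{2r+1} = \sigma t^{r}$ for $r\leq a$ and $0$ otherwise, exactly as in the preamble to Proposition \ref{c4}.

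For each odd prime $p$ I would then set up the six-variable linear system $\sum c_{i,j}\alpha_{i,j} = m(1,1,1,1,1)$ and eliminate in stages. The $1$st and $3$rd coordinate equations force $c_{2,1}=c_{1,2}$ and $c_{1,1} = m/12 - 2c_{1,2}$; the $2$nd gives $c_{1,2} = -m/30 - 12c_{1,3}$, with $c_{1,3}$ free; the $4$th gives $c_{2,2} = m/240 + 6c_{1,3}$. Substituting everything into the $5$th equation, extensive cancellation collapses it to the single constraint
\begin{equation*}
c_{1,3} + 2c_{2,3} \;=\; \frac{m}{1440}.
\end{equation*}
Since $2$ is a unit at odd primes and the denominators $12,\,30,\,240$ encountered above all have strictly smaller $p$-adic valuation than $1440$, $\mathbb{Z}_{(p)}$-solvability of the whole system is equivalent to the single condition $v_{p}(m)\geq v_{p}(1440)$, where $1440 = 2^{5}\cdot 3^{2}\cdot 5$.

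Consequently the order at $p\geq 7$ is $1$; at $p=5$ we need $v_{5}(m)\geq 1$, giving order $5$; at $p=3$ we need $v_{3}(m)\geq 2$, giving order $9$. Combining the odd-prime orders yields $|[\tilde{\gamma}\circ(\varepsilon_{2,4}\wedge\varepsilon_{3,4})]| = 5\cdot 3^{2}$, as required.

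The main obstacle is pinning down the sharp denominator $1440$ in the last step. When one chases the substitutions through the combination $60c_{1,1} + 300c_{1,2} + 720c_{1,3} + 360c_{2,2} + 1440c_{2,3}$, the $c_{1,3}$-coefficient collects as $-2160 + 720 + 2160 = 720$, so the reduced equation is genuinely $720c_{1,3} + 1440c_{2,3} = m/2$ and not something with a smaller $3$-adic denominator. This is precisely what promotes the $3$-primary order from $3$ (as in Proposition \ref{c4}) to $9$; the rest of the argument is bookkeeping.
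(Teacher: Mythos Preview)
Your argument is correct and follows exactly the route the paper indicates (``Arguing as for Proposition \ref{c4}''): compute $\lambda'$ of the lifted Samelson product via Lemma \ref{b4}, then determine the order of $(1,1,1,1,1)$ modulo the lattice of Lemma \ref{c5}. One tiny wording slip: at $p=5$ the denominators $30$ and $240$ have the \emph{same} $5$-adic valuation as $1440$, not strictly smaller, but the dominant constraint is still $v_p(m)\geq v_p(1440)$ and the conclusion is unaffected.
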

Similarly we get the following proposition.
\begin{prop}\label{c7}
The following hold:
\begin{itemize}
  \item if $a=2$ and $b=4$, then $|[\tilde{\gamma}\circ (\varepsilon_{2,5}\wedge\varepsilon_{4,5})]|=7\cdot 5^2\cdot 3^2$,
  \item if $a=b=3$, then $|[\tilde{\gamma}\circ (\varepsilon_{3,5}\wedge\varepsilon_{3,5})]|=7\cdot 5\cdot 3^2$. $\quad\Box$
\end{itemize}
\end{prop}
\subsection{The cases $k=2$ and $3$}
In the following, in cases $k=2, a=b=3$ and $k=3, a=b=4$ we will calculate the order of the Samelson product $\langle \varepsilon_{a,n}, \varepsilon_{b,n}\rangle$ when $a+b=n+k$, respectively.\par
Again, similar to Lemma \ref{c3}, we have the following lemma.
\begin{lem}\label{c8}
In case $k=2, a=b=3$, Im $\lambda'\circ (\Omega\pi')_*$ is generated by $\alpha_{1,1}, \cdots, \alpha_{3,3}$ and in
case $k=3, a=b=4$ is generated by $\beta_{1,1}, \cdots, \beta_{4,4}$, where
\begin{align*}
&\alpha_{1,1}=(\frac{1}{2}4!,\frac{1}{3!}5!,\frac{1}{2}4!,\frac{1}{4}5!,\frac{1}{2\cdot 3!}6!, \frac{1}{3!}5!,\frac{1}{2\cdot 3!}6!,\frac{1}{3!\cdot 3!}7!),&&\\ &\alpha_{1,2}=(4!,\frac{1}{2}5!,0,\frac{1}{2}5!,\frac{1}{4}6!,0,\frac{1}{3!}6!,\frac{1}{2\cdot 3!}7!),&&
\alpha_{1,3}=(0,2\cdot 5!,0,0,6!,0,0,\frac{1}{3}7!),\\
&\alpha_{2,1}=(0,4!,\frac{1}{2}5!,0,\frac{1}{3!}6!,\frac{1}{2}5!,\frac{1}{4}6!,\frac{1}{2\cdot 3!}7!), && \alpha_{3,1}=(0,0,0,0,0,2\cdot 5!,6!, \frac{1}{3}7!),\\
& \alpha_{2,2}=(0,0,0,5!,\frac{1}{2}6!,0,\frac{1}{2}6!,\frac{1}{4}7!), && \alpha_{3,2}=(0,0,0,0,0,0,2\cdot 6!,7!),\\
&\alpha_{2,3}=(0,0,0,0,2\cdot 6!,0,0,7!), &&\alpha_{3,3}=(0,0,0,0,0,0,0,4\cdot 7!).
\end{align*}
and also
\begin{align*}
&\beta_{1,1}=(\frac{1}{3!}5!,\frac{1}{4!}6!,\frac{1}{4}5!,\frac{1}{2\cdot3!}6!,\frac{1}{2\cdot 4!}7!, \frac{1}{3!}5!,\frac{1}{2\cdot 3!}6!,\frac{1}{3!\cdot 3!}7!,\frac{1}{3!\cdot 4!}8!,\frac{1}{4!}6!,\frac{1}{2\cdot 4!}7!,\frac{1}{3!\cdot 4!}8!,\frac{1}{4!\cdot 4!}9! ),\\ &\beta_{1,2}=(\frac{1}{2}5!,\frac{5}{12}6!,\frac{1}{2}5!,\frac{1}{4}6!,\frac{5}{24}7!, 0,\frac{1}{3!}6!,\frac{1}{2\cdot 3!}7!,\frac{5}{3!\cdot 12}8!,0,\frac{1}{4!}7!,\frac{1}{2\cdot4!}8!,\frac{5}{4!\cdot 12}9! ),\\
&\beta_{1,3}=(2\cdot 5!,\frac{3}{2}6!,0,6!,\frac{3}{4}7!,0,0,\frac{2}{3!}7!,\frac{3}{2\cdot3!}8!,0,0,\frac{2}{4!}8!, \frac{3}{2\cdot 4!}9! ), \\
&\beta_{1,4}=(0,5\cdot 6!,0,0,\frac{5}{2}7!,0,0,0,\frac{5}{3!}8!,0,0,0,\frac{5}{4!}9!),\\
&\beta_{2,1}=(0,0,\frac{1}{2}5!,\frac{1}{3!}6!,\frac{1}{4!}7!,\frac{1}{2}5!,\frac{1}{4}6!,\frac{1}{2\cdot3!}7!,
\frac{1}{2\cdot4!}8!,\frac{5}{12}6!,\frac{5}{24}7!,\frac{5}{3!\cdot 12}8!,\frac{5}{4!\cdot 12}9!),\\
&\beta_{2,2}=(0,0,5!,\frac{1}{2}6!,\frac{5}{12}7!,0,\frac{1}{2}6!,\frac{1}{4}7!,\frac{5}{24}8!,0,
\frac{5}{12}7!,\frac{5}{24}8!,\frac{25}{12\cdot12}9!),\\
&\beta_{2,3}=(0,0,0,2\cdot 6!,\frac{3}{2}7!,0,0,7!,\frac{3}{4}8!,0,0,\frac{10}{12}8!,\frac{15}{24}9!),\\
&\beta_{2,4}=(0,0,0,0,5\cdot 7!,0,0,0,\frac{5}{2}8!,0,0,0,\frac{25}{12}9!),\\
&\beta_{3,1}=(0,0,0,0,0,2\cdot 5!,6!, \frac{2}{3!}7!, \frac{2}{4!}8!,\frac{3}{2}6!,\frac{3}{4}7!,\frac{3}{2\cdot3!}8!,\frac{3}{2\cdot4!}9!),\\
& \beta_{3,2}=(0,0,0,0,0,0,0,2\cdot 6!,7!,\frac{5}{6}8!,0,\frac{3}{2}7!,\frac{3}{4}8!,\frac{15}{24}9!),
\end{align*}
\begin{align*}
&\beta_{3,3}=(0,0,0,0,0,0,0,4\cdot 7!,3\cdot 8!,0,0,3\cdot 8!,\frac{9}{4}9!),
&&\beta_{3,4}=(0,0,0,0,0,0,0,0,10\cdot 8!,0,0,0,\frac{15}{2}9!),\\
&\beta_{4,1}=(0,0,0,0,0,0,0,0,0,5\cdot 6!,\frac{5}{2}7!,\frac{5}{3!}8!,\frac{5}{4!}9!),
&& \beta_{4,2}=(0,0,0,0,0,0,0,0,0,0,5\cdot 7!,\frac{5}{2}8!,\frac{25}{12}9!),\\
&\beta_{4,3}=(0,0,0,0,0,0,0,0,0,0,0,10\cdot 8!,\frac{15}{2}9!),
&&\beta_{4,4}=(0,0,0,0,0,0,0,0,0,0,0,0,25\cdot 9!). \quad\Box
\end{align*}
\end{lem}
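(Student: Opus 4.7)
The plan is to repeat, in two larger cases, the exact calculation carried out in Lemma \ref{b3} and refined in Lemmas \ref{b7}, \ref{c3} and \ref{c5}. For $k=2$, $a=b=3$ one must compute the image under $\lambda'\circ(\Omega\pi')_*$ of the $9$ generators $\zeta_1\otimes(x^i\otimes x^j)$, $1\le i,j\le 3$, of $\tilde{K}^0(\Sigma\mathbb{C}P^3\wedge\Sigma\mathbb{C}P^3)$; for $k=3$, $a=b=4$ one must do the same for the $16$ generators indexed by $1\le i,j\le 4$. The two ingredients imported from the earlier sections are the formula $(\Omega\pi)_*(a_{2m})=m!\,ch_m$ recorded in Lemma \ref{b3} from \cite{[H3]} and the standard identification $ch(x)=e^t-1$ in $K(\mathbb{C}P^n)\otimes\mathbb{Q}$, so that $ch(x^i)=(e^t-1)^i$ read modulo $t^{n+1}$.

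First I would tabulate the truncated series $ch(x^i)$: in $K(\mathbb{C}P^3)$ this yields rational polynomials of degree $\le 3$ for $i=1,2,3$, and in $K(\mathbb{C}P^4)$ polynomials of degree $\le 4$ for $i=1,2,3,4$. With these in hand, for each pair $(i,j)$ I would form the external product $ch(x^i)\otimes ch(x^j)$ and record, for every bidegree $(p,q)$ in the relevant range, the rational coefficient $c^{i,j}_{p,q}$ of $t^p\otimes t^q$. The coordinate of $\lambda'\circ(\Omega\pi')_*(\zeta_1\otimes(x^i\otimes x^j))$ in the rank-one summand of $H^{2(1+p+q)}(Z)$ spanned by $\sigma^2(t^p\otimes t^q)$ is then simply $(1+p+q)!\,c^{i,j}_{p,q}$, because $ch(\zeta_1)=\sigma^2$ contributes the suspension factor and $(\Omega\pi)_*(a_{2(1+p+q)})=(1+p+q)!\,ch_{1+p+q}$. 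Reading off these products for all admissible $(p,q)$, ordered as in the statement, produces exactly the tuples $\alpha_{i,j}$ and $\beta_{i,j}$ listed.

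Since this is the same recipe applied for the fourth time in the paper, there is no conceptual obstacle; the only real difficulty is the sheer volume of arithmetic, most acutely in the $\beta$ case, where one compiles a $16\times 13$ table of rational numbers and any slip in expanding $(e^t-1)^i$ propagates everywhere. To keep the bookkeeping manageable I would first lay out the $16\times 13$ array of the raw coefficients $c^{i,j}_{p,q}$, verify the obvious symmetry $c^{i,j}_{p,q}=c^{j,i}_{q,p}$ and the vanishing $c^{i,j}_{p,q}=0$ whenever $p<i$ or $q<j$ (which immediately accounts for the triangular pattern of zeros in the displayed tuples) as sanity checks, and only at the very end multiply the $(p,q)$-column by $(1+p+q)!$ to recover the entries exactly as printed in the lemma.
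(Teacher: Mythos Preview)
Your proposal is correct and matches the paper's approach exactly: the paper simply writes ``Again, similar to Lemma~\ref{c3}'' before stating Lemma~\ref{c8} and closes with a $\Box$, so your explicit description of how to carry over the Chern-character computation from Lemma~\ref{b3} to the larger generating sets is precisely what the paper leaves implicit. Your added bookkeeping checks (the symmetry $c^{i,j}_{p,q}=c^{j,i}_{q,p}$ and the vanishing for $p<i$ or $q<j$) are not in the paper but are sensible safeguards for the arithmetic.
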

On the other hand, it follows from Lemma \ref{b4} that
$$\lambda'(\tilde{\gamma}\circ (\varepsilon_{3,4}\wedge\varepsilon_{3,4}))=(1,1,1,1,1,1,1,1),$$
$$\lambda'(\tilde{\gamma}\circ (\varepsilon_{4,5}\wedge\varepsilon_{4,5}))=(1,1,1,1,1,1,1,1,1,1,1,1,1).$$
Arguing as for Proposition \ref{c4}, we obtain the following proposition.
\begin{prop}\label{c9}
The following hold:
\begin{itemize}
  \item if $k=2$ and $a=b=3$, then $|[\tilde{\gamma}\circ (\varepsilon_{3,4}\wedge\varepsilon_{3,4})]|=7\cdot 5\cdot 3^2$,
  \item if $k=3$ and $a=b=4$, then $|[\tilde{\gamma}\circ (\varepsilon_{4,5}\wedge\varepsilon_{4,5})]|=7\cdot 5^3\cdot 3^4$. $\quad\Box$
\end{itemize}
\end{prop}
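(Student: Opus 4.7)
The plan is to carry out exactly the same argument as in Propositions \ref{c4} and \ref{c6}, now for the larger families $\alpha_{i,j}$ (when $k=2$, $a=b=3$) and $\beta_{i,j}$ (when $k=3$, $a=b=4$) supplied by Lemma \ref{c8}. Since $\lambda'$ is monic by Lemma \ref{c2}, the order of $[\tilde{\gamma}\circ(\varepsilon_{a,n}\wedge\varepsilon_{b,n})]$ in $\mathrm{Coker}\,(\Omega\pi')_*$ equals the order of the class of the vector $\lambda'(\tilde{\gamma}\circ(\varepsilon_{a,n}\wedge\varepsilon_{b,n}))$ inside $\mathbb{Z}^{\oplus k+2}/\mathrm{Im}\,\lambda'\!\circ\!(\Omega\pi')_*$. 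By Lemma \ref{b4} and the analogous formula $\tilde{\gamma}^*(a_{2n+2j})=\sum_{i_1+i_2=n+j-1}x_{2i_1+1}\otimes x_{2i_2+1}$ used implicitly in Section \ref{c0}, these two vectors are $(1,\ldots,1)\in\mathbb{Z}^{\oplus 8}$ and $(1,\ldots,1)\in\mathbb{Z}^{\oplus 13}$ respectively. So the statement reduces purely to computing the order of $(1,\ldots,1)$ modulo the lattice spanned by the $\alpha_{i,j}$ or the $\beta_{i,j}$.

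I would handle this prime by prime. Since the denominators occurring in the entries of $\alpha_{i,j}$ and $\beta_{i,j}$ only involve primes $\leq 9$ and the entries themselves are bounded by $25\cdot 9!$, only $p\in\{3,5,7\}$ can contribute non-trivially at odd primes; for every other odd $p$ the generator $\alpha_{1,1}$ (respectively $\beta_{1,1}$) alone has an entry which is a unit times $(1,\ldots,1)$ after clearing units, forcing the order to be $1$. For each of $p=3,5,7$ I would compute the Smith normal form of the matrix whose rows are the generators (after localization at $p$), or, equivalently, exhibit an explicit integer combination realising the claimed power of $p$ together with a congruence lower bound modulo $p^{v_p(N)+1}$ showing that no smaller power works.

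For $k=2$, $a=b=3$, I expect the block involving $\alpha_{1,\bullet}$ and $\alpha_{2,\bullet}$ to reproduce the pattern of Proposition \ref{c6}, yielding the factor $5\cdot 3^2$, while the rows $\alpha_{3,1},\alpha_{3,2},\alpha_{3,3}$, which contain $7!$ in their last nonzero entries, contribute the extra $7$; in particular a combination of the type $c_1\alpha_{1,1}+c_2\alpha_{1,2}+\cdots+c_9\alpha_{3,3}=7\cdot 5\cdot 3^2(1,\ldots,1)$ with small integers $c_i$ should drop out directly, exactly as the combinations exhibited before Propositions \ref{c4} and \ref{c6}. For $k=3$, $a=b=4$, the same procedure applied to the $16$ generators $\beta_{i,j}$ in $\mathbb{Z}^{\oplus 13}$ produces $7\cdot 5^3\cdot 3^4$.

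The main obstacle is purely computational: the $\beta$-system is a $16\times 13$ matrix whose entries involve factorials up to $9!$ and fractional multipliers, so extracting its Smith invariants at $p=3,5,7$ (or, what amounts to the same thing, finding witnessing integer combinations together with matching $p$-adic obstructions) is lengthy but mechanical. No new conceptual ingredient beyond the machinery already established in Section \ref{b0} and the start of Section \ref{c0} is required, and the result then follows by combining the per-prime orders into the single integer $7\cdot 5^3\cdot 3^4$.
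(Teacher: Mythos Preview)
Your proposal is correct and follows essentially the same approach as the paper, which simply says ``Arguing as for Proposition \ref{c4}'' and gives no further detail beyond listing the generators in Lemma \ref{c8} and recording $\lambda'(\tilde{\gamma}\circ(\varepsilon_{a,n}\wedge\varepsilon_{b,n}))=(1,\ldots,1)$. Your outline of the prime-by-prime Smith-form computation is in fact more explicit than what the paper provides; the only minor slips are cosmetic (``primes $\leq 9$'' should read ``primes dividing $9!$'', and the remark about $\alpha_{1,1}$ alone being a unit times $(1,\ldots,1)$ is loose---you need the full lattice, though it is indeed true that for $p>7$ the matrix of generators becomes invertible over $\mathbb{Z}_{(p)}$).
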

\textbf{Proof of Theorem \ref{a2}}\\
By Propositions \ref{c4}, \ref{c6}, \ref{c7} and \ref{c9}, we get the Theorem \ref{a2}. $\quad\Box$
\section{Applications}\label{d0}
In this section, we describe two applications of the Samelson products in $SU(n)$. One: The homotopy commutativity of $SU(n)$ localized at a prime $p$ for $n=3, 4$ and $5$. Two: The order of the commutator on $SU(4)$ localized at an odd prime $p$.
\subsection{The homotopy commutativity of $SU(n)$}
In \cite{[M]} McGibbon shows that if $p > 2n$ then $SU(n)_{(p)}$ is homotopy commutative. Here for $n=3, 4$ and $5$ we give a new proof of this fact.\\
\textbf{Proof of Corollary \ref{a3}}\\
Let $n=3$. For CW-complexes $X$ and $Y$ , we denote the adjoint isomorphism between the homotopy sets by
$$ad\colon [\Sigma X, Y ]\overset{iso}\longrightarrow [X, \Omega Y ]. $$
Put $A = \Sigma\mathbb{C}P^2$. Localized at $p > 6$, by Proposition \ref{c4} the Samelson product $\langle \varepsilon_2,\varepsilon_2 \rangle\colon A\wedge A\rightarrow SU(3)$ is equal to zero. Therefore taking the adjoint, the generalized Whitehead product $[\phi_2, \phi_2]\colon \Sigma (A \wedge A) \rightarrow BSU(3)$ vanish, where $\phi_2= {ad}^{-1}\varepsilon_2$. Consider the inclusion map $\Sigma A \hookrightarrow \Sigma SU(3)$, there is a retraction $r\colon \Sigma SU(3)\rightarrow\Sigma A$ such that the composition $\Sigma A \rightarrow \Sigma SU(3)\rightarrow \Sigma A$ is the identity of $\Sigma A$. We can choose the retraction $r$ such that the composition $\Sigma SU(3) \overset{r}\rightarrow \Sigma A \overset{\phi_2}\rightarrow BSU(3)$ is equal to ${ad}^{-1}1_{SU(3)}$. Therefore the generalized Whitehead product $[{ad}^{-1}1_{SU(3)}, {ad}^{-1}1_{SU(3)}] \colon \Sigma(SU(3) \wedge SU(3)) \rightarrow BSU(3)$ also vanishes and thus the proof is completed. For $n=4$ and $5$, by the Proposition \ref{c9} we have the similar proof. $\quad\Box$
\subsection{The order of the commutator on $SU(4)$}
A. Kono and S. Theriault in \cite{[KT]} showed that the order of the commutator on $SU(3)$ is equal to $2^3\cdot 3 \cdot 5$. They also gave an upper bound on the number of homotopy types of gauge groups for principal $SU(3)$-bundle over an $n$-sphere. Here we will study this subject on $SU(4)$ localized at an odd prime $p$. The calculation of the $2$-component of the order of $SU(4)\wedge SU(4) \rightarrow SU(4)$ is very hard, so we decline to calculate it. We localize all spaces and maps at an odd prime $p$. For a prime $p$ and an integer $m$, let $\nu_p(m)$ be the largest
integer $r$ such that $p^r$ divides $m$. The following theorem was proved in \cite{[KT]}.
\begin{thm}\label{d1}
Let $G$ be a compact, connected Lie group and let $Y$ be a space. Fix a homotopy
class $[f]\in [\Sigma Y, BG]$. For any integer $k$, let $P_k \rightarrow \Sigma Y$ be the principal $G$-bundle induced by $kf$, and let $\mathcal{G}_k$ be its gauge group. If the order of the commutator $G\wedge G \overset{c}\rightarrow G$ is $m$, then the number of distinct $p$-local homotopy types for the gauge groups $\mathcal{G}_k$ is at most $\nu_p(m)+1$. $\quad\Box$
\end{thm}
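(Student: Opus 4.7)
The plan is to realise $\mathcal{G}_k$ as the homotopy fibre of a map built out of the commutator $c$, and then to deduce the count of $p$-local homotopy types from the fact that multiplication by a $p$-local unit is a self-equivalence of any $H$-space $p$-locally. Concretely, I would begin with the Atiyah--Bott identification $B\mathcal{G}_k \simeq \mathrm{Map}_{kf}(\Sigma Y, BG)$, where the subscript denotes the path component containing $kf$. Evaluating at the basepoint and looping once produces a fibre sequence
\begin{equation*}
\mathrm{Map}^{\ast}(\Sigma Y, G) \longrightarrow \mathcal{G}_k \longrightarrow G \xrightarrow{\,\delta_k\,} \mathrm{Map}^{\ast}(\Sigma Y, BG),
\end{equation*}
so $\mathcal{G}_k$ is the homotopy fibre of the connecting map $\delta_k$, and it suffices to understand how $\delta_k$ depends on $k$.

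Next, I would identify $\delta_k$ with $k\cdot\delta_1$ in the abelian group $[G, \mathrm{Map}^{\ast}(\Sigma Y, BG)]$. Using the equivalence $\mathrm{Map}^{\ast}(\Sigma Y, BG) \simeq \mathrm{Map}^{\ast}(Y, G)$ and taking adjoints, $\delta_k$ corresponds to a map $\hat{\delta}_k : G \wedge Y \to G$. An unwinding of the Borel-construction model for $B\mathcal{G}_k$ (essentially Lang's formula for the boundary map) shows that
\begin{equation*}
\hat{\delta}_k \simeq c \circ (1_G \wedge \widetilde{kf}),
\end{equation*}
where $\widetilde{kf} : Y \to G$ is the adjoint of $kf : \Sigma Y \to BG$. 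Since the adjoint bijection $[\Sigma Y, BG] \cong [Y, G]$ is additive and Samelson products are bilinear (as recalled in the introduction), this yields $\hat{\delta}_k \simeq k \cdot \hat{\delta}_1$, and hence $\delta_k \simeq k \cdot \delta_1$.

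Finally, I would exploit the order hypothesis. Write $r = \nu_p(m)$; then $p^{r} \cdot c \simeq \ast$ $p$-locally, so $p^{r} \cdot \delta_1 \simeq \ast$ as well. If $\nu_p(k) \geq r$, then $\delta_k \simeq \ast$ $p$-locally and $\mathcal{G}_k \simeq G \times \mathrm{Map}^{\ast}(\Sigma Y, G)$. If instead $0 \leq e = \nu_p(k) < r$, write $k = u p^{e}$ with $u \in \mathbb{Z}_{(p)}^{\times}$; then $\delta_k = u \cdot (p^{e} \delta_1)$, and since multiplication by the unit $u$ is a self-equivalence of the $H$-space $\mathrm{Map}^{\ast}(\Sigma Y, BG)$ $p$-locally, the homotopy fibres of $\delta_k$ and $p^{e} \delta_1$ are equivalent. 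Consequently $\mathcal{G}_k$ depends on $k$ only through $\min(\nu_p(k), r) \in \{0, 1, \ldots, r\}$, which yields the claimed bound of $\nu_p(m) + 1$ distinct $p$-local homotopy types.

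The main obstacle is the middle step: verifying that the connecting map of the evaluation fibration factors through $c$ as asserted. This is a careful adjunction argument tracing through the associated-bundle description of $B\mathcal{G}_k$; once it is in place, the rest of the proof is the relatively soft observation that, $p$-locally, every integer factors as a $p$-local unit times a power of $p$.
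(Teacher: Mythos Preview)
The paper does not prove this statement itself; it is quoted from Kono--Theriault \cite{[KT]} (note the $\Box$ immediately after the statement and the sentence ``The following theorem was proved in \cite{[KT]}'' preceding it). Your sketch is correct and is precisely the argument used in that reference: the Atiyah--Bott/Gottlieb identification $B\mathcal{G}_k\simeq\mathrm{Map}_{kf}(\Sigma Y,BG)$, the evaluation fibration, Lang's identification of the connecting map with the Samelson product $\langle 1_G,\widetilde{kf}\rangle$ (hence a factorisation through $c$ and linearity in $k$), and finally the fact that the $u$-th power map on the loop space $\mathrm{Map}^{\ast}(\Sigma Y,BG)\simeq\Omega\,\mathrm{Map}^{\ast}(Y,BG)$ is a $p$-local self-equivalence whenever $u\in\mathbb{Z}_{(p)}^{\times}$. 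There is nothing to compare against in the present paper beyond the citation.
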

Let $X$ be an $H$-space with multiplicative map $\mu \colon X \times X \rightarrow X$. The composite
$$\mu^* \colon \Sigma X\wedge X\rightarrow \Sigma (X\times X)\overset{\Sigma \mu} \longrightarrow \Sigma X$$
is known as the Hopf construction on $X$ (for details see \cite{[T]}). In particular, if $G$ is a simply-connected, simple compact Lie group then there is a Hopf construction $\mu^* \colon \Sigma G\wedge G\rightarrow \Sigma G$. The adjoint of the identity map on $SU(4)$ is the evaluation map $ev \colon \Sigma SU(4) \rightarrow BSU(4)$. Let $f$ be the composite $f \colon \Sigma^2\mathbb{C}P^3\overset{\Sigma \varepsilon_{3,4}}\longrightarrow  \Sigma SU(4) \overset{ev}\longrightarrow  BSU(4)$.
\begin{lem}\label{d2}
Localized at an odd prime $p$, the following diagram is homotopy commutative.
\begin{equation}
\begin{tikzpicture}[baseline=(current bounding box.center)]
\matrix(m)[matrix of math nodes,
 row sep=2em, column sep=2em,
 text height=1.5ex, text depth=0.20ex]
   {\Sigma SU(4)&BSU(4) \\ \Sigma^2\mathbb{C}P^3& BSU(4)\\}; \path[->,font=\scriptsize]
  (m-1-1) edge node[above] {$ev$}(m-1-2)
  (m-1-1) edge node[left] {$\theta$} (m-2-1)
  (m-1-2) edge node[right] {$||$}(m-2-2)
    (m-2-1) edge node[above] {$f$} (m-2-2);
 \end{tikzpicture}
 \end{equation}
where $\theta$ is a left homotopy inverse for $\Sigma \varepsilon_{3,4}$.
\end{lem}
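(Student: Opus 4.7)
The plan is as follows. Using the odd-primary splitting techniques of Sections \ref{b0}--\ref{c0} (in particular Lemma \ref{c1}, which implies that $\Sigma\varepsilon_{3,4}$ admits a left homotopy inverse at odd primes), we obtain a homotopy equivalence
\begin{equation*}
\Sigma SU(4)_{(p)} \simeq \Sigma^2 \mathbb{C}P^3 \vee C,
\end{equation*}
where $C$ is the cofiber of $\Sigma\varepsilon_{3,4}$, identifiable with $\Sigma(SU(4)/\varepsilon_{3,4}(\Sigma\mathbb{C}P^3))$ and having its cells in dimensions $\{9,11,13,16\}$. Let $\theta$ be the projection onto the $\Sigma^2\mathbb{C}P^3$ summand. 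Then $f\circ\theta$ restricts to $f$ on $\Sigma^2\mathbb{C}P^3$ and to zero on $C$, while $ev$ restricts to $f$ on $\Sigma^2\mathbb{C}P^3$ by the very definition of $f$. Hence the commutativity of the diagram is equivalent to the vanishing of $ev|_C$ in $[C,BSU(4)]_{(p)}$.

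For an arbitrary initial choice of left inverse $\theta_0$, write $g=(ev-f\circ\theta_0)|_C\in[C,BSU(4)]_{(p)}$; this class measures the failure of commutativity. If $g$ is nonzero, I modify $\theta_0$ to $\theta=\theta_0+\delta$, where $\delta\colon\Sigma SU(4)\to\Sigma^2\mathbb{C}P^3$ factors through $C$; such a modification preserves the left-inverse property and changes $g$ to $g-f\circ(\delta|_C)$. Consequently the lemma reduces to showing that $g$ is in the image of the induced map
\begin{equation*}
f_*\colon [C,\Sigma^2\mathbb{C}P^3]_{(p)} \longrightarrow [C,BSU(4)]_{(p)}.
\end{equation*}

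The main obstacle is verifying this realizability, which I would tackle cell-by-cell. For each sphere $S^n\subseteq C$ with $n\in\{9,11,13,16\}$, the task is to show that $ev|_{S^n}\in\pi_n(BSU(4))_{(p)}$ lies in the image of $f_*$. The expected key input is that these $p$-local homotopy groups of $BSU(4)$ are detected by the Chern character, via the unstable $K$-theory machinery already exploited in Sections \ref{b0} and \ref{c0}, so that their generators admit representatives factoring through $\Sigma^2\mathbb{C}P^3$ via $f$. Once these lifts are constructed on each cell, the corresponding corrections assemble into a single $\delta$, yielding the required $\theta$ and completing the proof.
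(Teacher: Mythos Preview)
Your reduction is sound: once $\Sigma SU(4)_{(p)}\simeq\Sigma^2\mathbb{C}P^3\vee C$, the diagram commutes for \emph{some} left inverse $\theta$ exactly when $ev|_C$ lies in the image of $f_*\colon[C,\Sigma^2\mathbb{C}P^3]_{(p)}\to[C,BSU(4)]_{(p)}$. But the decisive step---actually exhibiting such a lift---is left as a sketch, and the sketch has real problems. First, your citation of Lemma~\ref{c1} is off: that lemma concerns freeness of $[\Sigma\mathbb{C}P^a\wedge\Sigma\mathbb{C}P^b,\Omega W_n]$ and says nothing about a retraction of $\Sigma\varepsilon_{3,4}$. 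Second, and more seriously, the groups $\pi_n(BSU(4))_{(p)}=\pi_{n-1}(SU(4))_{(p)}$ for $n\in\{9,11,13,16\}$ are not torsion-free at small odd primes (e.g.\ $\pi_8(SU(4))\cong\mathbb{Z}/4!$ has a $\mathbb{Z}/3$ summand), so ``detected by the Chern character'' fails precisely where it matters. You would need genuine homotopy-group computations for both $BSU(4)$ and $\Sigma^2\mathbb{C}P^3$, together with an analysis of $f_*$ and of the secondary obstructions to gluing the cellwise lifts into a single $\delta$. None of this is carried out.

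The paper bypasses all of it with one geometric observation you are missing: the homotopy fibre of $ev\colon\Sigma SU(4)\to BSU(4)$ is the Hopf construction $\mu^*\colon\Sigma SU(4)\wedge SU(4)\to\Sigma SU(4)$. Using the Mimura--Nishida--Toda decomposition $SU(4)_{(p)}\simeq B(3,7)\times S^5$ one identifies the complementary summand as $S^{11}\vee\Sigma^6 B(3,7)$ and checks in mod-$p$ homology that $\mu^*$ maps onto it. Hence the splitting can be chosen so that $C$ factors through $\mu^*$, forcing $ev|_C\simeq\ast$ outright---no obstruction theory, no unstable homotopy groups. This is a strictly stronger conclusion ($g=0$, not merely $g\in\operatorname{im}f_*$) obtained with strictly less work.
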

\begin{proof}
By \cite{[MNT]}, there is a homotopy equivalence $SU(4)\simeq B(3, 7)\times S^5$, where $H^*(B(3, 7))\cong
\Lambda(x_3, x_7)$ and also there is a homotopy fibration $S^3\rightarrow B(3, 7)\rightarrow S^7$. Thus we have the
homotopy equivalence
$$\Sigma SU(4) \simeq \Sigma B(3, 7)\vee S^6 \vee \Sigma^6 B(3, 7). \quad(\star\star)$$
By \cite{[JW]}, we know that $B(3, 7)$ has a cell-structure $B(3, 7)\simeq S^3 \cup_{\nu} e^7 \cup e^{10}$, where $\nu\in \pi_6(S^3)$ is the attaching map. By [3, Lemma 4.1 ], we have that the top cell splits off after a suspension. This then gives us $\Sigma B(3, 7)\simeq (S^4 \cup e^8)\vee S^{11}$ and $\Sigma^6 B(3, 7)\simeq (S^9 \cup e^{13})\vee S^{16}$. Thus we can regard the evaluation map $ev$ as the wedge sum $\Sigma \varepsilon_{3,4} +g$ for some map $g\colon S^{11}\vee \Sigma^6 B(3, 7) \rightarrow BSU(4)$. Therefore we need to show $g \simeq \ast$. By \cite{[G]}, the homotopy fibre of the evaluation map $ev\colon \Sigma SU(4)\rightarrow BSU(4)$ is the Hopf construction $\mu^*\colon \Sigma SU(4)\wedge SU(4)\rightarrow  \Sigma SU(4)$. Note that $H_*(SU(4))=\Lambda(x, y, z)$, where $x, y, z$ have degrees $3, 5, 7$ respectively, and in mod-$p$ homology the restriction of $\mu^*$ to
$$S^{11}\vee \Sigma^6 B(3, 7) \simeq (\Sigma S^3 \wedge S^5)\vee (\Sigma S^3 \wedge S^7) \vee(\Sigma S^5 \wedge S^7) \vee(\Sigma S^3 \wedge S^5 \wedge S^7)$$
is an isomorphism onto the submodule of $\Sigma \Lambda (x, y, z)$ generated by the suspensions of $xy, xz,
yz$ and $xyz$. Thus we can choose the equivalence $(\star\star)$ so that the $S^{11}\vee \Sigma^6 B(3, 7)$ summand
of $\Sigma SU(4)$ factors through $\mu^*$. Doing so, we obtain $g \simeq \ast$. Therefore the evaluation map $ev$ factors through $f$ and the lemma follows.
\end{proof}
We will prove the following lemma that plays an important role in computing the order of the commutator on $SU(4)$.
\begin{lem}\label{d3}
Localized at an odd prime $p$, the order of the maps $SU(4)\wedge SU(4)\overset{c}\rightarrow SU(4)$
and $\Sigma\mathbb{C}P^3 \wedge \Sigma\mathbb{C}P^3 \overset{\varepsilon_{3,4}\wedge \varepsilon_{3,4}}\longrightarrow SU(4)\wedge SU(4)\overset{c}\rightarrow SU(4)$ are equal.
\end{lem}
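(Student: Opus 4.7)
The easy divisibility $|c\circ(\varepsilon_{3,4}\wedge\varepsilon_{3,4})|$ divides $|c|$ is immediate: any nullhomotopy of $N\cdot c$ restricts along $\varepsilon_{3,4}\wedge\varepsilon_{3,4}$ to a nullhomotopy of $N\cdot c\circ(\varepsilon_{3,4}\wedge\varepsilon_{3,4})$. The real content is the reverse divisibility, and the plan is to obtain it by passing to adjoints and feeding in Lemma \ref{d2}.

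Write $\tilde c\colon \Sigma(SU(4)\wedge SU(4))\to BSU(4)$ for the adjoint of $c$; the adjoint correspondence gives $|\tilde c|=|c|$, and similarly the adjoint of $c\circ(\varepsilon_{3,4}\wedge\varepsilon_{3,4})$ is $\tilde c\circ \Sigma(\varepsilon_{3,4}\wedge\varepsilon_{3,4})$, with the same order as $c\circ(\varepsilon_{3,4}\wedge\varepsilon_{3,4})$. The standard formula for the adjoint of a composition gives $\tilde c=ev\circ \Sigma c$, and substituting the factorization $ev\simeq f\circ\theta$ of Lemma \ref{d2} yields
\[
\tilde c \;\simeq\; f\circ\theta\circ\Sigma c, \qquad \tilde c\circ\Sigma(\varepsilon_{3,4}\wedge\varepsilon_{3,4}) \;\simeq\; f\circ\theta\circ\Sigma\bigl(c\circ(\varepsilon_{3,4}\wedge\varepsilon_{3,4})\bigr).
\]
So both adjoints factor through the same map $f\colon \Sigma^2\mathbb{C}P^3\to BSU(4)$, after first passing through $\Sigma^2\mathbb{C}P^3$ via $\theta\circ\Sigma(-)$.

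The key step is to exploit the retraction property $\theta\circ\Sigma\varepsilon_{3,4}\simeq 1_{\Sigma^2\mathbb{C}P^3}$ to promote the vanishing of $N\cdot\tilde c\circ\Sigma(\varepsilon_{3,4}\wedge\varepsilon_{3,4})$ into the vanishing of $N\cdot\tilde c$. Smashing $\theta$ with itself provides a left homotopy inverse $\theta\wedge\theta$ for $\Sigma\varepsilon_{3,4}\wedge\Sigma\varepsilon_{3,4}$, so the claim holds in $[\Sigma^2(SU(4)\wedge SU(4)),\, BSU(4)]$; the plan is to descend this retraction from the double-suspension to the single-suspension level by using the bilinearity of the Samelson product $c = \langle 1_{SU(4)}, 1_{SU(4)}\rangle$ in the H-space $SU(4)$. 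Concretely, $\theta\circ\Sigma c\colon \Sigma(SU(4)\wedge SU(4))\to \Sigma^2\mathbb{C}P^3$ can be analyzed one factor at a time: first apply $\theta\wedge 1$ (which is a homotopy inverse on the first factor) and then exploit bilinearity and the unstable $K$-theoretic detection of Samelson products developed in Sections \ref{b0}--\ref{c0} to handle the second factor. This forces the pre-composition homomorphism by $\Sigma(\varepsilon_{3,4}\wedge\varepsilon_{3,4})$ to be injective on the cyclic subgroup generated by $\tilde c$, giving $|\tilde c|\mid|\tilde c\circ\Sigma(\varepsilon_{3,4}\wedge\varepsilon_{3,4})|$.

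The main obstacle I expect is exactly this suspension bookkeeping: the retraction $\theta\wedge\theta$ is naturally a $\Sigma^2$-level phenomenon, whereas the map $\Sigma(\varepsilon_{3,4}\wedge\varepsilon_{3,4})$ sits at the $\Sigma^1$-level, so one must verify that the detection of the commutator by the Chern character classes used in Sections \ref{b0}--\ref{c0} is not washed out by the single extra suspension. Once that is checked, combining the two divisibilities yields the desired equality.
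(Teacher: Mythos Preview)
Your proposal correctly identifies the easy direction and the role of Lemma~\ref{d2}, but the hard direction has a genuine gap. The retraction $\theta\wedge\theta$ lives on $\Sigma SU(4)\wedge\Sigma SU(4)\simeq\Sigma^2(SU(4)\wedge SU(4))$, whereas the adjoint $\tilde c$ is defined on $\Sigma(SU(4)\wedge SU(4))$; your ``descent'' via bilinearity and $K$-theoretic detection is not an argument but a hope. Concretely, to use naturality of the Whitehead product you would need $\theta=\Sigma\theta'$ for some $\theta'\colon SU(4)\to\Sigma\mathbb{C}P^3$, and nothing guarantees that the suspension splitting of $\Sigma SU(4)$ desuspends. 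Appealing to the Chern-character detection from Sections~\ref{b0}--\ref{c0} does not help either: that machinery computes the order of the restricted Samelson product, which is exactly the number you are trying to promote, so it cannot by itself bound the order of $c$.

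The paper's argument avoids this suspension bookkeeping entirely by using the standard characterization of Whitehead products via the cofibration $\Sigma A\vee\Sigma B\to\Sigma A\times\Sigma B\to\Sigma(A\wedge B)$: the product $[f,f]$ has order dividing $t$ precisely when $t\cdot(f\vee f)$ extends over $\Sigma^2\mathbb{C}P^3\times\Sigma^2\mathbb{C}P^3$. Given such an extension $\mu$, one simply precomposes with $\theta\times\theta$ to obtain $\bar\mu\colon\Sigma SU(4)\times\Sigma SU(4)\to BSU(4)$; since $f\circ\theta\simeq ev$, the restriction of $\bar\mu$ to the wedge is $t\cdot(ev\vee ev)$, so $t\cdot[ev,ev]\simeq\ast$. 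This uses $\theta$ only as a map of spaces (via $\theta\times\theta$), never requiring it to desuspend, and that is the missing idea in your approach.
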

\begin{proof}
Let the composite $c \circ (\varepsilon_{3,4}\wedge \varepsilon_{3,4})$ have order $t$. By taking adjoint, the composite
$$[f, f]\colon \Sigma^2\mathbb{C}P^3 \wedge \Sigma\mathbb{C}P^3 \overset{\Sigma\varepsilon_{3,4}\wedge \varepsilon_{3,4}}\longrightarrow \Sigma SU(4)\wedge SU(4)\overset{[ev,ev]}\longrightarrow BSU(4)$$
has order $t$, where the Whitehead product $[ev, ev]$ is the adjoint of the commutator map
$SU(4)\wedge SU(4)\overset{c}\rightarrow SU(4)$. On the other hand, $[f, f]$ is homotopic to the composite
$$ \Sigma^2\mathbb{C}P^3 \wedge \Sigma\mathbb{C}P^3 \rightarrow  \Sigma^2\mathbb{C}P^3 \vee \Sigma^2\mathbb{C}P^3\overset{f\vee f}\longrightarrow BSU(4).$$
Since $[f, f]$ has order $t$, $t\cdot(f\vee f)$ extends to a map $\mu\colon \Sigma^2\mathbb{C}P^3 \times \Sigma^2\mathbb{C}P^3 \rightarrow BSU(4)$. Now by Lemma \ref{d2} there is a map $\bar{\mu}\colon \Sigma SU(4)\times \Sigma SU(4)\rightarrow BSU(4)$ such that the restriction of $\bar{\mu}$ on $\Sigma SU(4)\vee\Sigma SU(4)$ is $t\cdot(ev\vee ev)$. Thus the composite
$$\Sigma SU(4)\wedge SU(4) \rightarrow \Sigma SU(4)\vee\Sigma SU(4)\overset{t\cdot(ev\vee ev)}\longrightarrow  BSU(4)$$
is null homotopic. We know that this composite is homotopic to $t\cdot [ev, ev]$. Thus $[ev, ev]$ has order $t$, which implies that its adjoint $c$ also has order $t$. To converse direction, since the composite $c \circ (\varepsilon_{3,4}\wedge \varepsilon_{3,4})$ factors through $c$, therefore the order of $c \circ (\varepsilon_{3,4}\wedge \varepsilon_{3,4})$ less than the order of $c$ and proof is complete.
\end{proof}
\textbf{Proof of Theorem \ref{a4}}\\
By Proposition \ref{c9} and Lemma \ref{d3}, we can conclude the Theorem \ref{a4}. $\quad\Box$\\\\
Now by Theorems \ref{a4} and \ref{d1}, we get the following corollary.
\begin{cor}\label{d4}
For any homotopy class $[f] \in [\Sigma Y, BSU(4)]$ the number of distinct $p$-local homotopy types for the gauge groups $\mathcal{G}_k$ is at most $3$ if $p=3$, $2$ if $p=5$ or $p=7$, and $1$ if $p > 7$. $\quad\Box$
\end{cor}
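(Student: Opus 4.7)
The plan is to apply Theorem \ref{d1} directly, using the order of the commutator computed in Theorem \ref{a4} as the input. Since Theorem \ref{a4} gives that the $p$-local order of the commutator $SU(4)\wedge SU(4)\overset{c}\rightarrow SU(4)$ equals $m = 3^2\cdot 5 \cdot 7$, and since Theorem \ref{d1} bounds the number of distinct $p$-local homotopy types of $\mathcal{G}_k$ by $\nu_p(m)+1$, everything reduces to reading off the $p$-adic valuations of $m$.

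First I would fix a homotopy class $[f]\in[\Sigma Y, BSU(4)]$, so that for each integer $k$ the gauge group $\mathcal{G}_k$ of the principal $SU(4)$-bundle $P_k\to\Sigma Y$ classified by $kf$ is defined. The hypotheses of Theorem \ref{d1} apply with $G=SU(4)$, giving the bound $\nu_p(m)+1$ on the number of distinct $p$-local homotopy types among the $\mathcal{G}_k$.

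Next I would compute $\nu_p(m)$ case by case. For $p=3$, we have $\nu_3(3^2\cdot 5\cdot 7)=2$, so the bound is $3$; for $p=5$, $\nu_5(m)=1$, giving $2$; for $p=7$, $\nu_7(m)=1$, again giving $2$; and for any odd prime $p>7$, $\nu_p(m)=0$, so the bound is $1$. Combining these with the conclusion of Theorem \ref{d1} yields exactly the statement of the corollary.

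Since this is a direct combination of two already-established results, there is no serious obstacle; the real content lies in Theorem \ref{a4} (itself derived from Proposition \ref{c9} and Lemma \ref{d3}), not in this consequence. The only point to be careful about is that Theorem \ref{d1} is stated for arbitrary $[f]\in[\Sigma Y, BG]$, so the conclusion genuinely holds uniformly in the homotopy class.
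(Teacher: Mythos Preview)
Your proposal is correct and is exactly the paper's approach: the corollary is stated as an immediate consequence of Theorems \ref{a4} and \ref{d1}, and your computation of $\nu_p(3^2\cdot 5\cdot 7)+1$ for each odd prime $p$ is the entire content.
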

\section{The group $[\Sigma^{4m-1}Q_{n-m+1}, Sp(n)]$}\label{e0}
Our main goal in this section is to study the Samelson product $\langle \varepsilon_{m,n},\epsilon_{m,n} \rangle$, where the map $\epsilon_{m,n}\colon Q_{n-m+1}\rightarrow Sp(n)$ is the inclusion map. We denote $Sp(\infty)/Sp(n)$ by $X_n$ and $[X,Sp(n)]$ by $Sp_n(X)$. Let $Q_n$ be the symplectic quasi projective space of rank $n$ defined in \cite{[J]}. This space has the following cellular structure
$$Q_n=S^3\cup e^7\cup e^{11}\cup \cdots \cup e^{4n-1}.$$
Put $X=S^{4m-1}\wedge Q_{n-m+1}$. Note that $X$ has the following cellular structure
$$X\simeq S^{4m+2}\cup e^{4m+6}\cup e^{4m+10}\cup \cdots \cup e^{4n+2}.$$
Recall that as an algebra $H^*(Sp(n);\mathbb{Z})= \bigwedge (y_3,y_7,\cdots, y_{4n-1})$, $H^*(Sp(\infty);\mathbb{Z})= \bigwedge (y_3,y_7,\cdots)$ and $H^*(BSp(\infty);\mathbb{Z})= \mathbb{Z}[q_1,q_2,\cdots]$, where $y_{4i-1}=\sigma q_i$, $\sigma$ is the cohomology suspension and $q_i$ is the $i-$th symplectic Pontrjagin class. Consider the projection map $\pi:Sp(\infty)\rightarrow X_n$, also an algebra we have
\begin{align*}
& H^*(X_n;\mathbb{Z})= \bigwedge (\bar{y}_{4n+3}, \bar{y}_{4n+7}, \cdots), \\
& H^*(\Omega X_n;\mathbb{Z})= \mathbb{Z}\{b_{4n+2}, b_{4n+6}, \cdots, b_{8n+2} \} \quad (*\leq 8n+2 ),
\end{align*}
where $\pi^*(\bar{y}_{4i+3})=y_{4i+3}$ and $b_{4n+4j-2}=\sigma (\bar{y}_{4n+4j-1})$. Consider the following fibre sequence
\begin{eqnarray}
\Omega Sp(\infty)\overset{\Omega \pi}\longrightarrow \Omega X_n\overset{\delta}\longrightarrow Sp(n) \overset{j}\longrightarrow Sp(\infty)\overset{\pi}\longrightarrow X_n.
\end{eqnarray}
By applying the functor $[X,\quad]$ to fibration $(5.1)$, we get the following exact sequence
$$
[X,\Omega Sp(\infty)]\overset{(\Omega \pi)_*}\longrightarrow [X,\Omega X_n]\overset{\delta_*}\longrightarrow Sp_n(X) \overset{j_*}\longrightarrow [X,Sp(\infty)]\overset{\pi_*}\longrightarrow [X,X_n].\quad (\star)
$$
Note that $X_n$ has the cellular structure $X_n\simeq S^{4n+3}\cup e^{4n+7}\cup e^{4n+11}\cup \cdots$, also we have $\Omega X_n\simeq S^{4n+2}\cup e^{4n+6}\cup e^{4n+10}\cup \cdots$. According to the $CW$-structure of $X_n$, we have the following isomorphisms
\begin{equation*}
\pi_i(X_n)=0 \quad (for \hspace{2mm} i\leq 4n+2), \qquad \pi_{4n+3}(X_n)\cong \mathbb{Z}.
\end{equation*}
Observe that $[X, Sp(\infty)]\cong [\Sigma X, BSp(\infty)]\cong {\widetilde{KSp}}^{-1}(X)$. Apply ${\widetilde{KSp}}^{-1}$ to the cofibration sequence $S^{4m-1}\wedge Q_{n-m}\rightarrow X\rightarrow S^{4n+2}$. We know that ${\widetilde{KSp}}^{-1}(S^{4n+2})=0$, for every $n\geq 1$, so by use of induction, we can conclude that ${\widetilde{KSp}}^{-1}(X)=0$. On the other hand, we know that $\Omega X_n$ is $(4n+1)$-connected and $H^{4n+2}(\Omega X_n)\cong \mathbb{Z}$ which is generated by $b_{4n+2}=\sigma(\bar{y}_{4n+3})$. The map $b_{4n+2}\colon\Omega X_n \rightarrow K(\mathbb{Z},4n+2)$ is a loop map and is a $(4n+3)$-equivalence. Since dim$X\leq 4n+2$, the map $(b_{4n+2})_*\colon[X,\Omega X_n] \rightarrow H^{4n+2}(X)$ is an isomorphism of groups. Thus we rewrite the exact sequence $(\star)$ as the following exact sequence
\begin{eqnarray*}
{\widetilde{KSp}}^{-2}(X)\overset{\psi}\longrightarrow H^{4n+2}(X)\rightarrow Sp_n(X)\rightarrow 0,
\end{eqnarray*}
where we use the isomorphism ${\widetilde{KSp}}^{-i}(X)\cong [\Sigma^iX,BSp(\infty)]$. So we have the exact sequence
\begin{equation*}
0\rightarrow Coker \psi \overset{\iota}\longrightarrow Sp_n(X)\rightarrow 0.
\end{equation*}
Therefore we get the following lemma.
\begin{lem}\label{e1}
$Sp_n(X)\cong Coker \psi$. $\quad \Box$
\end{lem}
In what follows, we will calculate the image of $\psi$.\par
Let $Y$ be a $CW$-complex with dim $Y \leq 4n+ 2$, we denote $[Y, U(2n+1)]$ by $U_{2n+1}(Y)$. By [7, Theorem 1.1], there is an exact sequence
\begin{equation*}
\tilde{K}^{-2}(Y)\overset{\varphi}\longrightarrow H^{4n+2}(Y)\rightarrow U_{2n+1}(Y)\rightarrow \tilde{K}^{-1}(Y)\rightarrow 0,
\end{equation*}
for any $f\in \tilde{K}^{-2}(Y)$ the map $\varphi$ is defined as follows
$$\varphi(f)=(2n+1)!ch_{2n+1}(f),$$
where $ch_i$ denotes the $2i$-dimensional part of the Chern character. Also, we use the isomorphism $\tilde{K}^{-i}(Y)\cong [\Sigma^iY,BU(\infty)]$. In this paper, we denote both the canonical inclusion $Sp(n)\hookrightarrow U(2n)$ and the induced
map ${\widetilde{KSp}}^{*}(X) \rightarrow {\tilde{K}}^{*}(X)$ by $c'$. By [18, Theorem 1.3], there is a commutative diagram
\begin{equation}
\begin{tikzpicture}[baseline=(current bounding box.center)]
\matrix(m)[matrix of math nodes,
 row sep=2em, column sep=2em,
 text height=1.5ex, text depth=0.20ex]
   {{\widetilde{KSp}}^{-2}(X)& H^{4n+2}(X)\\ {\tilde{K}}^{-2}(X)& H^{4n+2}(X).\\}; \path[->,font=\scriptsize]
  (m-1-1) edge node[above] {$\psi$}(m-1-2)
  (m-1-1) edge node[left] {$c'$} (m-2-1)
  (m-1-2) edge node[right] {$(-1)^{n+1}$}(m-2-2)
      (m-2-1) edge node[above] {$\varphi$} (m-2-2);
 \end{tikzpicture}
 \end{equation}
Therefore to calculate the image of $\psi$, we first calculate the image of $\varphi$. We denote the free abelian group with a basis $e_1,e_2,\cdots$, by $\mathbb{Z}\{ e_1,e_2,\cdots \}$ and the direct sum of $k$ copies of $\mathbb{Z}$ by $\mathbb{Z}^k$. We need the following lemmas.
\begin{lem}\label{e2}
The group ${\widetilde{KSp}}^{-2}(\Sigma^{4m}Q_{n-m})$ is isomorphic to a finite cyclic group.
\end{lem}
\begin{proof}
To prove this lemma we will use induction. Note that there are isomorphisms
\begin{align*}
&{\widetilde{KSp}}^{-2}(S^{4i+2})\cong \mathbb{Z}, \quad &&{\widetilde{KSp}}^{-2}(S^{4n+1})\cong 0,\\ &{\widetilde{KSp}}^{-2}(S^{4n})\cong \left\{ \begin{array}{ll}
0 & \textrm{if $n$ is even },\\
\mathbb{Z}/2\mathbb{Z} & \textrm{if $n$ is odd},\\
\end{array} \right.\quad &&{\widetilde{KSp}}^{-2}(S^{4n-1})\cong \left\{ \begin{array}{ll}
\mathbb{Z}/2\mathbb{Z} & \textrm{if $n$ is even },\\
0 & \textrm{if $n$ is odd}.\\
\end{array} \right.
\end{align*}
First, for two cases $Q_2$ and $Q_3$,  we calculate the groups ${\widetilde{KSp}}^{-2}(\Sigma^{4m}Q_2)$ and ${\widetilde{KSp}}^{-2}(\Sigma^{4m}Q_3)$, respectively. Consider the homotopy cofibration sequence
$$S^{4m+6}\overset{\Sigma^{4m}v'}\longrightarrow S^{4m+3}\rightarrow \Sigma^{4m} Q_2\rightarrow S^{4m+7}\overset{\Sigma^{4m+1}v'}\longrightarrow S^{4m+4},$$
where $v'$ is a generator of $\pi_6(S^3)\cong \mathbb{Z}_{12}$. By applying ${\widetilde{KSp}}^{-2}$, we get the exact sequence
\begin{align*}
{\widetilde{KSp}}^{-2}(S^{4m+4})\overset{(\Sigma^{4m+1}v')^*}\longrightarrow {\widetilde{KSp}}^{-2}(S^{4m+7})\rightarrow {\widetilde{KSp}}^{-2}(\Sigma^{4m} Q_2)&\rightarrow {\widetilde{KSp}}^{-2}(S^{4m+3})\\&\rightarrow{\widetilde{KSp}}^{-2}(S^{4m+6}).
\end{align*}
Now, in cases where $m$ is even and $m$ is odd, we obtain the exact sequences
$$\mathbb{Z}_2\overset{(\Sigma^{4m+1}v')^*}\longrightarrow \mathbb{Z}_2 \rightarrow  {\widetilde{KSp}}^{-2}(\Sigma^{4m} Q_2) \rightarrow 0,\quad
0 \rightarrow  {\widetilde{KSp}}^{-2}(\Sigma^{4m} Q_2) \rightarrow \mathbb{Z}_2\rightarrow \mathbb{Z},$$
respectively. Since $\mathbb{Z}$ is torsion-free, any map $\mathbb{Z}_2\rightarrow \mathbb{Z}$ is zero map, thus in case $m$ is odd, by exactness we obtain the group ${\widetilde{KSp}}^{-2}(\Sigma^{4m} Q_2)$ is isomorphic to $\mathbb{Z}_2$. Let $m$ be even. Since $Sp(\infty)$ is homotopy euivalent to $\Omega^4O(\infty)$, we can determine the map ${\widetilde{KSp}}^{-2}(S^{4m+4})\overset{(\Sigma^{4m+1}v')^*}\longrightarrow {\widetilde{KSp}}^{-2}(S^{4m+7})$ by $(\Sigma^{4m+1}v')^*\colon \pi_{4m+9}(SO(\infty))\rightarrow \pi_{4m+12}(SO(\infty))$. Let $t_1$ be a generator of $\pi_{4m+9}(SO(\infty))\cong \mathbb{Z}_2$. Then $(\Sigma^{4m+1}v')^* (t_1)$ is the composite $S^{4m+12}\overset{\Sigma^{4m+6}v'} \longrightarrow S^{4m+9}\overset{t_1} \longrightarrow SO(\infty)$. Note that $t_1\circ \Sigma^{4m+6}v'\simeq t_1\circ 2v_{4m+9}\simeq 2t_1\circ v_{4m+9}$ is null homotopic, where by \cite{[T1]} we have $\Sigma^{4m+6}v'\simeq 2v_{4m+9}$, stably. Thus the map $(\Sigma^{4m+1}v')^*$ is the zero map. Therefore we can conclude that the group ${\widetilde{KSp}}^{-2}(\Sigma^{4m} Q_2)$ is also isomorphic to $\mathbb{Z}_2$.\par
Now, we study the case $ Q_3$. By applying ${\widetilde{KSp}}^{-2}$ to the homotopy cofibration sequence
$$S^{4m+10}\overset{\theta}\longrightarrow \Sigma^{4m}Q_2\rightarrow \Sigma^{4m}Q_3\rightarrow S^{4m+11}\overset{\Sigma\theta}\longrightarrow \Sigma^{4m+1}Q_2,$$
we get the following exact sequence
\begin{align*}
{\widetilde{KSp}}^{-2}(\Sigma^{4m+1}Q_2)\overset{(\Sigma\theta)^*}\longrightarrow {\widetilde{KSp}}^{-2}(S^{4m+11})\rightarrow {\widetilde{KSp}}^{-2}(\Sigma^{4m} Q_3)&\rightarrow {\widetilde{KSp}}^{-2}(\Sigma^{4m} Q_2)\\&\overset{\theta^*}\longrightarrow {\widetilde{KSp}}^{-2}(S^{4m+10}).
\end{align*}
Now, in cases where $m$ is even and $m$ is odd, we obtain the exact sequences
$$0\rightarrow {\widetilde{KSp}}^{-2}(\Sigma^{4m} Q_3)\rightarrow \mathbb{Z}_2\overset{\theta^*}\rightarrow \mathbb{Z},\quad
\mathbb{Z}_2 \overset{(\Sigma\theta)^*}\rightarrow \mathbb{Z}_2\rightarrow {\widetilde{KSp}}^{-2}(\Sigma^{4m} Q_3) \rightarrow \mathbb{Z}_2\overset{\theta^*}\rightarrow \mathbb{Z}, $$
respectively. When $m$ is even then by exactness we obtain the group ${\widetilde{KSp}}^{-2}(\Sigma^{4m} Q_3)$ is isomorphic to $\mathbb{Z}_2$. Let $m$ be odd. We show that the map ${\widetilde{KSp}}^{-2}(\Sigma^{4m+1} Q_2)\overset{(\Sigma\theta)^*}\longrightarrow {\widetilde{KSp}}^{-2}(S^{4m+11})$ is trivial. Let $t_1$ be a generator of ${\widetilde{KSp}}^{-2}(\Sigma^{4m+1} Q_2)\cong [\Sigma^{4m+1} Q_2,\Omega Sp(\infty)]$. Then we know that $(\Sigma\theta)^*(t_1)$ is the composite $ S^{4m+11}\overset{\Sigma\theta} \longrightarrow \Sigma^{4m+1} Q_2\overset{t_1} \longrightarrow \Omega Sp(\infty)$, where the map $t_1$ comes from the composition $\Sigma^{4m+1} Q_2\overset{q}\rightarrow S^{4m+8}\overset{s} \rightarrow \Omega Sp(\infty) $, where $q$ is the pinch map to the top cell and $s$ generates ${\widetilde{KSp}}^{-2}(S^{4m+8})\cong \mathbb{Z}_2$. Consider the following commutative diagram
\begin{equation*}
\begin{tikzpicture}[baseline=(current bounding box.center)]
\matrix(m)[matrix of math nodes,
 row sep=2em, column sep=2em,
 text height=1.5ex, text depth=0.20ex]
   {S^{4m+11}& \Sigma^{4m+1} Q_2& \Omega Sp(\infty)\\ & S^{4m+8}& \Omega Sp(\infty)\\}; \path[->,font=\scriptsize]
  (m-1-1) edge node[above] {$\Sigma\theta$}(m-1-2)
  (m-1-1) edge node[left] {$\Sigma^{4m+5}\nu'$} (m-2-2)
  (m-1-2) edge node[above] {$s\circ q$}(m-1-3)
  (m-1-2) edge node[right] {$q$}(m-2-2)
  (m-1-3) edge node[right] {$||$}(m-2-3)
      (m-2-2) edge node[above] {$s$} (m-2-3);
 \end{tikzpicture}
 \end{equation*}
where the square clearly commutes and since $\Sigma^{4m+5}\nu'$ generates $\pi_{4m+11}(S^{4m+8})$, the triangle is also commutes. We wish to show that the upper direction around the diagram is trivial. We can determine the map ${\widetilde{KSp}}^{-2}(S^{4m+8})\rightarrow {\widetilde{KSp}}^{-2}(S^{4m+11})$ by $ \pi_{4m+13}(SO(\infty))\rightarrow \pi_{4m+16}(SO(\infty))$ that sends the generator $s$ to the composite $S^{4m+16}\overset{\Sigma^{4m+10}v'} \longrightarrow S^{4m+13}\overset{s} \longrightarrow SO(\infty)$. Note that $s\circ \Sigma^{4m+10}v'\simeq s\circ 2v_{4m+13}\simeq 2s\circ v_{4m+13}$ is null homotopic. Thus the map $(\Sigma\theta)^*$ is the zero map. Therefore we can conclude that the group ${\widetilde{KSp}}^{-2}(\Sigma^{4m}Q_3)$ is also isomorphic to $\mathbb{Z}_2\oplus \mathbb{Z}_2$ or $\mathbb{Z}_4$.\par
Now, by the induction, we let the Lemma hold for the case $Q_{n-m-1}$. Consider the cofibration sequence
$$S^{4n-2}\overset{\theta}\longrightarrow \Sigma^{4m}Q_{n-m-1}\rightarrow \Sigma^{4m} Q_{n-m}\rightarrow S^{4n-1} \overset{\Sigma\theta}\longrightarrow \Sigma^{4m+1} Q_{n-m-1}.$$
This sequence induces an exact sequence
\begin{align*}
{\widetilde{KSp}}^{-2}(\Sigma^{4m+1} Q_{n-m-1})\overset{(\Sigma\theta)^*}\longrightarrow {\widetilde{KSp}}^{-2}(S^{4n-1})\rightarrow {\widetilde{KSp}}^{-2}(\Sigma^{4m} Q_{n-m})&\rightarrow {\widetilde{KSp}}^{-2}(\Sigma^{4m} Q_{n-m-1})\\& \overset{\theta^*}\rightarrow {\widetilde{KSp}}^{-2}(S^{4n-2}).
\end{align*}
Now, in cases where $n$ is even and $n$ is odd, we get the exact sequences
$$\overset{(\Sigma\theta)^*}\longrightarrow \mathbb{Z}_2\rightarrow {\widetilde{KSp}}^{-2}(\Sigma^{4m} Q_{n-m})\rightarrow {\widetilde{KSp}}^{-2}(\Sigma^{4m} Q_{n-m-1}) \overset{\theta^*}\rightarrow \mathbb{Z},$$
$$0\rightarrow {\widetilde{KSp}}^{-2}(\Sigma^{4m} Q_{n-m})\rightarrow {\widetilde{KSp}}^{-2}(\Sigma^{4m} Q_{n-m-1}) \overset{\theta^*}\rightarrow \mathbb{Z},$$
where by the assumption of induction, the group ${\widetilde{KSp}}^{-2}(\Sigma^{4m} Q_{n-m-1})$ is isomorphic to a finite cyclic group. Therefore in both cases by exactness, we can conclude that the group ${\widetilde{KSp}}^{-2}(\Sigma^{4m} Q_{n-m})$ is also isomorphic to a finite cyclic group.
\end{proof}
\begin{lem}\label{e3}
The following hold:\\
$(a)$: there is an isomorphism ${\widetilde{KSp}}^{-2}(X)\rightarrow {\widetilde{KSp}}^{-2}(S^{4m+2})\oplus \cdots \oplus  {\widetilde{KSp}}^{-2}(S^{4n+2})\cong \mathbb{Z}^{n-m+1}$ such that ${\widetilde{KSp}}^{-2}(X)=\mathbb{Z}\{\xi_1,\xi_2,\cdots ,\xi_{n-m+1}\}$, where $\xi_1\in {\widetilde{KSp}}^{-2}(S^{4m+2})$, $\xi_2\in {\widetilde{KSp}}^{-2}(S^{4m+6})$, $\cdots$, and also $\xi_{n-m+1}\in {\widetilde{KSp}}^{-2}(S^{4n+2})$,\\
$(b)$ there is an isomorphism ${\tilde{K}}^{-2}(X)\rightarrow {\tilde{K}}^{-2}(S^{4m+2}) \oplus \cdots \oplus {\tilde{K}}^{-2}(S^{4n+2}) \cong \mathbb{Z}^{n-m+1}$ such that ${\tilde{K}}^{-2}(X)=\mathbb{Z}\{\xi'_1,\xi'_2,\cdots ,\xi'_{n-m+1}\}$, where $\xi'_1\in {\tilde{K}}^{-2}(S^{4m+2})$, $\xi'_2\in {\tilde{K}}^{-2}(S^{4m+6})$, $\cdots$, and also $\xi'_{n-m+1}\in {\tilde{K}}^{-2}(S^{4n+2})$,\\
$(c):$ if $m$ is even integer then
\begin{displaymath}
c'(\xi_1)=\xi'_1,\quad c'(\xi_2)=2\xi'_2,\quad \cdots, \quad c'(\xi_{n-m+1}) = \left\{ \begin{array}{ll}
\xi'_{n-m+1} & \textrm{if $n$ is even },\\
2\xi'_{n-m+1} & \textrm{if $n$ is odd },\\
\end{array} \right.
\end{displaymath}
and if $m$ is odd integer then
\begin{displaymath}
c'(\xi_1)=2\xi'_1,\quad c'(\xi_2)=\xi'_2,\quad \cdots, \quad c'(\xi_{n-m+1}) = \left\{ \begin{array}{ll}
\xi'_{n-m+1} & \textrm{if $n$ is even },\\
2\xi'_{n-m+1} & \textrm{if $n$ is odd }.\\
\end{array} \right.
\end{displaymath}
\end{lem}
\begin{proof}
First, we consider the case $Q_2$. Put $L_1={\widetilde{KSp}}^{-2}(S^{4m+3})$ and $L_2={\widetilde{KSp}}^{-2}(S^{4m+5})$. The cofibration sequence
$$S^{4m+2}\rightarrow S^{4m-1}\wedge Q_2\rightarrow S^{4m+6}$$
induces the following commutative diagram of exact sequences
\begin{equation*}
\begin{tikzpicture}[baseline=(current bounding box.center)]
\matrix(m)[matrix of math nodes,
 row sep=2em, column sep=2em,
 text height=1.5ex, text depth=0.20ex]
  {L_1&{\widetilde{KSp}}^{-2}(S^{4m+6})&{\widetilde{KSp}}^{-2}(S^{4m-1}\wedge Q_2)&{\widetilde{KSp}}^{-2}(S^{{4m+2}})&L_2\\0&{\tilde{K}}^{-2}(S^{4m+6})&{\tilde{K}}^{-2}(S^{4m-1}\wedge Q_2)&{\tilde{K}}^{-2}(S^{{4m+2}})&0,\\}; \path[->,font=\scriptsize]
  (m-1-1) edge (m-1-2)
  (m-1-2) edge (m-1-3)
  (m-1-3) edge (m-1-4)
  (m-1-4) edge (m-1-5)
  (m-1-2) edge node[left] {$c'=\sigma_j$}(m-2-2)
  (m-1-3) edge node[left] {$c'$}(m-2-3)
  (m-1-4) edge node[left] {$c'=\sigma_i$}(m-2-4)
      (m-2-1) edge (m-2-2)
  (m-2-2) edge (m-2-3)
  (m-2-3) edge (m-2-4)
  (m-2-4) edge (m-2-5);
 \end{tikzpicture}
 \end{equation*}
where $\sigma_i=1, \sigma_j=2 $ when $m$ is even and $\sigma_i=2, \sigma_j=1$ when $m$ is odd, (for example, see \cite{[KK]}). Note that there is an isomorphism ${\tilde{K}}^{-2}(S^{2i})\cong\mathbb{Z}$. We know that $L_1$ is isomorphic to zero if $m$ is even and is isomorphic to $\mathbb{Z}_2$ if $m$ is odd. Also, we have that $L_2$ is zero. So when $m$ is odd then we get the following exact sequence
$$\mathbb{Z}_2 \rightarrow \mathbb{Z} \rightarrow {\widetilde{KSp}}^{-2}(S^{4m-1}\wedge Q_2)\rightarrow \mathbb{Z} \rightarrow 0.$$
Thus by the exactness we get ${\widetilde{KSp}}^{-2}(S^{4m-1}\wedge Q_2)$ is isomorphic to $\mathbb{Z}\oplus \mathbb{Z}$. When $m$ is even then we get the following exact sequence
$$0\rightarrow \mathbb{Z} \rightarrow {\widetilde{KSp}}^{-2}(S^{4m-1}\wedge Q_2)\rightarrow \mathbb{Z} \rightarrow 0.$$
Thus this exact sequence splits and we get ${\widetilde{KSp}}^{-2}(S^{4m-1}\wedge Q_2)$ is isomorphic to $\mathbb{Z}\oplus \mathbb{Z}$. Note that in both cases ${\tilde{K}}^{-2}(S^{4m-1}\wedge Q_2)$ is a free abelian group isomorphic to $\mathbb{Z}\oplus \mathbb{Z}$. Therefore we have
$${\widetilde{KSp}}^{-2}(S^{4m-1}\wedge Q_2)=\mathbb{Z}\{\xi_1,\xi_2\}, \quad {\tilde{K}}^{-2}(S^{4m-1}\wedge Q_2)=\mathbb{Z}\{\xi'_1,\xi'_2\},$$
where $\xi_1\in {\widetilde{KSp}}^{-2}(S^{4m+2})$, $\xi_2\in {\widetilde{KSp}}^{-2}(S^{4m+6})$, $\xi'_1\in {\tilde{K}}^{-2}(S^{4m+2})$ and $\xi'_2\in {\tilde{K}}^{-2}(S^{4m+6})$.\par
Put $L_3={\widetilde{KSp}}^{-2}(S^{4m}\wedge Q_{n-m})$. There is a cofibration sequence
$$S^{4n+1}\overset{\theta}\rightarrow S^{4m-1}\wedge Q_{n-m} \rightarrow X\rightarrow S^{4n+2}\overset{\Sigma\theta}\rightarrow S^{4m} \wedge Q_{n-m}.$$
This sequence induces the following commutative diagram of exact sequences
\begin{equation*}
\begin{tikzpicture}[baseline=(current bounding box.center)]
\matrix(m)[matrix of math nodes,
 row sep=2em, column sep=2em,
 text height=1.5ex, text depth=0.20ex]
  {L_3&{\widetilde{KSp}}^{-2}(S^{4n+2})&{\widetilde{KSp}}^{-2}(X)&{\widetilde{KSp}}^{-2}(S^{4m-1}\wedge Q_{n-m})&0\\0&{\tilde{K}}^{-2}(S^{4n+2})&{\tilde{K}}^{-2}(X)&{\tilde{K}}^{-2}(S^{4m-1}\wedge Q_{n-m})&0, \quad (\ast)\\}; \path[->,font=\scriptsize]
  (m-1-2) edge (m-1-3)
  (m-1-3) edge (m-1-4)
  (m-1-4) edge (m-1-5)
  (m-1-1) edge node[above] {$(\Sigma\theta)^*$} (m-1-2)
  (m-1-2) edge node[left] {$c'=\sigma_k$}(m-2-2)
  (m-1-3) edge node[left] {$c'$}(m-2-3)
  (m-1-4) edge node[left] {$c'$}(m-2-4)
      (m-2-1) edge (m-2-2)
  (m-2-2) edge (m-2-3)
  (m-2-3) edge (m-2-4)
  (m-2-4) edge (m-2-5);
 \end{tikzpicture}
 \end{equation*}
where $\sigma_k=1$ when $n$ is even and $\sigma_k=2$ when $n$ is odd. Now, since by Lemma \ref{e2} the group $L_3$ is isomorphic to a finite cyclic group, the map $(\Sigma\theta)^*$ is trivial. Thus we can rewrite the upper row of the exact sequence in diagram $(\ast)$ as the following exact sequence
\begin{align*}
0\rightarrow {\widetilde{KSp}}^{-2}(S^{4n+2})\rightarrow {\widetilde{KSp}}^{-2}(X)\rightarrow {\widetilde{KSp}}^{-2}(S^{4m-1}\wedge Q_{n-m})\rightarrow 0.
\end{align*}
Therefore the upper and the down rows of the exact sequences in diagram $(\ast)$ splits. Thus by induction, we can conclude that ${\widetilde{KSp}}^{-2}(X)$ and ${\tilde{K}}^{-2}(X)$ are free abelian groups that are isomorphic to $\mathbb{Z}^{n-m+1}$ with their basis $\xi_1,\xi_2,\cdots ,\xi_{n-m+1}$ and $\xi'_1,\xi'_2,\cdots ,\xi'_{n-m+1}$, respectively, where $\xi_{n-m+1}\in {\widetilde{KSp}}^{-2}(S^{4n+2})$ and $\xi'_{n-m+1}\in {\tilde{K}}^{-2}(S^{4n+2})$. Now according to the definition of the maps $c'$, we can choose $\xi_1$, $\xi'_1$, $\xi_2$, $\xi'_2$, $\cdots$, $\xi_{n-m+1}$ and $\xi'_{n-m+1}$ such that if $m$ is even integer then we have
\begin{displaymath}
c'(\xi_1)=\xi'_1,\quad c'(\xi_2)=2\xi'_2,\quad \cdots, \quad c'(\xi_{n-m+1}) = \left\{ \begin{array}{ll}
\xi'_{n-m+1} & \textrm{if $n$ is even },\\
2\xi'_{n-m+1} & \textrm{if $n$ is odd },\\
\end{array} \right.
\end{displaymath}
and if $m$ is odd integer then we have
\begin{displaymath}
c'(\xi_1)=2\xi'_1,\quad c'(\xi_2)=\xi'_2,\quad \cdots, \quad c'(\xi_{n-m+1}) = \left\{ \begin{array}{ll}
\xi'_{n-m+1} & \textrm{if $n$ is even },\\
2\xi'_{n-m+1} & \textrm{if $n$ is odd }.
\end{array} \right.
\end{displaymath}
\end{proof}
Consider the map $c'\colon Sp(n-m+1)\rightarrow SU(2n-2m+2)$. By restriction of the map $c'$ to their quasi-projective spaces, we get to map ${\bar{c}}'\colon Q_{n-m+1}\rightarrow \Sigma\mathbb{C}P^{2n-2m+1}$. The cohomologies of $Q_{n-m+1}$ and $\Sigma\mathbb{C}P^{2n-2m+1}$ are given by
\begin{align*}
&H^* (Q_{n-m+1})=\mathbb{Z}\{{\bar{y}}_3,{\bar{y}}_7, \cdots ,{\bar{y}}_{4n-4m+3}\},\\
&H^* (\Sigma\mathbb{C}P^{2n-2m+1})=\mathbb{Z}\{{\bar{x}}_3,{\bar{x}}_5, {\bar{x}}_7,\cdots, {\bar{x}}_{4n-4m+3}\},
\end{align*}
such that ${\bar{c}}'({\bar{x}}_3)={\bar{y}}_3$, ${\bar{c}}'({\bar{x}}_5)=0$, $ {\bar{c}}'({\bar{x}}_7)={\bar{y}}_7$, $\cdots$, and ${\bar{c}}'({\bar{x}}_{4n-4m+3})={\bar{y}}_{4n-4m+3}$. We denote a generator of $\tilde{K}(S^{2n})\cong \mathbb{Z}$ by $\zeta_n$. Recall that $H^*(\mathbb{C}P^{2n-2m+1})=\mathbb{Z}[t]/(t^{2n-2m+2})$ and $K(\mathbb{C}P^{2n-2m+1})=\mathbb{Z}[x]/(x^{2n-2m+2})$, where $|t|=2$. Note that ${\tilde{K}}^{-2}(\Sigma^{4m}\mathbb{C}P^{2n-2m+1})\cong {\tilde{K}}^{0}(\Sigma^{4m+2}\mathbb{C}P^{2n-2m+1})$ is a free abelian group generated by $\zeta_{2m+1}\otimes x$, $\zeta_{2m+1}\otimes x^2$, $\cdots$, $\zeta_{2m+1}\otimes x^{2n-2m+1}$, with the following Chern characters
\begin{align*}
&ch_{2n+1}(\zeta_{2m}\otimes x)=ch_{2m}(\zeta_{2m})ch_{2n-2m+1}(x)=\frac{1}{(2n-2m+1)!}\sigma^{4m}t^{2n-2m+1},\\\\
&ch_{2n+1}(\zeta_{2m}\otimes x^2)=ch_{2m}(\zeta_{2m})ch_{2n-2m+1}(x^2)=A\sigma^{4m},\\\\
&ch_{2n+1}(\zeta_{2m}\otimes x^3)=ch_{2m}(\zeta_{2m})ch_{2n-2m+1}(x^3)=B\sigma^{4m},\\
\vdots\\
&ch_{2n+1}(\zeta_{2m}\otimes x^{2n-2m+1})=ch_{2m}(\zeta_{2m})ch_{2n-2m+1}(x^{2n-2m+1})=C\sigma^{4m},
\end{align*}
where $A$, $B$ and $C$ are equal to
$$A= ch_{2n-2m+1}(x^2)= \sum_{\substack {i_1+i_2=2n-2m+1,\\ 1\leq i_1\leq n-m}}ch_{i_1}x ch_{i_2}x =\sum_{k=1}^{n-m}\frac{1}{k!(2n-2m+1-k)!}t^{2n-2m+1},$$
\begin{align*}
B= ch_{2n-2m+1}(x^3)=&ch_1x\sum_{\substack {i_1+i_2=2n-2m,\\ 1\leq i_1\leq i_2}}ch_{i_1}x ch_{i_2}x+ch_2 x\sum_{\substack {i_1+i_2=2n-2m-1,\\ 2\leq i_1\leq i_2}}ch_{i_1}x ch_{i_2}x+\cdots \\
&+ ch_{n-m-1}x\sum_{\substack {i_1+i_2=n-m+2,\\ n-m-1\leq i_1\leq i_2}}ch_{i_1}x ch_{i_2}x  + \sum_{\substack {i_1+i_2=2n-2m+1, \\ i_1, i_2\geq 1}}ch_{i_1}x ch_{i_2}x^2,
\end{align*}
and
\begin{align*}
C= ch_{2n-2m+1}(x^{2n-2m+1})=&ch_1x\sum_{\substack {i_1+\cdots + i_{2n-2m}=2n-2m,\\ 1\leq i_1\leq i_2 \leq \cdots \leq i_{2n-2m}}} ch_{i_1}x^{i_1}\cdots ch_{i_{2n-2m}}x^{i_{2n-2m}} \\&+ch_2x^2\sum_{\substack {i_1+\cdots + i_{n-m-1}=2n-2m-1,\\ 2\leq i_1\leq i_2\leq \cdots \leq i_{n-m-1}}}ch_{i_1}x^{i_1} \cdots ch_{i_{n-m-1}}x^{i_{n-m-1}} \\
&+ch_3x^3 \sum_{\substack {i_1+\cdots +i_{n-m-2}=2n-2m-2,\\ 3\leq i_1\leq i_2\leq \cdots \leq i_{n-m-2}}}ch_{i_1}x^{i_1}\cdots  ch_{i_{n-m-2}}x^{i_{n-m-2}} +\cdots \\&+ ch_{n-m}x^{n-m}\sum_{\substack {i_1=n-m+1,\\ i_1\geq n-m}}ch_{i_1}x^{i_1}.
\end{align*}
Consider the map ${\bar{c}}'\colon {\tilde{K}}^{-2}(\Sigma^{4m}\mathbb{C}P^{2n-2m+1})\rightarrow {\tilde{K}}^{-2}(S^{4m-1}\wedge Q_{n-m+1})$, we can put $\xi'_1$, $\xi'_2$, $\cdots$ and  $\xi'_{n-m+1}$ such that
$\xi'_1={\bar{c}}'(\zeta_{2m}\otimes x)$, $\xi'_2={\bar{c}}'(\zeta_{2m}\otimes x^3)$, $\cdots$, and $\xi'_{n-m+1}={\bar{c}}'(\zeta_{2m}\otimes x^{2n-2m+1})$. We have the following proposition.
\begin{prop}\label{e4}
The image of $\psi$ is generated by
\begin{displaymath}
\left\{ \begin{array}{ll}
\frac{(2n+1)!}{(2n-2m+1)!} \sigma^{4m-1}\otimes \bar{y}_{4n-4m+3} &  \textrm{if $m$ is even },\\\\
\frac{2(2n+1)!}{(2n-2m+1)!} \sigma^{4m-1}\otimes \bar{y}_{4n-4m+3} &  \textrm{if $m$ is odd }.\\
\end{array} \right.
\end{displaymath}
\end{prop}
\begin{proof}
Consider the following commutative diagram
\begin{equation}
\begin{tikzpicture}[baseline=(current bounding box.center)]
\matrix(m)[matrix of math nodes,
 row sep=2em, column sep=2em,
 text height=1.5ex, text depth=0.20ex]
 {{\tilde{K}}^{-2}(\Sigma^{4m}\mathbb{C}P^{2n-2m+1})& H^{4n+2}(\Sigma^{4m}\mathbb{C}P^{2n-2m+1})\\ {\tilde{K}}^{-2}(S^{4m-1}\wedge Q_{n-m+1})& H^{4n+2}(S^{4m-1}\wedge Q_{n-m+1}),\\}; \path[->,font=\scriptsize]
  (m-1-1) edge node[above] {$\varphi'$}(m-1-2)
  (m-1-1) edge node[left] {${\bar{c}}'$} (m-2-1)
  (m-1-2) edge node[right] {$\cong$}(m-2-2)
      (m-2-1) edge node[above] {$\varphi$} (m-2-2);
 \end{tikzpicture}
\end{equation}
where the map $\varphi'$ is defined similarly to the map $\varphi$. By definition of the map $\varphi'$, we have
\begin{align*}
&\varphi'(\zeta_{2m}\otimes x)=(2n+1)!ch_{2n+1}(\zeta_{2m}\otimes x)=
\frac{(2n+1)!}{(2n-2m+1)!}\sigma^{4m}t^{2n-2m+1},\\\\
&\varphi'(\zeta_{2m}\otimes x^2)=(2n+1)!ch_{2n+1}(\zeta_{2m}\otimes x^2)=
A(2n+1)!\sigma^{4m},\\\\
&\varphi'(\zeta_{2m}\otimes x^3)=(2n+1)!ch_{2n+1}(\zeta_{2m}\otimes x^3)=
B(2n+1)!\sigma^{4m},\\
\vdots\\
&\varphi'(\zeta_{2m}\otimes x^{2n-2m+1})=(2n+1)!ch_{2n+1}(\zeta_{2m}\otimes x^{2n-2m+1})=
C(2n+1)!\sigma^{4m}.
\end{align*}
Therefore according to the commutativity of diagram $(5.3)$, we have
\begin{align*}
&\varphi(\xi'_1)=\varphi'(\zeta_{2m}\otimes x)=
\frac{(2n+1)!}{(2n-2m+1)!}\sigma^{4m-1}\otimes \bar{y}_{4n-4m+3},\\\\
&\varphi(\xi'_2)=\varphi'(\zeta_{2m}\otimes x^3)=B'(2n+1)!\sigma^{4m-1}\otimes \bar{y}_{4n-4m+3},\\
\vdots\\
&\varphi(\xi'_{n-m+1})=\varphi'(\zeta_{2m}\otimes x^{2n-2m+1})=C'(2n+1)!\sigma^{4m-1}\otimes \bar{y}_{4n-4m+3},
\end{align*}
where $B=B't^{2n-2m+1}$ and $C=C't^{2n-2m+1}$. Thus by the commutativity of diagram $(5.2)$, we get
\begin{displaymath}
\psi(\xi_1)= \varphi(c'(\xi_1))=\pm \left\{ \begin{array}{ll}
\frac{(2n+1)!}{(2n-2m+1)!} \sigma^{4m-1}\otimes \bar{y}_{4n-4m+3} & \textrm{if $m$ is even },\\\\
\frac{2(2n+1)!}{(2n-2m+1)!} \sigma^{4m-1}\otimes \bar{y}_{4n-4m+3} & \textrm{if $m$ is odd },\\
\end{array} \right.
\end{displaymath}
\begin{displaymath}
\psi(\xi_2)= \varphi(c'(\xi_2))=\pm \left\{ \begin{array}{ll}
2B'(2n+1)! \sigma^{4m-1}\otimes \bar{y}_{4n-4m+3} & \textrm{if $m$ is even },\\\\
B'(2n+1)! \sigma^{4m-1}\otimes \bar{y}_{4n-4m+3} & \textrm{if $m$ is odd },\\
\end{array} \right.
\end{displaymath}
$\qquad \vdots$
\begin{displaymath}
\psi(\xi_{n-m+1})= \varphi(c'(\xi_{n-m+1}))= \left\{ \begin{array}{ll}
-(2n+1)! C' \sigma^{4m-1}\otimes \bar{y}_{4n-4m+3} & \textrm{if $n$ is even },\\\\
 2(2n+1)! C' \sigma^{4m-1}\otimes \bar{y}_{4n-4m+3} & \textrm{if $n$ is odd }.\\
\end{array} \right.
\end{displaymath}
Since the coefficients $B',\cdots, C'$ are divisible by $\frac{1}{(2n-2m+1)!}$, the image of $\psi$ is included in the submodule generated by $\frac{(2n+1)!}{(2n-2m+1)!} \sigma^{4m-1}\otimes \bar{y}_{4n-4m+3}$ if $m$ is even and is included in the submodule generated by $\frac{2(2n+1)!}{(2n-2m+1)!} \sigma^{4m-1}\otimes \bar{y}_{4n-4m+3}$ if $m$ is odd. Thus we can conclude that
\begin{displaymath}
Im \psi\cong \left\{ \begin{array}{ll}
\mathbb{Z}\{\frac{(2n+1)!}{(2n-2m+1)!} \sigma^{4m-1}\otimes \bar{y}_{4n-4m+3}\} & \textrm{if $m$ is even },\\\\
\mathbb{Z}\{\frac{2(2n+1)!}{(2n-2m+1)!} \sigma^{4m-1}\otimes \bar{y}_{4n-4m+3}\} & \textrm{if $m$ is odd }.
\end{array} \right.
\end{displaymath}
\end{proof}
Therefore by Lemma \ref{e1} and Proposition \ref{e4}, we get the following theorem.
\begin{thm}\label{e5}
There is an isomorphism
\begin{displaymath}
[X, Sp(n)]\cong \left\{ \begin{array}{ll}
\mathbb{Z}_{\frac{(2n+1)!}{(2n-2m+1)!}} & \textrm{if $m$ is even },\\\\
\mathbb{Z}_{\frac{2(2n+1)!}{(2n-2m+1)!}} & \textrm{if $m$ is odd }. \quad \Box
\end{array} \right.
\end{displaymath}
\end{thm}
\textbf{Proof of Theorem \ref{a5}}\\
We recall that $X=S^{4m-1}\wedge Q_{n-m+1}$. In the following theorem that was proved in \cite{[N1]}, we obtain the commutators in the group $[X,Sp(n)]$.
\begin{thm}\label{e6}
Let $X$ be a $CW$-complex with dim $X \leq 4n+2$. For $\alpha,\beta\in Sp_n(X)$, the commutator $[\alpha, \beta]$ in $Sp_n(X)$ is given as
$$[\alpha, \beta]=\iota([\sum_{i+j=n+1}\alpha^*(y_{4i-1})\beta^*(y_{4j-1})]). \quad \Box$$
\end{thm}
Put $X'=S^{4m-1}\times Q_{n-m+1}$. Consider the following compositions
$$S^{4m-1}\times Q_{n-m+1} \overset{p_1}\longrightarrow S^{4m-1} \overset{\varepsilon_{m,n}}\longrightarrow Sp(n), $$
$$S^{4m-1}\times Q_{n-m+1}\overset{p_2}\longrightarrow Q_{n-m+1} \overset{\epsilon_{m,n}}\longrightarrow Sp(n), $$
where the maps $p_1$ and $p_2$ are the first and the second projections. Put $\varepsilon_{m,n} \circ p_1=\alpha$ and $\epsilon_{m,n} \circ p_2=\beta$. The commutator $[\alpha, \beta]\in [X', Sp(n)]$ is the following composition
$$X' \overset{\bar{\Delta}}\longrightarrow X'\wedge X'  \overset{\alpha \wedge \beta}\longrightarrow Sp(n)\wedge Sp(n)\overset{\gamma}\longrightarrow Sp(n),$$
where $\bar{\Delta}$ and $\gamma$ are the reduced diagonal map and the commutator map. Consider the following diagram
\begin{equation*}
\begin{tikzpicture}[baseline=(current bounding box.center)]
\matrix(m)[matrix of math nodes,
 row sep=2em, column sep=2em,
 text height=1.5ex, text depth=0.20ex]
 {X'& X'\times X'& X'& X& Sp(n)\wedge Sp(n)& Sp(n)\\ & X'\wedge X'\\}; \path[->,font=\scriptsize]
  (m-1-1) edge node[above] {$\Delta$}(m-1-2)
  (m-1-2) edge node[above] {$p_1\times p_2$} (m-1-3)
  (m-1-3) edge node[above] {$q$}(m-1-4)
      (m-1-4) edge node[above] {$\varepsilon_{m,n}\wedge\epsilon_{m,n}$} (m-1-5)
  (m-1-5) edge node[above] {$\gamma$} (m-1-6)
  (m-1-1) edge node[left] {$\bar{\Delta}$}(m-2-2)
  (m-2-2) edge node[right] {$\quad\alpha \wedge \beta$}(m-1-5);
 \end{tikzpicture}
\end{equation*}
where $\Delta$ is the diagonal map and $q:X'\rightarrow X$ is the quotient map. Since the composition
$$X'=S^{4m-1}\times Q_{n-m+1} \overset{\Delta}\longrightarrow X'\times X'  \overset{p_1\times p_2}\longrightarrow S^{4m-1}\times Q_{n-m+1}=X' $$
is $1$, the Samelson product $\langle \varepsilon_{m,n}, \epsilon_{m,n}\rangle = \gamma \circ (\varepsilon_{m,n}\wedge\epsilon_{m,n})$ is equal to the commutator $[\alpha,\beta]$ in the
group $[X, Sp(n)]$. So by Theorem \ref{e6}, we have
$$[\alpha,\beta ]=\iota ([{\varepsilon_{m,n}}^*(y_{4m-1})\otimes {\epsilon_{m,n}}^*(y_{4n-4m+3})])= \iota([\sigma^{4m-1}\otimes \bar{y}_{4n-4m+3}]).$$
Therefore by Theorem \ref{e5}, when $m$ is even then the order of the Samelson product $\langle \varepsilon_{m,n},\epsilon_{m,n} \rangle$ is equal to $\frac{(2n+1)!}{(2n-2m+1)!}$ and when $m$ is odd then the order of the Samelson product $\langle \varepsilon_{m,n},\epsilon_{m,n} \rangle$ is equal to $\frac{2(2n+1)!}{(2n-2m+1)!}$. $\quad \Box$\\


 School of Mathematics, Institute for Research in Fundamental Sciences (IPM), P.O. Box 19395-5746, Tehran, Iran\\
E-mail: sj.mohammadi@ipm.ir\\

\begin{thebibliography}{WWW99}

{\small\small

\bibitem{[B]} {} R. Bott, A note on the Samelson product in the classical groups, Commentarii Mathematici Helvetici. \textbf{34} (1960), 249-256.

\bibitem{[BK]} {} A. K. Bousfield and D. M. Kan, Homotopy limits, completions and localizations, Lecture Notes in Math. \textbf{304} (1972), Springer-Verlag, Berlin-NewYork.

\bibitem{[CN]} {} F.R. Cohen and J.A. Neisendorfer, A construction of $p$-local H-spaces, Algebraic topology, Aarhus 1982, Lecture Notes in Math., vol. 1051, pp. 351-359, Springer, Berlin, 1984.

\bibitem{[G]} {}  T. Ganea, Cogroups and suspensions, Invent. Math. \textbf{9} (1970), 185-197.

\bibitem{[H1]} {} H. Hamanaka, On Samelson products in $p$-localized unitary groups, Topology Appl. \textbf{154} (2017), 573-583.
\bibitem{[H2]} {} H. Hamanaka, On $[X, U(n)]$ when dim $X$ is $2n+1$, J. Math. Kyoto Univ. \textbf{44 (3)}
(2004), 655-667.

\bibitem{[H3]} {}  H. Hamanaka and A. Kono, On $[X, U(n)]$ when dim $X=2n$, J. Math. Kyoto Univ.
\textbf{43 (2)} (2003), 333-348.

\bibitem{[HKK]} {}   H. Hamanaka, S. Kaji  and  A. Kono, Samelson products in $Sp(2)$, Topology and its Applications. \textbf{155} (2008), 1207-1212.

\bibitem{[J]} {} I. M. James, Spaces associated with Stiefel manifolds, Proc. London Math. Soc. \textbf{9} (1959), 115-140.

\bibitem{[JW]} {}  I. James and J. Whitehead, The homotopy theory of Sphere-bundles over spheres I,
Proc. Lond. Math. Soc. \textbf{4} (1954), 196-218.

 \bibitem{[KK]} {}  D. Kishimoto and A. Kono, On the homotopy types of $Sp(n)$ gauge groups, Algebraic and Geometric Topology. \textbf{19} (2019), 491-502.

 \bibitem{[KKKT]} {} Y. Kamiyama, D. Kishimoto, A. Kono and S. Tsukuda, Samelson products of $SO(3)$ and
                 applications, Glasg. Math. J. \textbf{49} (2007), 405-409.

\bibitem{[KKT]} {}   D. Kishimoto, A. Kono and M. Tsutaya, On $p$-local homotopy types of gauge groups,
Proc. Roy. Soc. Edinburgh. A \textbf{144} (2014), 149-160.

\bibitem{[KT]} {}  A. Kono and S. Theriault, The order of the commutator on $SU(3)$ and an application
to gaug groups, Bull. Belg. Math. Soc. Simon Stevin. \textbf{20} (2013), 359-370.

\bibitem{[M]} {} C. A. McGibbon, Homotopy commutativity in localized groups, Amer. J. Math. \textbf{106}
(1984), 665-687.

\bibitem{[MNT]} {}  M. Mimura, G. Nishida and H. Toda, Mod $p$ decomposition of compact Lie groups,
Publ. Res. Inst. Math. Sci. \textbf{13} (1977), 627-680.

 \bibitem{[N]} {} T. Nagao, On Samelson products in $Sp(n)$, J. Math. Kyoto Univ. \textbf{49-1} (2009), 225-234.

 \bibitem{[N1]} {} T. Nagao, On the groups $[X,Sp(n)]$ with dim $ X\leq 4n+2$, Kyoto J. Math.
\textbf{48} (2008), 149-165.

 \bibitem{[O]} {} H. Oshima, Samelson products in the exceptional Lie group of rank $2$, J. Math. Kyoto Univ.
\textbf{45} (2005), 411-420.

\bibitem{[T]} {} S. Theriault, Odd primary homotopy types of $SU(n)$-gauge groups, Algebr.
Geom.Topol. \textbf{17} (2017), 1131-1150.

\bibitem{[T1]} {}  H. Toda, Composition Methods in Homotopy Groups of Spheres, Annals of Mathematics Studies 49, Princeton University Press, Princeton N.J., 1962.

}
\end{thebibliography}
\end{document}